\documentclass[11pt]{amsart}
\usepackage{amsmath,amsfonts,amssymb,mathrsfs}
\usepackage{amssymb,mathrsfs,graphicx,enumerate,mathabx}
\usepackage{amssymb,amscd,amsthm,bbm}
\usepackage{mathtools}
\mathtoolsset{showonlyrefs}
\usepackage[retainorgcmds]{IEEEtrantools}
\usepackage{colortbl}
\usepackage{lipsum}
\usepackage{graphicx, subfigure}
\usepackage{graphicx}
\usepackage{hyperref}
\mathtoolsset{showonlyrefs}

\title[]{Stability of equilibria of an aggregation-diffusion energy on sphere}

\author[Fetecau]{Razvan C. Fetecau}
\address[Razvan C. Fetecau]{\newline Department of Mathematics, Simon Fraser University, 8888 University Dr., Burnaby, BC V5A 1S6, Canada}
\email{van@math.sfu.ca}

\author[Park]{Hansol Park}
\address[Hansol Park]{\newline Department of Mathematics, National Tsing Hua University, Section 2, Kuang-Fu Road, Hsinchu 30013, Taiwan}
\email{hansolpark@math.nthu.edu.tw}

\newtheorem{theorem}{Theorem}[section]
\newtheorem{lemma}{Lemma}[section]

\newtheorem{proposition}{Proposition}[section]
\newtheorem{remark}{Remark}[section]

\newcommand{\bbr}{\mathbb R}

\newcommand{\bbs}{\mathbb S}

\newcommand{\calA}{\mathcal{A}}
\newcommand{\calC}{\mathcal{C}}
\newcommand{\calF}{\mathcal{F}}

\newcommand{\calP}{\mathcal{P}}

\def\d{\mathrm{d}}
\newcommand{\dS}{\mathrm{d}S} 
\newcommand{\dx}{\mathrm{d}S (x)}
\newcommand{\dy}{\mathrm{d}S (y)}
\newcommand{\dz}{\mathrm{d}S (z)}

\newcommand{\rhou}{\rho_{\mathrm{uni}}}

\newcommand{\supp}{\mathrm{supp}(\rho)}

\newcommand{\dm}{d} 

\begin{document}

\subjclass[2020]{35A15, 35B38, 58K05, 82D60}
\keywords{free energy, nonlinear diffusion, phase transitions, dipolar potential, polymer orientation}

\begin{abstract}
We consider an aggregation-diffusion energy on the sphere and investigate the stability of its equilibria. The energy consists of a porous-medium type nonlinear entropy $\frac{1}{m-1}\int \rho(x)^m\dx$ with $m>1$, together with an interaction energy modeled by a quadratic interaction potential. The energy generalizes the Onsager free energy with dipolar potential, which models polymer orientation. Our study complements the authors' previous work [Nonlinearity {\bf 39} (2026), 055011], where the ground states of the energy functional were investigated. In the current paper we extend the previous results on existence of equilibria for $m>2$ from $\dm =2$ to arbitrary dimension $\dm \geq 2$. This allows us to present the bifurcation structure (with respect to the interaction strength) of all equilibria in any dimension $\dm \geq 2$, for all $m>1$.  Furthermore, we provide a complete classification of the stability of all equilibria of the energy, by deriving a criterion for stability that can be checked explicitly. In particular, for $m>2$ we identify a saddle-node bifurcation of strictly supported equilibria, and a subcritical pitchfork bifurcation where the uniform distribution loses stability.
\end{abstract}

\maketitle 


\section{Introduction}
\label{sect:intro}
This paper is concerned with the stability of equilibria of the following free energy functional:
\begin{equation}
\label{energy-sphere}
E[\rho]=\frac{1}{m-1}\int_{\bbs^\dm}\rho(x)^m \dx+\frac{\kappa}{4}\iint_{\bbs^\dm\times \bbs^\dm}\|x-y\|^2\rho(x)\rho(y) \dx\dy,
\end{equation}
where $\bbs^\dm$ represents the $\dm$-dimensional unit sphere, $\|\cdot\|$ denotes the Euclidean distance in $\bbr^{\dm+1}$, and the integration is with respect to the surface area measure $\dS$ of the sphere. The energy functional is defined on the space $\mathcal{P}_{ac}(\bbs^\dm)$ of absolutely continuous (with respect to $\dS$) probability measures on $\bbs^\dm$. Also, $m>1$ represents the diffusion exponent, and $\kappa>0$ is the interaction strength. 

The free energy \eqref{energy-sphere} belongs to a large class of functionals called aggregation-diffusion energies. The first term in \eqref{energy-sphere}, which favours spreading, represents the entropy, while the second term is a nonlocal interaction energy that promotes aggregation. Consequently, the two terms have competing effects, and the equilibria and the global minimizers depend on the balance between them. Due to this interpretation, such energies and their associated gradient flow dynamics have a wide range of applications to modelling collective behaviour in sciences and engineering \cite{CaMcVi2006, HoPu2005, JiEgerstedt2007, KoSuUmBe2011, LeToBe2009, MotschTadmor2014}. Given the extensive literature on this topic, it would be impractical to present a complete account of this literature here, and we refer the reader to several review  articles \cite{BailoCarrilloCastro2024, CarrilloCraigYao2019, CarrilloVecil2010}, as well as to the foundational work from \cite{AGS2005, CaMcVi2006} on gradient flows associated to this type of energies. Among many related works, we also refer to \cite{BurgerDiFrancescoFranek, BuFeHu14, CaHiVoYa2019, CaHoMaVo2018, DelgadinoXukaiYao2022} for studies on qualitative properties of equilibria, as well as on their uniqueness or lack thereof.

The energy in \eqref{energy-sphere} corresponds specifically to nonlinear diffusion with exponent $m>1$ in the slow diffusion (porous media) regime, and nonlocal attraction modelled by the quadratic interaction potential $W:\bbs^\dm \times \bbs^\dm \to \bbr$ given by $W(x,y) = \frac{1}{2} \| x-y \|^2$. Nonlinear diffusion was considered in various applications of this class of models, such as swarming  \cite{BurgerDiFrancescoFranek, BuFeHu14, Kaib17, TBL}, granular media \cite{CaMcVi2006}, machine learning \cite{PeletierShalova2025}, and opinion formation \cite{FagioliRadici2021}. Also, interaction potentials in quadratic (and more generally, in power-law) form have been considered extensively in studies on interaction energies, with or without diffusion \cite{Balague_etalARMA, BertozziCarilloLaurent,BertozziLaurent,CarrilloChipotHuang2014,CaHoMaVo2018,ChFeTo2015,FeHu13,HoPu2005,LeToBe2009}.  Note however that the vast majority of these references consider the model set up on the Euclidean space $\bbr^\dm$. Work on aggregation-diffusion energies and their related evolution equations on general Riemannian manifolds is more recent and much more limited \cite{CaFePa2025a, FeHaPa2021, FePa2023a, FePa2024b, FePa2024a, HaHwKiKiMi, ha2022emergent, PeletierShalova2025, WuSlepcev2015}. 

A particularly important application of the free energy \eqref{energy-sphere} is to rod-like polymers \cite{ConstantinKevrekidisTiti2004, fatkullin2005critical}, where $\rho$ represents the probability distribution function for the orientation of a polymer interpreted as a rigid rod of unit vector $x \in \bbs^2$. The special case $m=1$ of \eqref{energy-sphere} (in this case the entropic term is given by $\int_{\bbs^\dm} \rho(x) \log \rho(x) \dx$) corresponds to the celebrated Onsager free energy  \cite{Onsager1949}, used to study isotropic-to-nematic phase transitions in systems of hard rods. For this  specific application, the quadratic interaction potential, or its equivalent expression $- x \cdot y $, is called the dipolar potential. As the interaction strength $\kappa$ increases through a critical value, the uniform distribution (the isotropic state) loses stability to a bell-shaped distribution (the nematic state) \cite{DegondFrouvelleLiu2014, fatkullin2005critical, FrouvelleLiu2012}. The time evolution equation (i.e., the gradient flow of the Onsager free energy) was studied in \cite{FrouvelleLiu2012}, where explicit rates of convergence of the solutions to global energy minimizers were derived and investigated.

Also in applications to rod-shaped polymers, as the molecular geometry becomes more complex and exhibits additional symmetries, the interaction potential can also be modified accordingly. A classical example is the Maier--Saupe model, where the interaction potential is given by $W_{\mathrm{MS}}(x,y)=-(x\cdot y)^2$. Unlike the dipolar interaction, this potential possesses head-to-tail symmetry, which comes with a very different set of equilibria for the corresponding Onsager energy. The mathematical analysis of the free energy and its associated gradient flow with the Maier--Saupe interaction, including the classification of equilibria, stability, and bifurcation phenomena, has been carried out for example in \cite{ball2021axisymmetry, fatkullin2005critical, liu2005axial, zhou2007characterization}.
 
The extension of the Onsager free energy to nonlinear diffusion was considered recently in \cite{FePaVa2025} for the slow diffusion regime ($m>1$) and in \cite{FePa2026-fast} for fast diffusion ($0<m<1$). In both of the papers \cite{FePaVa2025} and \cite{FePa2026-fast}, the focus is to find and characterize the global minimizers of the energy functional \eqref{energy-sphere}. The aim of the present work is to complement the findings from \cite{FePaVa2025}, where a very interesting bifurcation/transition structure of energy equilibria has been identified -- but not investigated, in terms of the size of the interaction strength $\kappa$. In the current research we provide a complete characterization of the bifurcations in energy equilibria that occur with varying the parameter $\kappa$. In addition, we make an important extension to the results in \cite{FePaVa2025}, by generalizing them from $\dm =2$ to arbitrary dimension $\dm\geq 2$ for the range $m>2$ of the diffusion exponent (which is the most interesting in terms of the bifurcation structure).

One of the key novelties of the slow diffusion regime $m>1$, compared to the linear diffusion, is the existence and emergence of equilibria supported on a strict subset of the sphere.  For all $m>1$, at a critical value of $\kappa$ (denoted by $\kappa_1$ in the paper), the uniform distribution loses stability. We show that a distinct family of fully supported equilibria forms at $\kappa=\kappa_1$, via a supercritical pitchfork bifurcation (for $1<m<2$) or a subcritical pitchfork bifurcation (for $m>2)$. In addition, at a second critical value $\kappa=\kappa_2$, these equilibria transition from being fully supported to having strict support on the sphere. Finally, in the case $m>2$ we show that a saddle-node bifurcation of strictly supported equilibria occurs at a third critical value $\kappa=\kappa_3$.

To establish the stability of the equilibria we investigate the sign of the second variation of the energy, which in turn reduces to a constrained minimization problem -- see the functional $\calF[\psi]$ and the admissible set $\calA$ introduced in Section \ref{sect:2nd-order}. Minimizing this functional, which is a delicate issue that required considerable effort, is an interesting problem in itself. The difficulty comes from the fact that for strictly supported equilibria, the critical points of this functional, and hence the possible minimizers, do not belong to the admissible set. To deal with this issue, we first investigate the minimization problem on a larger set, and then connect the two minimization problems, to derive an explicit criterion for the stability of the equilibria.

In closing, we mention a very recent line of research related to our paper, in the area of machine learning. Using the same setup on the sphere, but a different interaction potential, free energies appear in applications to large language models, specifically to noisy transformers \cite{Geshkovski_etal2025}. Similar to our work, phase transitions have been studied for the equilibria of such energies, where in particular, the uniform distribution loses stability and undergoes a bifurcation at a certain critical noise strength \cite{Balasubramanian_etal2025, Gerber_etal2025, mun2026phase2, mun2026phase1, ShalovaSchlichting2025}. This is a very active research area at the moment, which is developing very fast.

The remaining part of the paper is organized as follows. In Section \ref{sect:prelims}, we present known results on the equilibria of the energy functional. These results were established in \cite{FePaVa2025}, where the global minimizers of the energy functional were investigated. In Section \ref{sect:existence-mg2}, we consider the case $m>2$ in an arbitrary dimension $\dm$, which was not addressed in \cite{FePaVa2025} due to a technical issue. The main result of this section is Theorem \ref{thm:equigenerald}, which extends to arbitrary dimension $\dm \geq 2$ the existence of equilibria established in \cite{FePaVa2025} for $\dm=2$. In Section \ref{sect:first-order}, we present a stability analysis based on the first variation of the energy. In Sections \ref{sect:2nd-order} and \ref{sect:2ndorder-fscs}, we analyze the stability of equilibria using the second variation of the energy. In Section \ref{sect:2nd-order} we study the minimization of a certain functional, which helps us establish the sign of the second variation. The main result in this section is Theorem \ref{thm:minF}, which shows how the stability of an equilibrium is determined by the minimization of this functional. Finally, in Section \ref{sect:2ndorder-fscs}, we apply this theorem and make explicit calculations to provide a complete classification of the stability of equilibria (Theorems \ref{thm:mg2:fs-stab} and \ref{thm:mg2:cs-stab}). Some supporting calculations are presented in the Appendix.


\section{Preliminaries and background}
\label{sect:prelims}
In this section, we introduce the equilibria and present some existing results from \cite{FePaVa2025}. Since for $x,y \in \bbs^\dm$, we have $\| x-y \|^2 = 2 - 2 \, x\cdot y$, the energy \eqref{energy-sphere} can also be written as
\begin{equation}
\label{eqn:energy-s}
\begin{aligned}
E[\rho]&=\frac{1}{m-1}\int_{\bbs^\dm}\rho(x)^m\dx-\frac{\kappa}{2}\iint_{\bbs^\dm\times\bbs^\dm} (x \cdot y) \rho(x)\rho(y)\dx \dy+\frac{\kappa}{2} \\
&=\frac{1}{m-1}\int_{\bbs^\dm}\rho(x)^m\dx - \frac{\kappa}{2} \| c_\rho\|^2 +\frac{\kappa}{2},
\end{aligned}
\end{equation}
where $c_\rho \in \bbr^{\dm+1}$ denotes the centre of mass of $\rho \in \calP_{ac}(\bbs^\dm)$:
\begin{equation}
\label{eqn:CM}
c_\rho = \int_{\bbs^\dm} x\rho(x) \dx.
\end{equation}

Note that since $\rho$ has unit mass, its centre of mass coincides with the first moment. For general density functions $f$ on $\bbs^\dm$, we will use the notation
\begin{equation}
\label{eqn:CM-general}
c_f = \frac{\int_{\bbs^\dm} x f(x) \dx}{\int_{\bbs^\dm} f(x) \dx}.
\end{equation}


\subsection{Critical points of the energy} 
\label{subsect:cp} 
We present here the critical points of the energy functional, as derived and discussed in \cite{FePaVa2025}. Take $\rho \in \calP_{ac}(\bbs^\dm)$ and a family of densities  $\rho^\epsilon \in \calP_{ac}(\bbs^\dm)$
of the form
\[
\rho^\epsilon = \rho + \epsilon \delta \rho.
\]
Note that in particular, $\int_{\bbs^\dm} \delta \rho(x) \dx =0$, so that $\rho^\epsilon$ are probability densities. Then, by \eqref{eqn:energy-s}, we compute
\begin{align*}
  \frac{\d}{\d \epsilon} E[\rho^\epsilon] &= 
  \frac{m}{m-1} \int_{\bbs^\dm} \rho^\epsilon(x)^{m-1} \delta \rho(x) \dx -\kappa \,  c_{\rho^\epsilon} \cdot
  \int_{\bbs^\dm} x \delta \rho(x) \dx.
\end{align*}

At a critical point, we have $\left. \frac{\d}{\d \epsilon} E[\rho^\epsilon] \right|_{\epsilon=0} = 0$, and hence,
\[
\int_{\bbs^\dm}\left(\frac{m}{m-1} \rho(x)^{m-1}-\kappa  c_\rho \cdot x \right)  \delta \rho(x) \dx =0,
\]
for all $\delta \rho$ of zero mass. Furthermore, $\rho \in \calP_{ac}(\bbs^\dm)$ is a critical point of the energy (alternatively, an equilibrium density) if the expression
\begin{equation}
\label{eqn:fvE}
\frac{\delta E}{\delta \rho}= \frac{m}{m-1} \rho(x)^{m-1}-\kappa \, c_\rho \cdot x
\end{equation}
is constant on each closed connected component of the support of $\rho$ \cite{CaDePa2019, Kaib17}. The value of the constant can be different in different components of the support. 

Several lemmas, presented and proved in Section \ref{sect:first-order}, restrict significantly the set of equilibria that can be stable. First, by Lemma \ref{lem:constant-lambda}, for an equilibrium $\rho$ to be stable, the expression $\frac{\delta E}{\delta \rho}$ from \eqref{eqn:fvE} must necessarily take the {\em same} constant value everywhere in $\mathrm{supp}(\rho)$, i.e., $\rho$ satisfies
\begin{equation}
\label{eqn:EL}
\frac{m}{m-1} \rho(x)^{m-1}-\kappa \, c_\rho \cdot x = \lambda, \qquad \forall x\in \mathrm{supp}(\rho),
\end{equation} 
for a constant $\lambda$. As shown in Lemma \ref{lem:constant-lambda}, if different components of $\supp$ correspond to different values of $\lambda$, then the energy can be decreased by transporting mass between the components. 

Second, by Lemma \ref{lem:supportcondi}, a stable equilibrium $\rho$ must necessarily have {\em connected} support, given by
\begin{equation}
\label{eqn:support}
\mathrm{supp}(\rho)=\{x:\kappa\, c_\rho \cdot x+\lambda\geq0\},
\end{equation}
meaning that $\rho$ is fully supported in the geodesic disk of $\bbs^\dm$ given by $\kappa \, c_\rho \cdot x \geq -\lambda$. We defer the presentation of Lemmas \ref{lem:constant-lambda} and \ref{lem:supportcondi} to Section \ref{sect:first-order}, where we investigate the stability of equilibria by using the first variation of the energy only. For the purpose of this section, we assume the results of the two lemmas, and note that the equilibria that are of interest for the current work satisfy \eqref{eqn:EL} and \eqref{eqn:support}.

The simplest equilibrium that satisfies \eqref{eqn:EL} and \eqref{eqn:support} is the uniform distribution on the sphere,
\begin{equation}
\label{eqn:rho-uni}
\rhou(x) = \frac{1}{|\bbs^\dm|}, \qquad \forall x \in \bbs^\dm,
\end{equation}
where $|\bbs^\dm|$ denotes the area of $\bbs^\dm$. Indeed, $\rhou$ is a solution of \eqref{eqn:EL} for all $\kappa>0$, where $c_{\rhou} =0$, and the constant $\lambda$ is given by
\begin{equation}
\label{eqn:lambda-uni}
\lambda_{\text{uni}} = \left(\frac{m}{m-1}\right) \frac{1}{|\bbs^\dm|^{m-1}}.
\end{equation}

By \eqref{eqn:EL}, we write the equilibria as
\begin{equation}
\label{eqn:equil}
\rho(x)=\begin{cases}
\left(\frac{m-1}{m} \right)^{\frac{1}{m-1}} \left(\lambda+\kappa\, c_\rho \cdot x\right)^{\frac{1}{m-1}},\quad &\text{for }x\in \mathrm{supp}(\rho),\\[5pt]
0,&\text{otherwise}.
\end{cases}
\end{equation}
Note that by its explicit expression, $\rho$ is axially symmetric with respect to the direction of its centre of mass. Without loss of generality, set 
\begin{equation}
\label{eqn:x0}
c_\rho=\|c_\rho\| x_0,
\end{equation}
for some fixed (but arbitrary) unit vector $x_0 \in \bbs^\dm$. Also define $\theta_x=\arccos ( x_0 \cdot x) \in [0,\pi]$, to write $\, c_\rho \cdot x=\|c_\rho\|\cos\theta_x$. The condition $\lambda+\kappa\, c_\rho \cdot x\geq0$ on $\mathrm{supp}(\rho)$ can then be simplified into
\begin{equation}
\label{ineq:cos}
-\frac{\lambda}{\kappa\|c_\rho\|}\leq\cos\theta_x, \qquad \forall x \in \mathrm{supp}(\rho).
\end{equation}

Based on the relative size of $\lambda$, two types of equilibria in the form \eqref{eqn:equil} were identified in \cite{FePaVa2025}:
\smallskip

{\em a) Equilibria fully supported on $\bbs^\dm$ ($\lambda \geq  \kappa \|c_\rho\|)$.} As in this case $-\frac{\lambda}{\kappa\|c_\rho\|}\leq -1$, $\theta_x$ can take any value in $[0,\pi]$. Hence, $\supp = \bbs^\dm$, and the equilibrium from \eqref{eqn:equil} is given by
\begin{equation}
\label{eqn:equil-fs}
\rho(x)=\left(\frac{m-1}{m}\right)^{\frac{1}{m-1}}\left(\lambda+\kappa\|c_\rho\|\cos\theta_x\right)^{\frac{1}{m-1}},\qquad\forall x\in \bbs^\dm.
\end{equation}

For consistency, $\rho$ given by \eqref{eqn:equil-fs} has to have centre of mass at $c_\rho$. Using the definition of $c_\rho$ and hyperspherical coordinates, we write
\begin{equation}
\label{eqn:cm-sq}
\begin{aligned}
\|c_\rho\|^2&= c_\rho \cdot\int_{\bbs^\dm}x\rho(x)\dS(x)\\
&= \int_{\bbs^{\dm}}\, c_\rho \cdot x \rho(x)\dS(x)\\
&=\dm w_{\dm}\|c_\rho\|\int_0^\pi \cos\theta \left(\frac{m-1}{m}\right)^{\frac{1}{m-1}}\left(\lambda+\kappa\|c_\rho\|\cos\theta\right)^{\frac{1}{m-1}}\sin^{\dm-1}\theta\d\theta,
\end{aligned}
\end{equation}
where for the last equal sign we also used that $|\bbs^{\dm-1}|=\dm w_{\dm}$, with $w_{\dm}$ denoting the volume of the $\dm$-dimensional unit ball.

Also, $\rho$ has unit mass. Using again hyperspherical coordinates, together with the calculation above, we arrive  at the following two equations: 
\begin{equation}
\label{eqn:system-fs-mg1}
\begin{aligned}
1&=\dm w_\dm\int_0^\pi \left(\frac{m-1}{m}\right)^{\frac{1}{m-1}}\left(\lambda+\kappa\|c_\rho\|\cos\theta\right)^{\frac{1}{m-1}}\sin^{\dm-1}\theta \, \d\theta, \\
\|c_\rho\|& =\dm w_\dm\int_0^\pi \left(\frac{m-1}{m}\right)^{\frac{1}{m-1}}\left(\lambda+\kappa\|c_\rho\|\cos\theta\right)^{\frac{1}{m-1}}\sin^{\dm-1}\theta \cos\theta \, \d\theta.
\end{aligned}
\end{equation}
The system of equations \eqref{eqn:system-fs-mg1} needs to be solved for $\lambda$ and $\|c_\rho\|$, which in turn determine $\rho$.

\medskip

{\em b) Equilibria supported on a strict subset of $\bbs^\dm$ ($-\kappa \|c_\rho\| < \lambda < \kappa \|c_\rho\|$).} In this case we have $-1<-\frac{\lambda}{\kappa\|c_\rho\|}<1$, and by \eqref{ineq:cos} we have $\theta_x \in \left[0,\arccos\left(-\frac{\lambda}{\kappa\|c_\rho\|}\right)\right]$. The equilibrium $\rho$ from \eqref{eqn:equil} is given by
\begin{equation}
\label{eqn:equil-cs}
\rho(x)=\begin{cases}
\left(\frac{m-1}{m}\right)^{\frac{1}{m-1}}\left(\lambda+\kappa\|c_\rho\|\cos\theta_x\right)^{\frac{1}{m-1}},\qquad&\text{ if }0\leq\theta_x\leq \arccos\left(-\frac{\lambda}{\kappa\|c_\rho\|}\right),\\[5pt]
0,\qquad&\text{ otherwise}.
\end{cases}
\end{equation}

Similarly to case a), $\lambda$ and $\|c_\rho\|$ can be found by requiring that $\rho$ has unit mass and centre of mass at $c_\rho$. Denote by 
\begin{equation}
\label{eqn:phi-notation}
    \phi = \arccos\left(-\frac{\lambda}{\kappa\|c_\rho\|}\right),
\end{equation}
and follow a similar process as in case a), to get
\begin{equation}
\label{eqn:system-cs-mg1}
\begin{aligned}
1&=\dm w_\dm\int_0^\phi \left(\frac{m-1}{m}\right)^{\frac{1}{m-1}}\left(\lambda+\kappa\|c_\rho\|\cos\theta\right)^{\frac{1}{m-1}}\sin^{\dm-1}\theta \, \d\theta, \\
\|c_\rho\|& =\dm w_\dm\int_0^\phi \left(\frac{m-1}{m}\right)^{\frac{1}{m-1}}\left(\lambda+\kappa\|c_\rho\|\cos\theta\right)^{\frac{1}{m-1}}\sin^{\dm-1}\theta \cos\theta \, \d\theta.
\end{aligned}
\end{equation}
System \eqref{eqn:system-cs-mg1} has to be solved for $\lambda$ and $\|c_\rho\|$.

Finally, we note that \eqref{ineq:cos} has no solutions if $\lambda < -\kappa\|c_\rho\|$. Also, for $\lambda=-\kappa\|c_\rho\|$, the only solution to \eqref{ineq:cos} is $\theta_x=0$; however this corresponds to a Dirac mass concentrated at one point, which is not an admissible density. 

\subsection{Background: existing results on equilibria of the energy}
\label{subsect:equil-review}

The uniform distribution \eqref{eqn:rho-uni} is a critical point of the energy for any $\kappa>0$. This equilibrium is very important in applications of the model to polymer orientation, as it corresponds to the isotropic state. The stability of the uniform distribution has been established in \cite{FePaVa2025}, we simply list the result here.

\begin{proposition} \cite[Proposition 3.1]{FePaVa2025}
\label{prop:stab-unif}
The uniform distribution $\rhou$ is a stable critical point of the energy if $\kappa \leq \kappa_1$, where
\begin{equation}
\label{eqn:kappa1}
\kappa_1:= \frac{m (\dm+1)}{|\bbs^\dm|^{m-1}},
\end{equation}
and unstable if $\kappa>\kappa_1$.
\end{proposition}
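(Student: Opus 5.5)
Since $\rhou$ lies in the interior of $\calP_{ac}(\bbs^\dm)$ (it is bounded away from zero), every zero-mass perturbation $\delta\rho$ that is, say, bounded gives an admissible family $\rho^\epsilon=\rhou+\epsilon\delta\rho$ for $|\epsilon|$ small. I will therefore work with the second variation of the energy in the form \eqref{eqn:energy-s}. A direct expansion gives
\[
E[\rhou+\epsilon\delta\rho]=E[\rhou]+\frac{\epsilon^2}{2}\left(m\,|\bbs^\dm|^{2-m}\int_{\bbs^\dm}\delta\rho^2\,\dx-\kappa\,\|c_{\delta\rho}'\|^2\right)+O(\epsilon^3),
\]
where $c'_{\delta\rho}=\int x\,\delta\rho\,\dx$. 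Here I use two facts: $c_{\rhou}=0$, so the interaction term is exactly $-\frac{\kappa}{2}\epsilon^2\|c'_{\delta\rho}\|^2$, and the first variation vanishes because $\delta\rho$ has zero mass. Stability thus reduces to the sign of the quadratic form $Q[\delta\rho]=m|\bbs^\dm|^{2-m}\|\delta\rho\|_{L^2}^2-\kappa\|\int x\,\delta\rho\|^2$ on zero-mass perturbations.

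The key step is a sharp inequality bounding $\|\int x\,\delta\rho\|^2$ by $\|\delta\rho\|_{L^2}^2$. I decompose $\delta\rho$ in spherical harmonics. The zero-mass condition removes the constant mode, and $\int x\,\delta\rho$ only sees the degree-one component, spanned by the coordinate functions $x_i$. Using $\int_{\bbs^\dm} x_ix_j\,\dx=\delta_{ij}|\bbs^\dm|/(\dm+1)$, write the degree-one part as $a\cdot x\,(\dm+1)/|\bbs^\dm|$ with $a=\int x\,\delta\rho$. Then
\[
\|\delta\rho\|_{L^2}^2\ \ge\ \frac{\dm+1}{|\bbs^\dm|}\,\|a\|^2,
\]
with equality exactly when $\delta\rho$ is purely degree one. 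It follows that $Q[\delta\rho]\ge \|a\|^2\left(m(\dm+1)|\bbs^\dm|^{1-m}-\kappa\right)$, and the bracket equals $\kappa_1-\kappa$.

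For $\kappa<\kappa_1$ this gives $Q\ge0$, with equality only when $a=0$ and the higher modes vanish, i.e.\ $\delta\rho=0$, so $Q$ is positive definite. For $\kappa>\kappa_1$, I take $\delta\rho=v\cdot x$ with $v\neq0$. This perturbation is bounded and has zero mass, and it gives $Q<0$, so $\rhou$ is unstable.

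The borderline case $\kappa=\kappa_1$ is the main obstacle, because $Q\ge0$ degenerates along the degree-one directions $\delta\rho=v\cdot x$. Either the paper's notion of stability is the second-variation criterion $Q\ge0$, in which case the case is immediate, or a higher-order check is needed. If a higher-order argument is needed, I will compute the energy along $\rhou+\epsilon\, v\cdot x$ to fourth order. The interaction part is exactly quadratic. The entropy part gives a cubic term proportional to $\int (v\cdot x)^3=0$ by oddness, and a quartic term proportional to $m(m-1)(m-2)\rhou^{m-4}\int (v\cdot x)^4$. I expect the sign of this quartic term to be handled, if needed, by optimizing over nearby non-degree-one corrections.
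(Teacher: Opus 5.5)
Your argument for $\kappa\neq\kappa_1$ is correct and is the natural second-variation proof. The bound $\|\int x\,\delta\rho\,\dx\|^2\le\frac{|\bbs^\dm|}{\dm+1}\int\delta\rho^2\dx$ carries the whole argument; it is Bessel's inequality for the degree-one harmonics, or just Cauchy--Schwarz against $e\cdot x$ with $e=a/\|a\|$. It also agrees with the paper's own criterion. Specializing Proposition~\ref{prop:calF-x0x1-ABC} and Theorem~\ref{thm:minF} to $\rhou$ (any unit vector serving as $x_0$) gives $A=|\bbs^\dm|^{m-1}$, $B=0$, $C=A/(\dm+1)$. Hence $\calF[\psi_{x_0}]=\calF[\psi_{x_1}]=(\dm+1)|\bbs^\dm|^{1-m}=\kappa_1/m$.

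The gap is the borderline case $\kappa=\kappa_1$: you leave two options open, and your fallback would fail. For all $m>1$, the statement can hold there only in the weak sense that the second variation \eqref{eqn:d2Edeps-0} is nonnegative. You should state exactly that; it follows at once from your inequality. Note that Theorem~\ref{thm:minF} likewise gives no conclusion in the equality case for a fully supported equilibrium. No higher-order argument can do better for $m>2$.

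First, your straight-line quartic term is misstated. It is $\epsilon^4\,\frac{m(m-2)(m-3)}{24}|\bbs^\dm|^{4-m}\int(v\cdot x)^4\dx$, which is already negative for $2<m<3$.

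Second, and more important, adding a correction $\epsilon^2 g$ (with $g$ of zero mass and no degree-one part) can only lower the $\epsilon^4$ coefficient, since $g=0$ is allowed. So it cannot ``handle'' the sign in your favour. With $f=v\cdot x$ and $\kappa=\kappa_1$, the $\epsilon^2$ and $\epsilon^3$ terms of $E[\rhou+\epsilon f+\epsilon^2 g]-E[\rhou]$ vanish, and the $\epsilon^4$ coefficient is
\[
\frac{m}{2}|\bbs^\dm|^{2-m}\int g^2\dx+\frac{m(m-2)}{2}|\bbs^\dm|^{3-m}\int f^2 g\dx+\frac{m(m-2)(m-3)}{24}|\bbs^\dm|^{4-m}\int f^4\dx .
\]
This is minimized at $g=-\frac{m-2}{2}|\bbs^\dm|\bigl(f^2-\frac{1}{|\bbs^\dm|}\int f^2\dx\bigr)$. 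Using $\int f^2\dx=\frac{|\bbs^\dm|}{\dm+1}\|v\|^2$ and $\int f^4\dx=\frac{3|\bbs^\dm|}{(\dm+1)(\dm+3)}\|v\|^4$, the minimum value is
\[
-\frac{m(m-2)\bigl((m-1)(\dm-1)+2\bigr)}{8(\dm+1)^2(\dm+3)}\,|\bbs^\dm|^{5-m}\,\|v\|^4 .
\]
This is negative for $m>2$ (and positive for $1<m<2$). Since $\rhou>0$ and $f,g$ are bounded, this curve is admissible for small $\epsilon$. So for $m>2$ the uniform distribution is not a local minimizer at $\kappa=\kappa_1$. This is exactly what the subcritical pitchfork described in the remark after Theorem~\ref{thm:mg2:cs-stab} predicts. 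The borderline claim must therefore be proved, and read, as nonnegativity of the second variation only.
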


For the equilibria \eqref{eqn:equil-fs} and \eqref{eqn:equil-cs} we distinguish two ranges of $m$: (i) $1<m<2$, and (ii) $m>2$. The stability of equilibria in case (i) was essentially established in \cite{FePaVa2025}, but we include it here for completeness.

\subsubsection{\underline{Case (i) $1<m<2$}}
\label{subsubsect:prelim:ml2}
In this case, a fully supported equilibrium in the form \eqref{eqn:equil-fs} bifurcates from the uniform distribution at $\kappa=\kappa_1$.  Moreover, there exists a second critical value $\kappa_2>\kappa_1$ where the fully supported equilibrium transitions to a strictly supported equilibrium in the form \eqref{eqn:equil-cs}. The value of $\kappa_2$ can be computed explicitly in terms of $m$ and $\dm$ \cite{FePaVa2025} -- see \eqref{eqn:kappa2}. The precise results are given by the following proposition. 

\begin{proposition} \cite[Proposition 4.1 \& Theorem 4.1]{FePaVa2025}
\label{prop:bif-mg1}
For any $1<m<2$ and $\dm\geq 2$, there exist two critical values $\kappa_2>\kappa_1$ of $\kappa$ such that:

\noindent i) At $\kappa=\kappa_1$, a fully supported equilibrium in the form \eqref{eqn:equil-fs} bifurcates from the uniform distribution $\rhou$. For any $\kappa \in (\kappa_1,\kappa_2)$, there exists a unique equilibrium in the form \eqref{eqn:equil-fs}, given by
\begin{equation}
\label{eqn:rhok-fs}
\rho_\kappa(x)=\left(\frac{m-1}{m}\right)^{\frac{1}{m-1}}\left(\lambda_\kappa+\kappa \|c_{\rho_\kappa}\| \cos\theta_x\right)^{\frac{1}{m-1}},\qquad\forall x\in \bbs^\dm,
\end{equation}
where $\|c_{\rho_\kappa}\|$ and $\lambda_\kappa$ are uniquely determined by $\kappa$ from  \eqref{eqn:system-fs-mg1}. 
\smallskip

\noindent ii) At $\kappa=\kappa_2$, the fully supported equilibria from part i) turn into equilibria with support strictly contained in $\bbs^\dm$.  For any $\kappa \in (\kappa_2,\infty)$, there exists a unique equilibrium in the form \eqref{eqn:equil-cs}, given by
\begin{equation}
\label{eqn:rhok-cs}
\rho_\kappa(x)=\begin{cases}
\left(\frac{m-1}{m}\right)^{\frac{1}{m-1}}\left(\lambda_\kappa+\kappa \|c_{\rho_\kappa}\| \cos\theta_x\right)^{\frac{1}{m-1}},\qquad&\text{ if }0\leq\theta_x\leq \phi_\kappa, \\[5pt]
0,\qquad&\text{ otherwise},
\end{cases} 
\end{equation}
with $\phi_\kappa$, $\|c_{\rho_\kappa}\|$ and $\lambda_\kappa$ uniquely determined by $\kappa$ -- see \eqref{eqn:phi-notation} and \eqref{eqn:system-cs-mg1}.

Also, the global energy minimizer is $\rhou$ (for $0<\kappa<\kappa_1$), $\rho_\kappa$ given by \eqref{eqn:rhok-fs} (for $\kappa_1<\kappa<\kappa_2$), or $\rho_\kappa$ given by \eqref{eqn:rhok-cs} (for $\kappa>\kappa_2$).
\end{proposition}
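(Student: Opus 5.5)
We reduce the self-consistency systems \eqref{eqn:system-fs-mg1} and \eqref{eqn:system-cs-mg1} to a single scalar equation in one variable, and then show that this equation has a unique solution on each range of $\kappa$.

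\emph{Fully supported branch.} Set $a = \kappa\|c_\rho\|/\lambda \in (0,1]$, and write $p=\frac{1}{m-1}>1$ (since $1<m<2$). The ratio of the two equations in \eqref{eqn:system-fs-mg1} gives
\[
\frac{\|c_\rho\|}{1} = G(a) := \frac{\int_0^\pi (1+a\cos\theta)^{p}\sin^{\dm-1}\theta\cos\theta\,\d\theta}{\int_0^\pi (1+a\cos\theta)^{p}\sin^{\dm-1}\theta\,\d\theta}.
\]
The mass equation determines $\lambda$ as an explicit function of $a$, namely $\lambda = \lambda(a) = \bigl(\frac{m}{m-1}\bigr) \bigl(\dm w_\dm \int_0^\pi (1+a\cos\theta)^p \sin^{\dm-1}\theta\,\d\theta\bigr)^{-(m-1)}$. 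Substituting, we obtain
\[
\kappa = K(a) := \frac{a\,\lambda(a)}{G(a)}.
\]
Expanding at $a=0$ gives $G(a) \approx \frac{p\,a}{\dm+1}$, since $\int \cos^2\theta \sin^{\dm-1}\theta \,\big/ \int \sin^{\dm-1}\theta = \frac{1}{\dm+1}$. Together with $\lambda(0)=\lambda_{\text{uni}}$, this yields $K(0^+)=\kappa_1$, which confirms the bifurcation point of Proposition \ref{prop:stab-unif}. Setting $\kappa_2 := K(1)$, the claim in part i) reduces to showing that $K$ is strictly increasing on $(0,1]$.

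\emph{Strictly supported branch.} We parametrize by $\phi\in(0,\pi)$, with $\lambda = -\kappa\|c_\rho\|\cos\phi$. Then the density is proportional to $(\cos\theta-\cos\phi)^p$ on $[0,\phi]$. The ratio equation gives $\|c_\rho\| = H(\phi)$, the $\cos\theta$-average of the weight $(\cos\theta-\cos\phi)^p\sin^{\dm-1}\theta$. The mass equation then expresses $\kappa\|c_\rho\|$, and hence $\kappa=\tilde K(\phi)$, explicitly. At $\phi=\pi$ this matches $K(1)$, since $a=1$ corresponds to $\lambda=\kappa\|c_\rho\|$. As $\phi\to0$ the support concentrates and $\tilde K\to\infty$. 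Part ii) then reduces to showing that $\tilde K$ is strictly decreasing in $\phi$. Global minimality follows by combining these results. Lemmas \ref{lem:constant-lambda} and \ref{lem:supportcondi} show that any minimizer has the form \eqref{eqn:equil}. Uniqueness of the nontrivial equilibrium for $\kappa>\kappa_1$ reduces the candidates to $\rhou$ and $\rho_\kappa$. Finally, $\rhou$ is unstable for $\kappa>\kappa_1$ by Proposition \ref{prop:stab-unif}, while a global minimizer exists by lower semicontinuity and compactness. For $\kappa<\kappa_1$ no nontrivial equilibrium exists, since $K>\kappa_1$ on $(0,1]$, so the minimizer is $\rhou$.

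\emph{Main obstacle.} The hard step is the monotonicity of $K$ and of $\tilde K$, and this is exactly where $m<2$, i.e.\ $p>1$, must enter. We first try logarithmic differentiation. We write $\log K = \log a - \log G - (m-1)\log\int w_a$, where $w_a = (1+a\cos\theta)^p\sin^{\dm-1}\theta$. Each derivative is a covariance-type quantity under the probability weight $w_a/\!\int w_a$. For example, $\partial_a \log\int w_a = p\,\langle \cos\theta/(1+a\cos\theta)\rangle$. We then show the resulting inequality with a Chebyshev/FKG correlation inequality for the monotone functions $\cos\theta$ and $(1+a\cos\theta)^{-1}$. Alternatively, we can integrate by parts in $\theta$, using $\frac{\d}{\d\theta}\sin^{\dm}\theta = \dm\sin^{\dm-1}\theta\cos\theta$, to rewrite $G$ via moments of $(1+a\cos\theta)^{p-1}$. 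This produces a Bernstein-type identity in which the sign of $p-1$ is decisive. The analogous argument for $\tilde K$ uses the substitution $t=(\cos\theta-\cos\phi)/(1-\cos\phi)$. This normalizes the support to $[0,1]$, after which the same correlation argument in $t$ should apply.
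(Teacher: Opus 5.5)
Your reduction matches the one the paper summarizes from \cite{FePaVa2025}. With $a=-1/\eta$, your $K(a)$ is exactly $1/H(\eta)$ from \eqref{eqn:kappa-eta}, and your $\tilde K(\phi)$ is $1/F(\phi)$ from \eqref{eqn:kappa-phi}. The endpoint values $K(0^+)=\kappa_1$, $K(1)=\tilde K(\pi^-)=\kappa_2$ and $\tilde K(0^+)=\infty$ are correct. The global-minimizer argument is also fine; for $\kappa<\kappa_1$ you additionally need $\tilde K>\kappa_2>\kappa_1$, which follows from the same monotonicity.

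The genuine gap is that everything rests on strict monotonicity of $K$ and $\tilde K$, which you do not prove. This is the content of Lemma~\ref{lem:H-monotone} and \cite[Lemma 4.3]{FePaVa2025}, which the paper imports rather than proves. Moreover, the tool you propose gives the inequality in the wrong direction. Carry out your logarithmic differentiation and use $1=\langle g\rangle+a\langle fg\rangle$ and $\langle f\rangle=\langle fg\rangle+a\langle f^2g\rangle$. One gets
\[
\mathrm{sgn}\,K'(a)=\mathrm{sgn}\bigl(p\langle fg\rangle-(p-1)\langle f\rangle\langle g\rangle\bigr),\qquad f=\cos\theta,\quad g=(1+a\cos\theta)^{-1},
\]
with averages taken against $w_a/\int w_a$. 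Chebyshev/FKG for the oppositely monotone pair $f,g$ gives $\langle fg\rangle\le\langle f\rangle\langle g\rangle$, which is an \emph{upper} bound. You need the \emph{lower} bound $\langle fg\rangle>\frac{p-1}{p}\langle f\rangle\langle g\rangle$. Worse, both $p\langle fg\rangle$ and $(p-1)\langle f\rangle\langle g\rangle$ equal $\frac{p(p-1)a}{\dm+1}+O(a^3)$. So the inequality degenerates at the bifurcation point and is decided only at order $a^3$, and no crude correlation inequality can detect it. Your ``same correlation argument in $t$'' for $\tilde K$ is likewise only asserted.

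The gap can be closed with the paper's own Section~\ref{sect:existence-mg2} and Appendix~\ref{appendix:signG} machinery, which depends on $m>2$ only through signs.

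For the strictly supported branch: the identity $\mathrm{sgn}\,F'(\phi)=\mathrm{sgn}\,G(\phi)$ in \eqref{eqn:sgnF-2}--\eqref{eqn:G}, the ODE \eqref{eqn:sprime} and the bounds \eqref{srange} hold for every $m>1$. The algebra in Lemma~\ref{lem:negativeprime} gives, at any zero $\phi_0$ of $G$,
\[
G'(\phi_0)=-\frac{(m-1)(\dm+1)(m-2)\bigl((m-1)(\dm-1)+2\bigr)\,s(\phi_0)^3\sin\phi_0}{1-(m-2)^2s(\phi_0)^2}.
\]
For $1<m<2$ the denominator is automatically positive, since $s<1$, so every zero would be an up-crossing. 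Together with $G(0^+)=(m-1)(2+\dm(m-1))>0$, this rules out any zero. Hence $F'>0$, i.e.\ $\tilde K$ is decreasing.

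For the fully supported branch: Lemma~\ref{lemma:ABC-fs} gives in the same way $\mathrm{sgn}\,H'(\eta)=\mathrm{sgn}\bigl(1+(m-1)(\dm+1)\eta s+(m-2)(1+\dm(m-1))s^2\bigr)$, with $s=\|c_\rho\|$ from \eqref{eqn:s-eta}. There is an analogue of \eqref{eqn:sprime} with $\cos\phi$ replaced by $\eta$. The same crossing computation then shows that every zero in $\eta<-1$ is a down-crossing when $1<m<2$. This expression tends to $0$ as $\eta\to-\infty$, so you still need its sign there. That sign is exactly the order-$a^3$ term above, and a direct expansion shows it has the right sign ($K'(a)>0$ for small $a$). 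With these two monotonicity statements established, or simply cited as the paper does, the rest of your argument goes through.
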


We summarize briefly the main ideas used to establish the existence of equilibria from Proposition \ref{prop:bif-mg1}.

{\em a) Equilibria with full support.} We denote 
\begin{equation}
\label{eqn:eta-notation}
\lambda=-\kappa \| c_\rho\| \eta.
\end{equation}
As $\lambda \geq \kappa \| c_\rho\|$ for these equilibria, $\eta$ lies in the range $\eta\leq -1$. With this notation, system \eqref{eqn:system-fs-mg1} has now to be solved for $\eta$ and $\|c_\rho\|$.
From \eqref{eqn:system-fs-mg1}, we can express $\| c_\rho\|$ and $\kappa$ in terms of $\eta$ as follows:
\begin{subequations}
\label{eqn:skappa-eta}
\begin{align}
 \| c_\rho\| &=\frac{\int_0^\pi (\cos\theta-\eta)^{\frac{1}{m-1}}\sin^{\dm-1}\theta \cos\theta\d\theta}{\int_0^\pi (\cos\theta-\eta)^{\frac{1}{m-1}}\sin^{\dm-1}\theta\d\theta}, \label{eqn:s-eta} \\[2pt]
 \kappa^{-1}&= H(\eta), \label{eqn:kappa-eta}
\end{align}
\end{subequations}
where $H(\eta)$ is defined for $\eta\leq -1$ by
\begin{equation}
\label{eqn:H}
H(\eta)= \frac{m-1}{m} (\dm w_\dm)^{m-1} \left(\int_0^\pi (\cos\theta-\eta)^{\frac{1}{m-1}}\sin^{\dm-1}\theta \cos\theta\d\theta\right)
\left(\int_0^\pi (\cos\theta-\eta)^{\frac{1}{m-1}}\sin^{\dm-1}\theta\d\theta\right)^{m-2}.
\end{equation}

In spite of its complicated expression, the function $H(\eta)$ has some remarkable monotonicity properties, as given by the following lemma.
\begin{lemma}\cite[Lemma 4.1]{FePaVa2025}
\label{lem:H-monotone}
For any dimension $\dm \geq 2$, the function $H(\eta)$ given by \eqref{eqn:H} is decreasing for $1<m<2$, is constant for $m=2$, and increasing for $m>2$.    
\end{lemma}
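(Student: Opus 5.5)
We need to prove Lemma \ref{lem:H-monotone}: $H$ is decreasing for $1<m<2$, constant for $m=2$, and increasing for $m>2$ on $\eta\le -1$.

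Set $p=\frac{1}{m-1}$ and, for $\eta\le -1$,
\[
A(\eta)=\int_0^\pi (\cos\theta-\eta)^{p}\sin^{\dm-1}\theta\,\d\theta, \qquad B(\eta)=\int_0^\pi (\cos\theta-\eta)^{p}\sin^{\dm-1}\theta\cos\theta\,\d\theta.
\]
Then $H=C\,B\,A^{m-2}$ with $C>0$, and the task is to determine the sign of $\frac{\d}{\d\eta}\log H = \frac{B'}{B}+(m-2)\frac{A'}{A}$. Here $B>0$, since $(\cos\theta-\eta)^p$ is increasing in $\cos\theta$ and the measure $\sin^{\dm-1}\theta\,\d\theta$ is symmetric under $\theta\mapsto\pi-\theta$. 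Introduce also
\[
A_1(\eta)=\int_0^\pi (\cos\theta-\eta)^{p-1}\sin^{\dm-1}\theta\,\d\theta, \qquad B_1(\eta)=\int_0^\pi (\cos\theta-\eta)^{p-1}\sin^{\dm-1}\theta\cos\theta\,\d\theta,
\]
so that $A'=-pA_1$ and $B'=-pB_1$.

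\textbf{Case $m=2$.} Then $p=1$ and $m-2=0$, so $H\propto B(\eta)=\int_0^\pi(\cos\theta-\eta)\cos\theta\sin^{\dm-1}\theta\,\d\theta=\int_0^\pi\cos^2\theta\sin^{\dm-1}\theta\,\d\theta$, because the odd moment vanishes. Hence $H$ is constant.

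\textbf{Sign of $(\log H)'$.} We have $-(\log H)'/p = \frac{B_1}{B}+(m-2)\frac{A_1}{A}$. Since $m-2=\frac{1}{p}-1$, the claim reduces to the following: for $m>2$ (i.e. $p<1$) we need $\frac{B_1}{B}<(1-\frac1p)\frac{A_1}{A}$, and for $m<2$ the reverse strict inequality. Equivalently, after multiplying by $pAB>0$, the quantity
\[
D(\eta)=p\,B_1A-(p-1)A_1B
\]
should be negative for $p<1$ and positive for $p>1$.

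\textbf{Integration by parts.} To reduce $D$, use
\[
\frac{\d}{\d\theta}\big[(\cos\theta-\eta)^{p}\sin^{\dm}\theta\big] = -p(\cos\theta-\eta)^{p-1}\sin^{\dm+1}\theta+\dm(\cos\theta-\eta)^p\sin^{\dm-1}\theta\cos\theta.
\]
The boundary terms vanish, so
\[
\dm B = p\int_0^\pi(\cos\theta-\eta)^{p-1}(1-\cos^2\theta)\sin^{\dm-1}\theta\,\d\theta .
\]
Writing $\cos\theta=(\cos\theta-\eta)+\eta$ splits $B_1=A+\eta A_1$ and $B=\int(\cos\theta-\eta)^{p}\cos\theta\,\cdot=\dots$. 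This yields linear relations among $A,A_1,B,B_1$ and the second moment $\int(\cos\theta-\eta)^{p-1}\cos^2\theta\sin^{\dm-1}\theta\,\d\theta$, which should let $D$ be rewritten as $(p-1)$ times a quantity of definite sign.

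\textbf{Main obstacle.} The hard step is establishing that definite sign. I would aim to express the remaining quantity as a covariance with respect to the probability measure $\mu_\eta\propto(\cos\theta-\eta)^{p-1}\sin^{\dm-1}\theta\,\d\theta$. Concretely, set $t=\cos\theta-\eta\ge 0$. The reduced inequality should then take the form of comparing $E_\mu[t]\,E_\mu[t\cos\theta]$ with $E_\mu[t^2]\,E_\mu[\cos\theta]$, or similar. The sign would follow from Chebyshev's association inequality: $t$ and $\cos\theta$ are comonotone functions of $\theta$. The factor $(p-1)$ then produces exactly the trichotomy $m<2$, $m=2$, $m>2$.

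If the identities do not close this neatly, the fallback is to consider $\eta\to-\infty$ asymptotics together with a sign argument. That is, show that $D$ cannot vanish on $(-\infty,-1]$ except when $p=1$, using the Chebyshev bound directly on $B_1A$ versus $A_1B$, which is itself a comparison of $\mu_\eta$-averages of $\cos\theta$ under the weights $1$ and $t$.
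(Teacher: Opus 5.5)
Your setup is sound. The case $m=2$, the identity $(\log H)'=-D/(AB)$ with the required signs of $D$, and the relations $B_1=A+\eta A_1$ and $(\dm+p)B=p\big((1-\eta^2)A_1-\eta A\big)$ are all correct. The second relation comes from your integration by parts, using $\int_0^\pi(\cos\theta-\eta)^{p-1}\cos^2\theta\sin^{\dm-1}\theta\,\d\theta=B+\eta B_1$. The gap is that the step carrying the whole content of the lemma, the sign of $D$, is only announced (``should'', ``or similar''), and the tool you propose for it cannot work.

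Closing the linear relations, $D$ becomes a function of the single ratio $\gamma=B_1/A_1=E_{\mu_\eta}[\cos\theta]$:
\[
D=\frac{p}{\dm+p}\,A_1^2\Big[(\dm+p)\gamma^2-(\dm+1)\eta\gamma-(p-1)\Big].
\]
This is not visibly $(p-1)$ times a signed quantity, and the two relations leave $\gamma$ undetermined, so you need a sharp inequality for $\gamma(\eta)$. How sharp it must be: as $\eta\to-\infty$ one has $\gamma=\frac{p-1}{(\dm+1)|\eta|}+O(|\eta|^{-3})$. The terms $-(\dm+1)\eta\gamma$ and $-(p-1)$ then cancel at order one, leaving
\[
\frac{D}{A_1^2}=\frac{p(p-1)(\dm+2p-1)}{(\dm+1)^2(\dm+3)}\,\eta^{-2}+O(\eta^{-4}).
\]
So the trichotomy is a second-order effect involving the fourth moment of $\cos\theta$, and any argument must resolve it uniformly on $(-\infty,-1]$.

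A correlation inequality cannot detect this.
\begin{itemize}
\item Since $t$ and $\cos\theta$ are comonotone, Chebyshev gives exactly $A_1B\ge AB_1$. This yields $D\ge AB_1$ when $p<1$ and $D\le AB_1$ when $p>1$, which is the wrong direction in both cases.
\item The comparison you suggest has the same sign for every $p$: $E_\mu[t]E_\mu[t\cos\theta]-E_\mu[t^2]E_\mu[\cos\theta]=-\eta\,\mathrm{Var}_\mu(\cos\theta)>0$. The $p$-dependence sits inside $\mu_\eta$, not in the comonotonicity, so this cannot produce the change of sign at $p=1$.
\item Your fallback (show $D\neq 0$, then use the asymptotics) is a valid strategy in principle. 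However, it needs a mechanism for non-vanishing, and the Chebyshev bound does not provide one.
\end{itemize}

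The paper itself does not prove this lemma; it quotes it from \cite{FePaVa2025}. The closest worked analogue is the proof of Proposition \ref{prop:sgnFp} for $F$. That proof uses an ODE for $s$ (Lemma \ref{appendix:lemma1}), a sign condition on the derivative at every zero (Lemma \ref{lem:negativeprime}), and second-order endpoint asymptotics (Lemma \ref{lem:Gnearpi}). For $H$, the same relations give, with $s(\eta)=B/A$,
\[
s'(\eta)=\frac{(\dm+p)s^2-\dm\eta s-p}{1-\eta^2},\qquad ps(1-\eta^2)(\log H)'=(p-1)(\dm+p)s^2-p(\dm+1)\eta s-p^2 .
\]
Up to the factor $(m-1)^{-2}$, the second expression is the one appearing in Section \ref{subsect:mg2:fs}. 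It vanishes at $\eta=-1$, where $s=p/(\dm+p)$, and it tends to $0$ as $\eta\to-\infty$. An argument of that type, which handles both degenerate ends, is what your proposal is missing.
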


It can be shown \cite{FePaVa2025} that 
\begin{equation}
\label{eqn:Hlim}
\Bigl( \lim_{\eta \to -\infty } H(\eta)\Bigr)^{-1} = \kappa_1,
\end{equation}
where $\kappa_1$ is the critical $\kappa$ identified in Proposition \ref{prop:stab-unif}. Also denote
\begin{equation}
\label{eqn:kappa2}
\kappa_{2}:= (H(-1))^{-1}.
\end{equation}
This is the second critical value of $\kappa$ from Proposition \ref{prop:bif-mg1}. Note that $\kappa_1<\kappa_2$ since $H(\eta)$ is decreasing when $1<m<2$. Also, by Lemma \ref{lem:H-monotone} and equations  \eqref{eqn:Hlim} and \eqref{eqn:kappa2}, for any $\kappa \in (\kappa_{1},\kappa_{2})$, there exists a unique $\eta_\kappa <-1$ that satisfies \eqref{eqn:kappa-eta}. Corresponding to this $\eta_\kappa$, we then find $\|c_{\rho_\kappa}\|$ from \eqref{eqn:s-eta}, and set $\lambda_\kappa = -\kappa \|c_{\rho_\kappa}\| \eta_\kappa$. This determines uniquely a fully supported equilibrium $\rho_\kappa$ given by \eqref{eqn:rhok-fs}. 

\begin{remark}
\label{rmk:kappac}
As $\kappa \searrow \kappa_1$ (which corresponds to $\eta_\kappa \searrow -\infty$ and $\|c_{\rho_\kappa}\| \searrow 0$, along with $\lambda_\kappa \to \lambda_{\text{uni}}$ defined in \eqref{eqn:lambda-uni}), $\rho_\kappa$ approaches the uniform distribution.  At $\kappa = \kappa_1$, $\rho_\kappa$ gets born from the uniform distribution $\rhou$. 
\end{remark}
\medskip

{\em b) Equilibria with strict support.} Recall notation \eqref{eqn:phi-notation}. By writing $\lambda = - \kappa \|c_\rho\| \cos \phi $, the unknowns to be solved for in system \eqref{eqn:system-cs-mg1} are $\phi$ and $\|c_\rho\|$. System \eqref{eqn:system-cs-mg1} can be rewritten as
\begin{subequations}
\label{eqn:skappa-phi}
\begin{align}
 \| c_\rho \|&=\displaystyle\frac{\int_0^\phi (\cos\theta-\cos\phi)^{\frac{1}{m-1}}\sin^{\dm-1}\theta \cos\theta\d\theta}{\int_0^\phi (\cos\theta-\cos\phi)^{\frac{1}{m-1}}\sin^{\dm-1}\theta\d\theta}, \label{eqn:s-phi}\\[5pt]
 \kappa^{-1}& = F(\phi),  \label{eqn:kappa-phi}
\end{align}
\end{subequations}
where $F$ is a function defined on $0<\phi<\pi$ given by
\begin{equation}
\label{eqn:F}
F(\phi)= \frac{m-1}{m} (\dm w_\dm)^{m-1} \left(\int_0^\phi (\cos\theta-\cos\phi)^{\frac{1}{m-1}}\sin^{\dm-1}\theta \cos\theta\d\theta\right)\left(\int_0^\phi (\cos\theta-\cos\phi)^{\frac{1}{m-1}}\sin^{\dm-1}\theta \d\theta\right)^{m-2}.
\end{equation}
The monotonicity of the function $F$ has a key role on the bifurcation theory considered here. For any $1<m<2$ fixed, $F(\phi)$ is an increasing function on $(0,\pi)$ \cite[Lemma 4.3]{FePaVa2025}. Also, $F(0)=0$ and $F(\pi) = H(-1) = \kappa_2^{-1}$ (see \eqref{eqn:H} and \eqref{eqn:kappa2}), and hence the range of $F$ is $\left(0,\frac{1}{\kappa_2}\right)$. Therefore, equation \eqref{eqn:kappa-phi} has a unique solution $\phi_\kappa$ for every $\kappa_2<\kappa<\infty$. This leads to a unique strictly supported equilibrium $\rho_\kappa$ given by \eqref{eqn:rhok-cs}, where $\| c_{\rho_\kappa}\|$ is found from \eqref{eqn:s-phi} and $\lambda_\kappa = - \kappa \|c_{\rho_\kappa}\| \cos \phi_\kappa $, for every $\kappa_2<\kappa<\infty$. As $\kappa \nearrow \infty$, $\phi_\kappa \searrow 0$ and $\rho_\kappa$ approaches a Dirac delta concentration.

Given the information in Propositions \ref{prop:stab-unif} and \ref{prop:bif-mg1}, the stability of the equilibrium solutions can be inferred immediately. The uniform distribution is stable for $\kappa<\kappa_1$ and unstable for $\kappa>\kappa_1$. On the other hand, for $\kappa>\kappa_1$ there exists only one other equilibrium density $\rho_\kappa$, in addition to $\rhou$. The equilibrium $\rho_\kappa$ is a global minimizer, and hence stable. 

\begin{remark}\label{rmk:m12}
For $m$ in the range $1<m<2$, the bifurcation at $\kappa=\kappa_1$ is a supercritical pitchfork bifurcation, as the uniform distribution becomes unstable (see Proposition~\ref{prop:stab-unif}), while a branch of stable equilibria $\rho_\kappa$ emerges; see also Figure~\ref{fig:m12-srho}(a).
\end{remark}

An illustration is provided in Figure \ref{fig:m12-srho}. In Figure \ref{fig:m12-srho}(a) we plot the norms of the centre of mass of the equilibria; blue corresponds to $\rhou$, red to fully supported $\rho_\kappa$ ($\kappa_1<\kappa<\kappa_2$, see \eqref{eqn:rhok-fs}), and black to strictly supported $\rho_\kappa$ ($\kappa>\kappa_2$, see \eqref{eqn:rhok-cs}). Note the bifurcation at $\kappa = \kappa_1$, and the transition of $\rho_\kappa$ at $\kappa=\kappa_2$ from a fully supported equilibrium density to an equilibrium with strict support. Figure \ref{fig:m12-srho}(b) shows several profiles of the equilibrium densities $\rho_\kappa$ for various values of $\kappa$ (at $\kappa=\kappa_1$, $\rho_\kappa$ coincides with the uniform distribution). The equilibria concentrate as $\kappa$ increases. For these numerical results we used $m=1.2$ and $\dm = 4$, for which $\kappa_1 \approx 3.1196$ and $\kappa_2 \approx 4.3888$.

\begin{figure}[htbp]
 \begin{center}
 \begin{tabular}{cc}
 \includegraphics[width=0.45\textwidth]{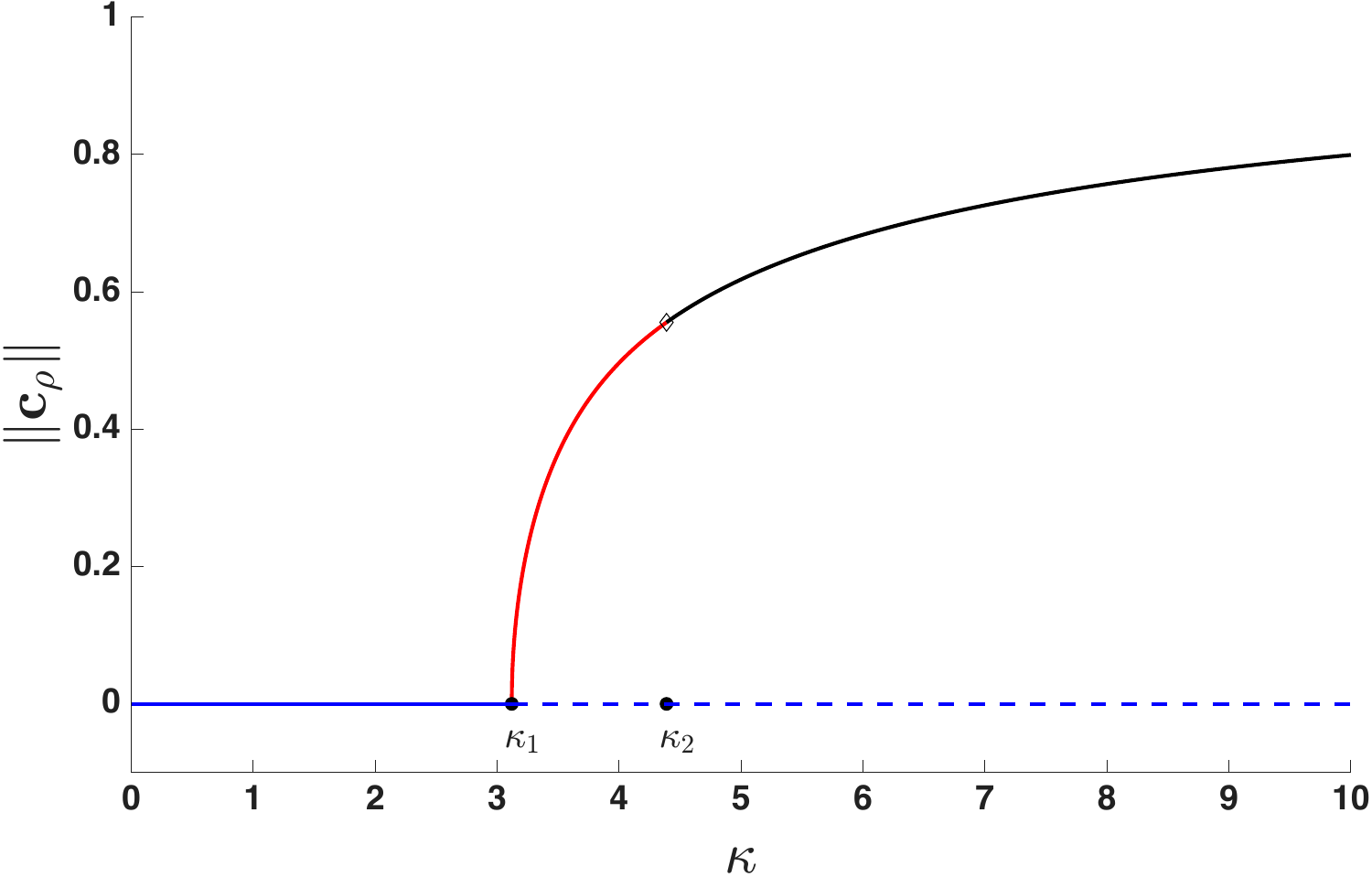} & 
 \includegraphics[width=0.48\textwidth]{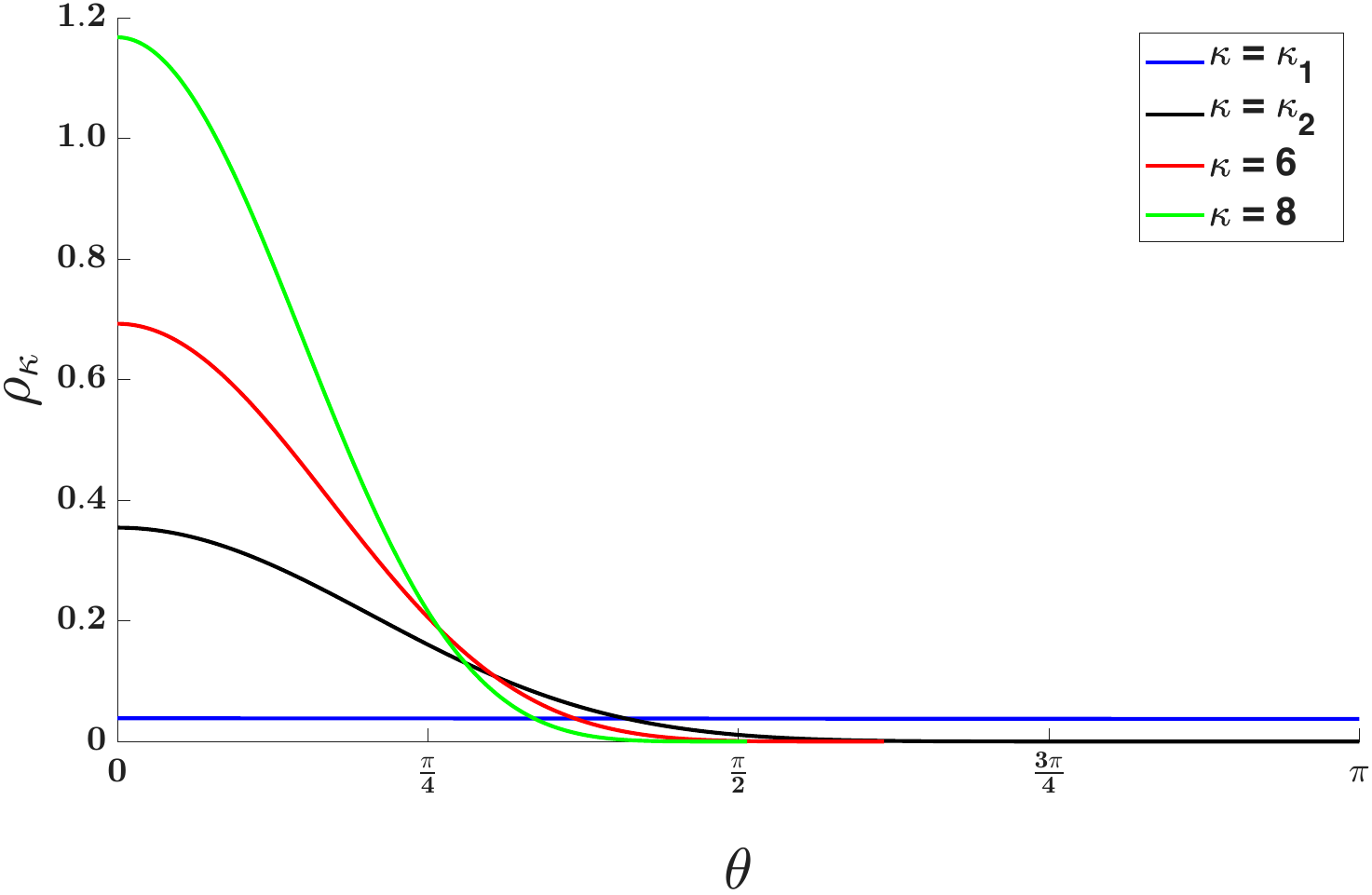} \\
 (a) & (b)
\end{tabular}
\caption{Case $1<m<2$. (a) Plot of the norm of the centre of mass of equilibria. Blue corresponds to $\rhou$, red to the fully supported $\rho_\kappa$ given by \eqref{eqn:rhok-fs}, and black to the strictly supported $\rho_\kappa$ from \eqref{eqn:rhok-cs}  -- see Proposition \ref{prop:bif-mg1}. At $\kappa=\kappa_1$, the fully supported equilibrium $\rho_\kappa$ emerges from the uniform distribution. At $\kappa=\kappa_2$, $\rho_\kappa$ changes from being fully supported to strictly supported on $\bbs^\dm$ -- this transition is indicated by a black diamond. (b) Plot of equilibria $\rho_\kappa$ (see \eqref{eqn:rhok-fs} and \eqref{eqn:rhok-cs}) for various values of $\kappa$. At $\kappa=\kappa_1$, the equilibrium is the uniform distribution. The equilibria $\rho_\kappa$ become more and more concentrated as $\kappa$ increases. We used $m=1.2$ and $\dm = 4$.}
\label{fig:m12-srho}
\end{center}
\end{figure}

\subsubsection{\underline{Case (ii) $m>2$}}
\label{subsubsect:prelim:mg2}
This is a significantly more complex case, which is in fact the focus of the present paper. In this case, only the stability of $\rhou$ is known, and the stability of the other equilibria needs to be established. 

The range $m>2$ was only considered in dimension $\dm=2$ in \cite{FePaVa2025}, due to technical difficulties in assessing the monotonicity of the function $F(\phi)$ for arbitrary dimensions. For $\dm =2$ and $m>2$, there exist three critical values $\kappa_3<\kappa_2<\kappa_1$ of $\kappa$. The critical value $\kappa_3$ is novel, as it does not exist for $1<m<2$. At $\kappa = \kappa_3$, a pair of strictly supported equilibria in the form \eqref{eqn:rhok-cs} gets born -- see Figure \ref{fig:m35-splot} for an illustration. One of them (black dashed line) transitions to a fully supported equilibrium at $\kappa=\kappa_2$, and then merges with the uniform distribution at $\kappa=\kappa_1$ (red dashed line in Figure \ref{fig:m35-splot}). On the other hand, the other equilibrium that gets born at $\kappa=\kappa_3$ (black solid line) undergoes no further transitions and exists for all $\kappa>\kappa_3$. The precise statement on the existence of equilibria is the following.

\begin{proposition} \cite[Proposition 6.1]{FePaVa2025}
\label{prop:bif-mg2}
For $\dm=2$ and any $m>2$, there exist three critical values of $\kappa$ ($\kappa_1>\kappa_2>\kappa_3$) such that:

\noindent i) At $\kappa=\kappa_3$, a pair of equilibria with support strictly contained in $\bbs^\dm$ gets born. For any $\kappa \in (\kappa_3,\kappa_2)$, there exist two solutions $0<\phi_{\kappa,1}<\phi_{\kappa,2}<\pi$ of \eqref{eqn:kappa-phi}; at $\kappa=\kappa_3$, the two solutions $\phi_{\kappa,1}$ and $\phi_{\kappa,2}$ coincide, with common value $\bar{\phi}$. Correspondingly, there exist two equilibria $\rho_{\kappa,1}$ and $\rho_{\kappa,2}$ of the form (see \eqref{eqn:equil-cs}):
\begin{equation}
\label{eqn:rhok-cs-mg2}
\rho_{\kappa,i}(x)=\begin{cases}
\left(\frac{m-1}{m}\right)^{\frac{1}{m-1}}\left(\lambda_{\kappa,i}+\kappa \|c_{\rho_{\kappa,i}}\| \cos\theta_x\right)^{\frac{1}{m-1}},\qquad&\text{ if }0\leq\theta_x\leq \phi_{\kappa,i},\\[5pt]
0,\qquad&\text{ otherwise},
\end{cases}
\end{equation}
where $\phi_{\kappa,i}$, $\|c_{\rho_{\kappa,i}}\|$ and $\lambda_{\kappa,i}$ are determined by $\kappa$ ($i=1,2$) -- see \eqref{eqn:skappa-phi}, and also \eqref{eqn:phi-notation}. 
\smallskip

\noindent ii) At $\kappa=\kappa_2$, the equilibrium $\rho_{\kappa,2}$ from part i) becomes fully supported on $\bbs^\dm$ (i.e., $\phi_{\kappa_2,2} = \pi$). For $\kappa_2<\kappa<\kappa_1$, $\rho_{\kappa,2}$ has the form (see  \eqref{eqn:equil-fs}):
\begin{equation}
\label{eqn:rhok-fs-mg2}
\rho_{\kappa,2}(x)=\left(\frac{m-1}{m}\right)^{\frac{1}{m-1}}\left(\lambda_{\kappa,2}+\kappa \|c_{\rho_{\kappa,2}} \|\cos\theta_x\right)^{\frac{1}{m-1}},\qquad\forall x\in \bbs^\dm,
\end{equation}
where $\|c_{\rho_{\kappa,2}}\|$ and $\lambda_{\kappa,2}$ are uniquely determined by $\kappa$ -- see \eqref{eqn:skappa-eta}, and also \eqref{eqn:eta-notation}.  On the other hand, $\rho_{\kappa,1}$ undergoes no transition at $\kappa=\kappa_2$, and retains the form \eqref{eqn:rhok-cs-mg2}.
\smallskip

\noindent iii) At $\kappa=\kappa_1$, the fully supported equilibrium $\rho_{\kappa,2}$ from part ii) merges with the uniform distribution $\rhou$, and vanishes. And again,  $\rho_{\kappa,1}$ undergoes no transition at $\kappa=\kappa_1$; as $\kappa$ increases to infinity, $\phi_{\kappa,1}$ approaches $0$ and $\rho_{\kappa,1}$ concentrates into a Dirac delta.
\end{proposition}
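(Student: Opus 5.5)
The statement reduces entirely to counting solutions of the two scalar equations $\kappa^{-1}=H(\eta)$ on $\eta\le -1$ (fully supported equilibria) and $\kappa^{-1}=F(\phi)$ on $\phi\in(0,\pi)$ (strictly supported equilibria), using the parametrizations \eqref{eqn:skappa-eta} and \eqref{eqn:skappa-phi}. The plan has three stages: first $H$, then $F$, then gluing the branches together.

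\textbf{Step 1: the fully supported branch.} For $m>2$, Lemma \ref{lem:H-monotone} shows that $H$ is increasing on $(-\infty,-1]$. By \eqref{eqn:Hlim} its infimum, approached as $\eta\to-\infty$, is $\kappa_1^{-1}$, and its maximum is $H(-1)=\kappa_2^{-1}$. Hence $\kappa_2<\kappa_1$. For each $\kappa\in(\kappa_2,\kappa_1)$ there is then a unique $\eta_\kappa<-1$, which gives the unique fully supported equilibrium $\rho_{\kappa,2}$. As $\kappa\nearrow\kappa_1$ we have $\eta_\kappa\to-\infty$, so $\|c_\rho\|\to0$ and $\rho_{\kappa,2}\to\rhou$, exactly as in Remark \ref{rmk:kappac}. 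This proves part iii) for $\rho_{\kappa,2}$ and the fully supported half of part ii).

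\textbf{Step 2: the shape of $F$ for $d=2$.} We need the following properties.
\begin{itemize}
\item[(a)] $F(\phi)\to0$ as $\phi\to0$. With $\theta\approx0$, the ratio \eqref{eqn:s-phi} tends to $1$. The mass integral is of order $\phi^{2\cdot\frac{1}{m-1}+d}$, and it is raised to the power $m-2>0$. So $F(\phi)\to0$.
\item[(b)] $F(\pi)=H(-1)=\kappa_2^{-1}$, with $F$ continuous on $(0,\pi]$.
\item[(c)] $F$ is unimodal: it increases on $(0,\bar\phi)$ and decreases on $(\bar\phi,\pi)$ for some interior $\bar\phi$, and the maximum $F(\bar\phi)=:\kappa_3^{-1}$ is strictly larger than $\kappa_2^{-1}$.
\end{itemize}

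For the strict inequality in (c), it suffices to show $F'(\phi)<0$ near $\pi$. I would compute this from the explicit form of $F$ near $\phi=\pi$, where $\cos\phi\approx-1+\tfrac12(\pi-\phi)^2$. The matching with $H$ at $\eta=-1$ makes the sign of $F'(\pi^-)$ opposite to that of $H'(-1)>0$.

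For $d=2$ the substitution $u=\cos\theta-\cos\phi$ removes the weight $\sin^{d-1}\theta$. Both integrals then become explicit:
\[
\int_0^{1-\cos\phi}u^{\frac1{m-1}}\,du,\qquad \int_0^{1-\cos\phi}u^{\frac1{m-1}}\,(u+\cos\phi)\,du .
\]
With $p=\frac1{m-1}+1$ and $t=1-\cos\phi\in(0,2)$, $F$ becomes an elementary function of $t$, a product of powers of $t$ and of a linear factor $\tfrac{t}{p+1}+\tfrac{(1-t)}{p}$, up to constants. Setting the logarithmic derivative of $F$ in $t$ equal to zero then gives a single linear equation in $t$. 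This shows there is exactly one critical point in $(0,2)$, and by (a), (b) and $F>\kappa_2^{-1}$ near $\pi$ it must be a maximum.

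\textbf{Step 3: counting and gluing.} By unimodality, $\kappa^{-1}=F(\phi)$ has two solutions $\phi_{\kappa,1}<\bar\phi<\phi_{\kappa,2}$ for $\kappa\in(\kappa_3,\kappa_2)$, and they merge at $\bar\phi$ when $\kappa=\kappa_3$. For $\kappa\ge\kappa_2$ only $\phi_{\kappa,1}$ survives, and $\phi_{\kappa,2}\to\pi$ as $\kappa\searrow\kappa_2$. At that point it matches the fully supported branch of Step 1 with $\eta=-1$, which gives part ii). For all $\kappa>\kappa_3$ the branch $\phi_{\kappa,1}$ persists, and $\phi_{\kappa,1}\to0$ as $\kappa\to\infty$ by (a); the density then concentrates to a Dirac mass, which gives part iii).

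\textbf{Main obstacle.} The hard step is (c), the unimodality of $F$ together with the strict inequality $\max F>F(\pi)$. This is precisely what fails to be easy in general $d$, and it is where the special structure of $d=2$ is used.
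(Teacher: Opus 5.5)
Your proposal is correct. Its skeleton matches the structure the paper describes: monotonicity of $H$ for the fully supported branch, unimodality of $F$ for the strictly supported branch, and gluing through $F(\pi)=H(-1)$. The difference lies in how you obtain the unimodality of $F$.

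For $\dm=2$ the paper simply quotes \cite[Lemma 6.1]{FePaVa2025}. The argument it actually carries out is Proposition~\ref{prop:sgnFp}, valid for all $\dm\ge 2$, and it is indirect. The moment identities for $A,B,C$ give $\mathrm{sgn}\,F'(\phi)=\mathrm{sgn}\,G(\phi)$. Three further facts then force a single sign change: $G(0+)>0$; $\lim_{\phi\to\pi^-}G/\sin^2\phi<0$, obtained by a l'H\^opital computation built on the ODE for $s(\phi)$; and $G'(\phi_0)<0$ at every zero.

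Your substitution $u=\cos\theta-\cos\phi$ bypasses all of this, but only because $\sin^{\dm-1}\theta\,\d\theta=-\d u$ when $\dm=2$. You gain a closed form; the paper's route covers all dimensions.

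You should push the computation to the end, because it makes your ``matching with $H$'' step unnecessary. That step is also shaky as phrased. First, $F'(\phi)\to 0$ as $\phi\to\pi^-$ because of the factor $\sin\phi$. Second, Lemma~\ref{lem:H-monotone} gives monotonicity of $H$, not $H'(-1)>0$.

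Since $p(m-1)=m$, one gets
\[
F=c\,t^{m}\Bigl(1-\tfrac{m-1}{2m-1}\,t\Bigr),\qquad t=1-\cos\phi\in(0,2],\quad c>0.
\]
Its only critical point is $t^*=\frac{m(2m-1)}{m^2-1}$, and $t^*<2$ if and only if $m>2$. This gives directly:
\begin{enumerate}
\item $\bar\phi=\arccos(1-t^*)\in(0,\pi)$, with $F$ increasing on $(0,\bar\phi)$ and decreasing on $(\bar\phi,\pi)$;
\item $\frac{\d F}{\d t}\big|_{t=2}=c\,2^{m-1}\frac{2-m}{2m-1}<0$, hence $F(\bar\phi)>F(\pi)$ and $\kappa_3<\kappa_2$ strictly;
\item the exact place where $m>2$ is used: for $1<m<2$ one has $t^*>2$ and $F$ is increasing.
\end{enumerate}
As a check, for $m=3.5$ the formula reproduces the values $\kappa_2\approx 0.0174$ and $\kappa_3\approx 0.0166$ quoted in the paper.
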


We summarize again the key ideas behind Proposition \ref{prop:bif-mg2}, distinguishing the fully and strictly supported equilibria.

{ \em a) Equilibria of full support.} With the same notations as in the case $1<m<2$, the problem reduces to finding solutions to \eqref{eqn:kappa-eta} in the range $\eta \leq -1$. Recall \eqref{eqn:Hlim} and the notation \eqref{eqn:kappa2}. For $m>2$, however, the function $H(\eta)$ is increasing, and hence, $\kappa_2<\kappa_1$. For any $\kappa_2<\kappa<\kappa_1$, \eqref{eqn:kappa-eta} has a unique solution -- see Figure \ref{fig:HF-mg2}(a) for an illustration. Together with \eqref{eqn:s-eta} and \eqref{eqn:eta-notation}, this determines a unique equilibrium given by \eqref{eqn:rhok-fs-mg2}. The main distinction from the case $1<m<2$ is that for $m>2$, $\rho_\kappa$ emerges from the uniform distribution as $\kappa$ {\em decreases} through $\kappa_1$. Here, $\rho_\kappa$ approaches $\rhou$ as $\kappa \nearrow \kappa_1$.

\begin{figure}[!htbp]
 \begin{center}
 \begin{tabular}{cc}
 \includegraphics[width=0.48\textwidth]{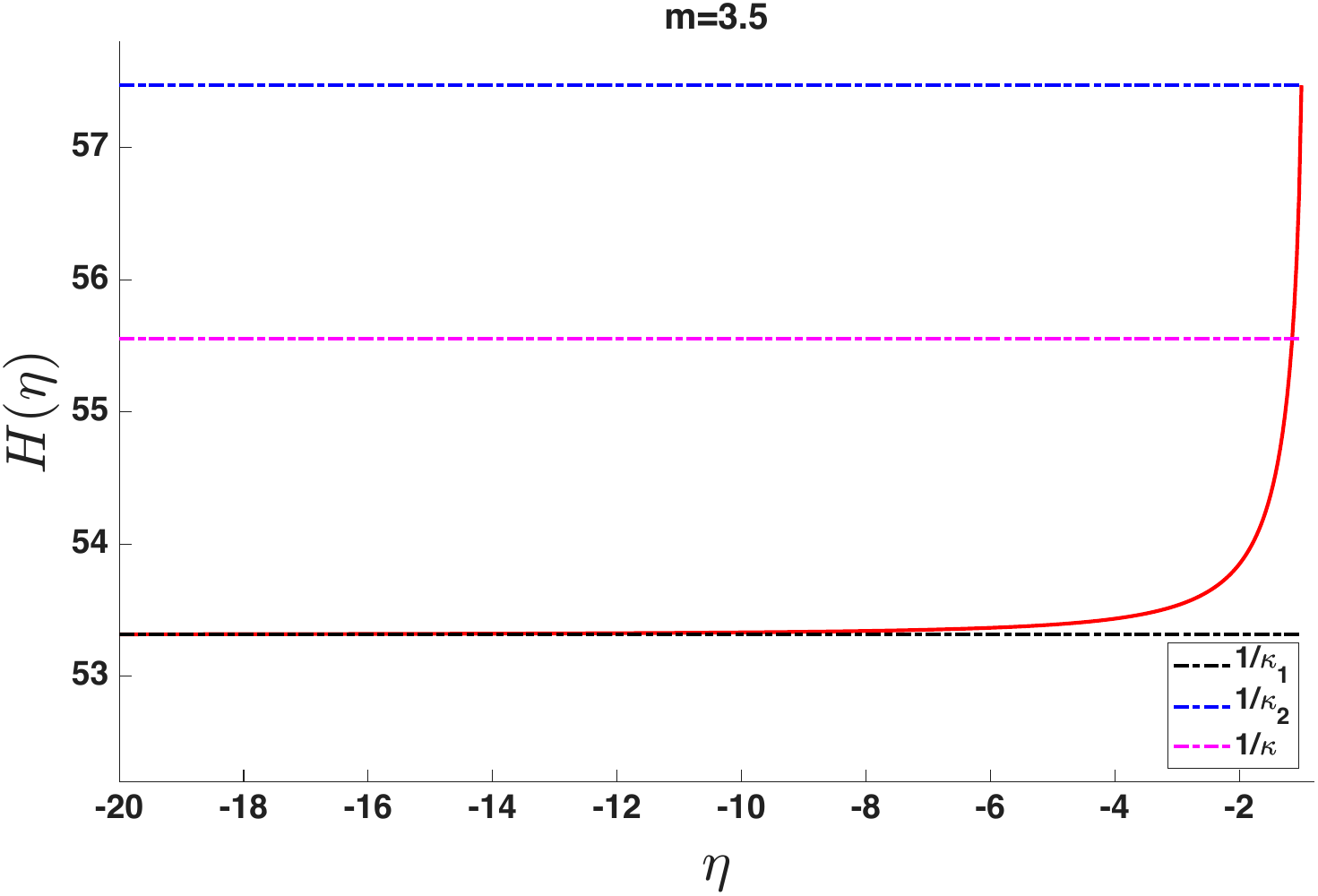} & 
 \includegraphics[width=0.48\textwidth]{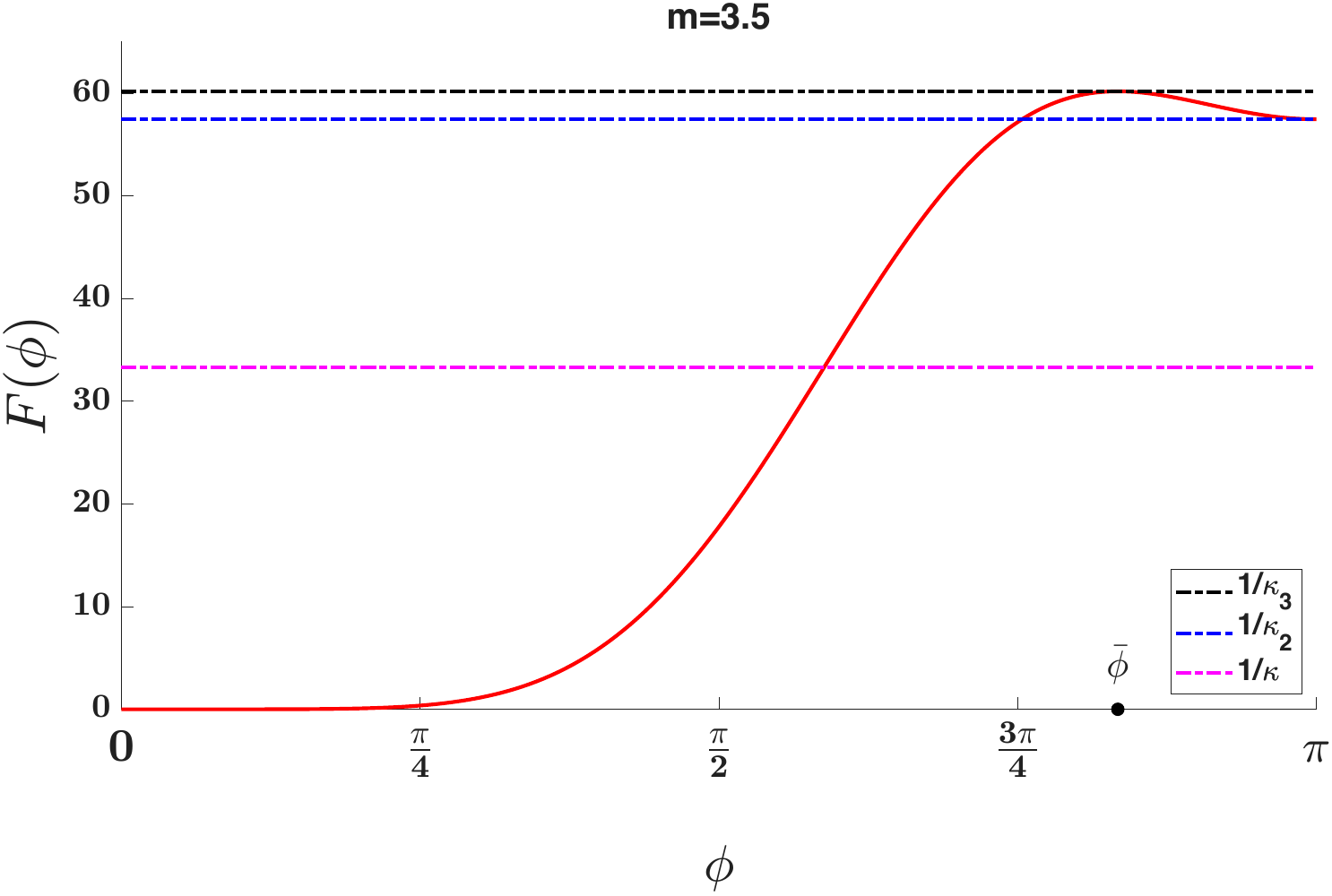} \\
 (a) & (b)
\end{tabular}
\caption{Case $\dm =2$, $m>2$. (a) Plot of function $H$ defined in \eqref{eqn:H}; for $m>2$, $H(\eta)$ is increasing (see Lemma \ref{lem:H-monotone}). For any $\kappa_2<\kappa<\kappa_1$, there exists a unique solution $\eta_\kappa <-1$ to \eqref{eqn:kappa-eta}. (b) Plot of function $F$ defined in \eqref{eqn:F}. The function changes monotonicity at $\bar{\phi}$, where it has a global maximum; see Lemma \ref{lem:F-monotone-mg2}. For $\kappa_3<\kappa<\kappa_2$, there exist two solutions $\phi_{\kappa,1} \in (0,\bar{\phi})$ and $\phi_{\kappa,2}\in(\bar{\phi},\pi)$ of \eqref{eqn:kappa-phi}. 
For $\kappa>\kappa_2$, there exists a unique solution $\phi_{\kappa,1} \in (0,\bar{\phi})$ of \eqref{eqn:kappa-phi}.  For both plots, $m=3.5$.}
\label{fig:HF-mg2}
\end{center}
\end{figure}

{ \em b) Equilibria of strict support.} Using again previous notations, one needs to solve first \eqref{eqn:kappa-phi} to find the possible values $\phi$ of the size of the support of the equilibria. For $m>2$, the monotonicity of the function $F(\phi)$ was not assessed in general however, as it was only established for dimension $\dm=2$. The result is the following.

\begin{lemma}\cite[Lemma 6.1]{FePaVa2025}
\label{lem:F-monotone-mg2}
Let $m>2$, $\dm=2$, and the function $F(\phi)$ defined on $0<\phi<\pi$ given by \eqref{eqn:F}. Then, there is $\bar{\phi} \in (0,\pi)$ such that $F$ is increasing on $(0,\bar{\phi})$ and decreasing on $(\bar{\phi}, \pi)$. 
\end{lemma}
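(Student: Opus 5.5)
In dimension $\dm=2$ the weight $\sin^{\dm-1}\theta=\sin\theta$ is exactly the Jacobian of $u=\cos\theta$, so the plan is to compute $F$ in closed form and then study an explicit one-variable function. Set $a=\frac{1}{m-1}$, $c=\cos\phi$, and $t=1-\cos\phi\in(0,2)$. Since $t$ is a strictly increasing function of $\phi\in(0,\pi)$, it suffices to prove that $F$, viewed as a function of $t$, increases up to some $t^*\in(0,2)$ and decreases afterwards.

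First, in both integrals in \eqref{eqn:F}, substitute $u=\cos\theta-\cos\phi$, so that $\sin\theta\,\d\theta=-\d u$ and $u$ ranges over $[0,t]$. This gives
\begin{equation}
I_0:=\int_0^\phi(\cos\theta-\cos\phi)^{a}\sin\theta\,\d\theta=\frac{t^{a+1}}{a+1},
\end{equation}
\begin{equation}
I_1:=\int_0^\phi(\cos\theta-\cos\phi)^{a}\sin\theta\cos\theta\,\d\theta=\int_0^t u^a(u+c)\,\d u=\frac{t^{a+2}}{a+2}+\frac{c\,t^{a+1}}{a+1}.
\end{equation}
Using $c=1-t$, the second integral simplifies to
\begin{equation}
I_1=\frac{t^{a+1}}{a+1}\Bigl(1-\frac{t}{a+2}\Bigr).
\end{equation}
Next, since $(a+1)(m-1)=m$, we have $I_1I_0^{m-2}=(a+1)^{-(m-1)}\,t^{(a+1)(m-1)}\bigl(1-\frac{t}{a+2}\bigr)$. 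Hence
\begin{equation}
F=C_m\Bigl(t^m-\frac{t^{m+1}}{a+2}\Bigr)
\end{equation}
for some constant $C_m>0$.

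Differentiating in $t$,
\begin{equation}
\frac{\d F}{\d t}=C_m\,t^{m-1}\Bigl(m-\frac{(m+1)t}{a+2}\Bigr).
\end{equation}
This is positive for $t<t^*$ and negative for $t>t^*$, where
\begin{equation}
t^*=\frac{m(a+2)}{m+1}=\frac{m(2m-1)}{(m+1)(m-1)}>0.
\end{equation}
The only point to check is that $t^*<2$, i.e. that the turning point lies inside $(0,2)$. This is equivalent to $m(2m-1)<2(m^2-1)$, i.e. $2m^2-m<2m^2-2$, which holds exactly when $m>2$. For $1<m<2$ the same computation gives $t^*\ge2$, so $F$ is increasing, which is consistent with the earlier case.

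Finally, set $\bar\phi=\arccos(1-t^*)\in(0,\pi)$. Because $t$ is strictly increasing in $\phi$, $F$ is increasing on $(0,\bar\phi)$ and decreasing on $(\bar\phi,\pi)$.

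There is no real obstacle here: the only delicate step is the bookkeeping in the closed-form evaluation, in particular the identity $(a+1)(m-1)=m$ that collapses the product to a two-term polynomial in $t$. This explicit reduction is special to $\dm=2$. For general $\dm$ the factor $\sin^{\dm-1}\theta=(1-\cos^2\theta)^{(\dm-2)/2}\sin\theta$ prevents it, which explains why the general-dimension case needs a different argument.
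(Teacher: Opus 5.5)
Your proof is correct, and it takes a genuinely different route from the argument in this paper. Here the lemma is only quoted from \cite{FePaVa2025} and is not reproved. The argument the paper actually gives is the general-dimension Proposition~\ref{prop:sgnFp}, which never evaluates $F$.

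\textbf{The paper's route.} It uses the moment identities of Lemma~\ref{lemma:ABC-cs} to show $\mathrm{sgn}\,F'(\phi)=\mathrm{sgn}\,G(\phi)$, with $G$ as in \eqref{eqn:G}. It then establishes qualitatively that $G$ changes sign exactly once:
\begin{itemize}
\item $G(0^+)>0$;
\item $G<0$ near $\pi$, shown via l'H\^{o}pital applied to $G/\sin^2\phi$ together with the ODE \eqref{eqn:sprime} for $s$;
\item every zero of $G$ is crossed downward (Lemma~\ref{lem:negativeprime}).
\end{itemize}
This works in every dimension $\dm\geq 2$, but it yields $\bar\phi$, and hence $\kappa_3$, only implicitly.

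\textbf{Your route.} The substitution $u=\cos\theta-\cos\phi$ is available only because $\sin^{\dm-1}\theta=\sin\theta$ when $\dm=2$. It collapses everything to the explicit function $C_m\bigl(t^m-t^{m+1}/(a+2)\bigr)$. This buys three things:
\begin{itemize}
\item the closed form $\bar\phi=\arccos(1-t^*)$;
\item therefore an explicit $\kappa_3=F(\bar\phi)^{-1}$;
\item a transparent role for $m>2$, which appears exactly as the condition $t^*<2$.
\end{itemize}
The price, as you note, is that the computation does not extend beyond $\dm=2$. That is precisely why the paper needs the $G$-machinery.

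\textbf{Consistency check.} For $\dm=2$ your computation gives $s=I_1/I_0=1-\frac{(m-1)t}{2m-1}$. Substituting this into \eqref{eqn:G}, $G$ factors as $\bigl((m+1)s-1\bigr)\bigl((2m-1)s-1\bigr)$. The second factor is positive by \eqref{srange}. So the unique zero of $G$ is at $s=\frac{1}{m+1}$, which corresponds exactly to your $t=t^*$.
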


The third critical value $\kappa_3$ from Proposition \ref{prop:bif-mg2} is given by
\begin{equation}
\label{eqn:kappa3}
\kappa_3:= (F(\bar{\phi}))^{-1}.
\end{equation}
By Lemma \ref{lem:F-monotone-mg2}, at $\kappa=\kappa_3$, the equation $F(\phi) = \kappa^{-1}$ has one solution given by $\bar{\phi}$ -- see the black dash-dotted line in Figure \ref{fig:HF-mg2}(b). Recall that by their expressions in \eqref{eqn:H} and \eqref{eqn:F}, the functions $H$ and $F$ satisfy 
\begin{equation}
\label{eqn:HF-rel}
H(-1)=F(\pi),
\end{equation}
and therefore $\kappa_2 = F(\pi)^{-1}$ (see \eqref{eqn:kappa2}). We then have $\kappa_2 > \kappa_3$, and for any $\kappa_3<\kappa<\kappa_2$, the equation \eqref{eqn:kappa-phi} has two solutions: $\phi_{\kappa,1} \in (0, \bar{\phi})$ and $\phi_{\kappa,2} \in (\bar{\phi},\pi)$. Also, for $\kappa >\kappa_2$, there exists only one solution $\phi_{\kappa,1} \in (0, \bar{\phi})$ of \eqref{eqn:kappa-phi} -- see the magenta dash-dotted line in Figure \ref{fig:HF-mg2}(b). As $\kappa$ increases to $\infty$, $\phi_{\kappa,1}$ decreases to $0$, and the corresponding equilibrium $\rho_{\kappa,1}$ approaches a delta Dirac.

 Figure \ref{fig:m35-splot} shows an illustration of the norm of the centres of mass of equilibria. It shows the emergence of $\rho_{\kappa,1}$ and $\rho_{\kappa,2}$ at $\kappa=\kappa_3$ (solid black and dashed black lines, respectively), the transition of $\rho_{\kappa,2}$ from fully supported to strictly supported at $\kappa=\kappa_2$ (note the black diamond), and the merging of the fully supported $\rho_{\kappa,2}$ (dashed red line) with $\rhou$ at $\kappa= \kappa_1$. We point out that similar to case $1<m<2$, a fully supported equilibrium emerges from the uniform distribution at $\kappa=\kappa_1$, and then it changes from fully supported to strictly supported at $\kappa=\kappa_2$, but now these transitions happen as $\kappa$ {\em decreases} through $\kappa_1$ and $\kappa_2$, respectively. The numerical results correspond to $m=3.5$ and $\dm=2$, for which $\kappa_3 \approx 0.0166$, $\kappa_2 \approx 0.0174$, and $\kappa_1 \approx 0.0188$.

\begin{figure}[htbp]
 \begin{center}
 \begin{tabular}{cc}
 \includegraphics[width=0.48\textwidth]{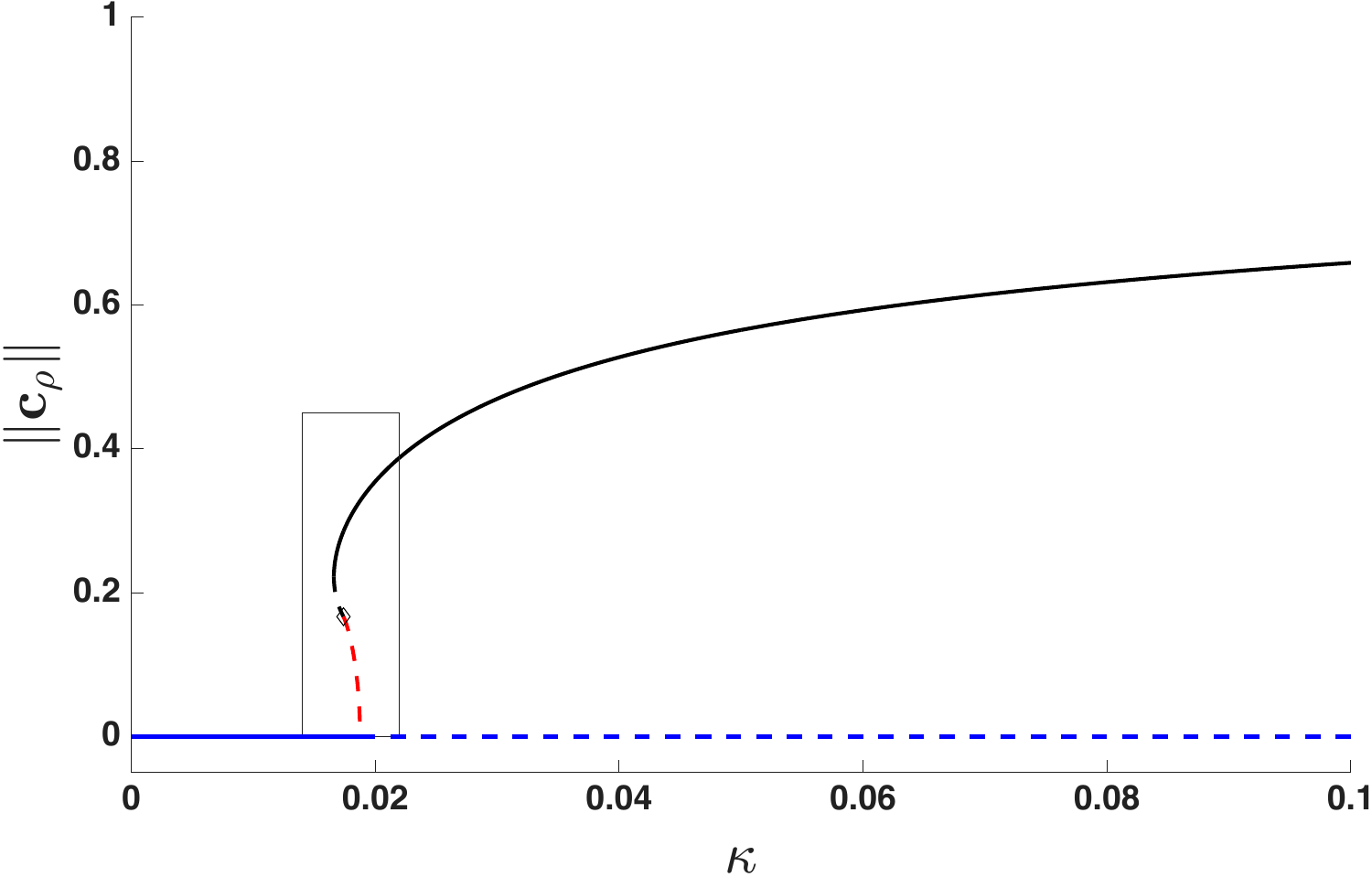} & 
 \includegraphics[width=0.48\textwidth]{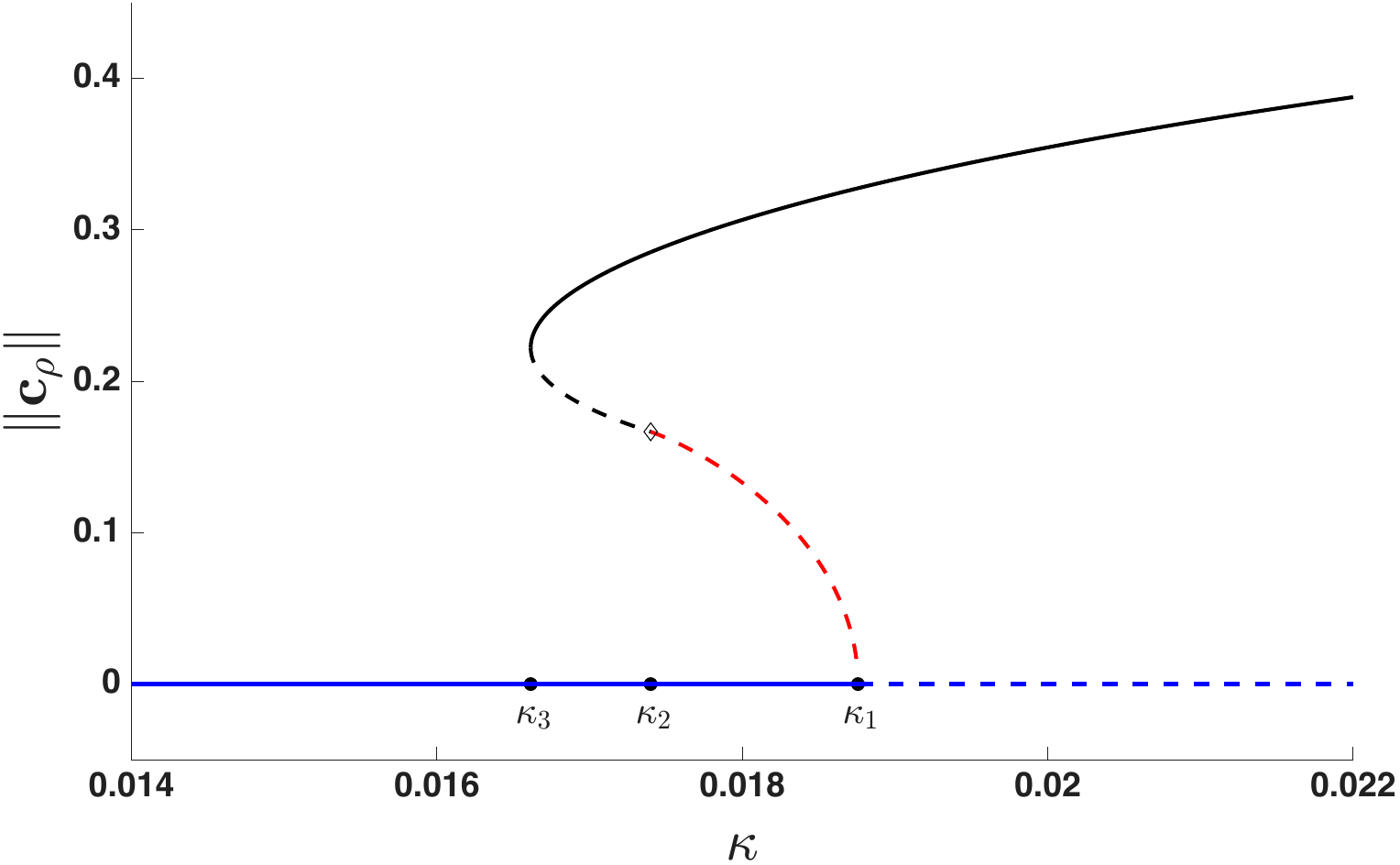} \\
 (a) $0<\kappa<0.1$ & (b) inset of rectangle from plot (a)
\end{tabular}
\caption{Case $\dm =2$, $m>2$. Plot of the norm of the centre of mass of  
$\rhou$ (blue), $\rho_{\kappa,1}$ (solid black) and $\rho_{\kappa,2}$ (dashed black for $\kappa_3<\kappa<\kappa_2$, and dashed red for $\kappa_2<\kappa<\kappa_1$) -- see Proposition \ref{prop:bif-mg2}. Plot (b) is the inset of the rectangle from plot (a). At $\kappa=\kappa_3$, a pair of equilibria ($\rho_{\kappa,1}$ and $\rho_{\kappa,2}$) with strict support in $\bbs^\dm$ gets born. The equilibrium $\rho_{\kappa,1}$ undergoes no further transitions and concentrates into a Dirac delta as $\kappa \to \infty$. The equilibrium $\rho_{\kappa,2}$ becomes fully supported at $\kappa = \kappa_2$ (transition indicated by a black diamond), and then merges with the uniform distribution at $\kappa=\kappa_1$. The numerical simulations correspond to $m=3.5$.}
\label{fig:m35-splot}
\end{center}
\end{figure}


\section{Existence of equilibria and bifurcation structure ($m>2$)}
\label{sect:existence-mg2}

From here on, this paper deals exclusively with the range $m>2$. In previous work \cite{FePaVa2025}, the existence and bifurcation structure of equilibria for the case $m>2$ were investigated; however, due to technical difficulties arising in general dimension, a complete classification was obtained only for the two-dimensional case ($\dm=2$). In this section, we extend these results to arbitrary dimensions $\dm\geq 2$ and provide a complete classification of equilibria for the case $m>2$.

The following theorem extends Proposition \ref{prop:bif-mg2} to arbitrary dimensions.

\begin{theorem}[Existence of equilibria]
\label{thm:equigenerald}
Proposition \ref{prop:bif-mg2} holds with an identical statement for general $\dm \geq 2$ and $m>2$.
\end{theorem}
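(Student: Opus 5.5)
The only ingredient of Proposition \ref{prop:bif-mg2} that depended on $\dm=2$ is the unimodality of $F$ in Lemma \ref{lem:F-monotone-mg2}. The statements about fully supported equilibria use only Lemma \ref{lem:H-monotone}, \eqref{eqn:Hlim} and \eqref{eqn:kappa2}, and these already hold for every $\dm\ge2$. So the plan is to prove that for every $\dm\geq 2$ and $m>2$ there is $\bar\phi\in(0,\pi)$ such that $F$ is increasing on $(0,\bar\phi)$ and decreasing on $(\bar\phi,\pi)$. We also need the boundary behaviour $F(0^+)=0$ and $F(\pi^-)=H(-1)=\kappa_2^{-1}$ from \eqref{eqn:HF-rel}. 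Once these are in hand, the counting of solutions of \eqref{eqn:kappa-phi} for $\kappa\in(\kappa_3,\kappa_2)$ and $\kappa>\kappa_2$, the definition \eqref{eqn:kappa3}, the continuity of the branch $\rho_{\kappa,2}$ at $\phi=\pi$ (equivalently $\eta=-1$), and the limit $\phi_{\kappa,1}\to0$ as $\kappa\to\infty$ all follow exactly as in Section \ref{subsect:equil-review}.

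To establish the unimodality, write $p=\frac{1}{m-1}\in(0,1)$, and set
\[
A(\phi)=\int_0^\phi(\cos\theta-\cos\phi)^p\sin^{\dm-1}\theta\,\d\theta,\qquad B(\phi)=\int_0^\phi(\cos\theta-\cos\phi)^p\sin^{\dm-1}\theta\cos\theta\,\d\theta .
\]
Then $\log F=\log B+(m-2)\log A+$const. Differentiating under the integral sign is legitimate because $p>0$ makes the boundary term vanish, and it gives $A'=p\sin\phi\,A_1$ and $B'=p\sin\phi\,B_1$, where $A_1,B_1$ are the same integrals with exponent $p-1$. Hence
\[
\operatorname{sign}F'(\phi)=\operatorname{sign}\Bigl(\frac{B_1}{B}+(m-2)\frac{A_1}{A}\Bigr)
\]
wherever $B>0$, and $B>0$ holds by a rearrangement argument, since $\cos\theta$ and $(\cos\theta-\cos\phi)^p$ are comonotone. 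The next step is the substitution $t=\frac{\cos\theta-\cos\phi}{1-\cos\phi}\in[0,1]$, or alternatively $u=\cos\theta$, which turns each integral into a Beta-type integral in $u\in[\cos\phi,1]$ with weight $(1-u^2)^{(\dm-2)/2}$. We then show that
\[
G(\phi):=\frac{B_1}{B}+(m-2)\frac{A_1}{A}
\]
has exactly one sign change on $(0,\pi)$, from positive to negative. Near $\phi=0$, small-angle asymptotics give $F(\phi)\sim c\,\phi^{\gamma}$ with $\gamma>0$, so $F$ is increasing there. Near $\phi=\pi$ we need $F'<0$. This should follow because $B\to$ a positive constant while $A_1$ and $B_1$ blow up near $\pi$ at the rate produced by the factor $(\cos\theta+1)^{p-1}$ integrated against $\sin^{\dm-1}\theta$, and that singular part enters $G$ with opposite signs through $\cos\theta\approx-1$. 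This requires care: for $\dm\ge2$ the integrals $A_1,B_1$ may actually stay finite, in which case the sign at $\pi$ has to be read off from an explicit evaluation. That evaluation is available, since at $\phi=\pi$ all integrals become Beta functions in $(1+u)$ and $(1-u)$.

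The main obstacle is uniqueness of the sign change of $G$ in general dimension; this is precisely the technical difficulty that blocked \cite{FePaVa2025}. The first approach I would try is to reparametrize with $s=\cos\phi$ and show that the function $s\mapsto \frac{B_1}{B}\big/\frac{A_1}{A}$, or a suitable combination of it, is monotone in $s$. If such a combination is monotone, $G=0$ can have only one root. To prove the monotonicity, I would write each ratio as the expectation of $\cos\theta$-type quantities under a one-parameter family of probability measures $\mu_s$ on $[s,1]$, with density proportional to $(u-s)^{q}(1-u^2)^{(\dm-2)/2}$. This family is totally positive of order two in $(u,s)$, so monotone likelihood ratio and variation-diminishing arguments in the style of Karlin should apply. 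If this does not suffice, the fallback is to show that $\phi\mapsto G(\phi)$ satisfies a first-order differential inequality at its zeros, namely $G'(\phi)<0$ whenever $G(\phi)=0$, which again forces a single crossing.
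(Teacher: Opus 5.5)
Your reduction matches the paper's first step. Only the unimodality of $F$ needs a new proof, and your identity $\frac{\mathrm{d}}{\mathrm{d}\phi}\ln F=p\sin\phi\,\bigl(B_1/B+(m-2)A_1/A\bigr)$ is \eqref{eqn:dlnFdtheta-1}. Your endpoint analysis at $\pi$ also works once you drop the blow-up heuristic. For $d\ge2$ the integrals $A_1,B_1$ stay bounded, and the Beta evaluation gives $B_1/B+(m-2)A_1/A\to-\frac{(m-1)(m-2)}{(d-2)(m-1)+2}<0$, which is Lemma \ref{lem:Gnearpi} up to the positive factor $s(\pi^-)$. Near $0$, an asymptotic $F\sim c\,\phi^\gamma$ does not by itself give $F'>0$; argue through the log-derivative instead.

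The genuine gap is the central step, uniqueness of the sign change, which you leave as two untested plans.

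\begin{itemize}
\item The total-positivity route does not apply as stated. TP2 of $(u-c)_+^q$ gives monotonicity in $c$ of a single family of means; for example, it shows that $B/A$ decreases, which the paper obtains from Cauchy--Schwarz. Your sign function, however, mixes two kernels, $(u-c)^p$ and $(u-c)^{p-1}$. Equivalently, it asks whether the ratio of two such means crosses the level $-(m-2)$ only once, and TP2 alone gives no bound on that.
\item The fallback, $G'<0$ at every zero, is exactly what the paper does, but you give no way to compute $G'$. Differentiating $A_1$ or $B_1$ under the integral produces the exponent $p-2<-1$, which is not integrable, and the boundary term is infinite. As written, your sign function has no usable derivative formula.
\end{itemize}

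The missing idea is that the system of integrals closes. Write $\cos\theta=(\cos\theta-\cos\phi)+\cos\phi$, and integrate by parts (the boundary terms vanish because $p>0$). This gives $B_1=\cos\phi\,A_1+A$, $C_1=\cos\phi\,B_1+B$ and $A_1-C_1=d(m-1)B$, where $C_1$ carries an extra factor $\cos^2\theta$. Hence $A_1\sin^2\phi=\cos\phi\,A+(1+d(m-1))B$, and everything becomes rational in $\phi$ and the single ratio $s=B/A$.

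First, $\mathrm{sgn}\,F'=\mathrm{sgn}\,G$, where $G:=s\sin^2\phi\,\bigl(B_1/B+(m-2)A_1/A\bigr)=1+(m-1)(d+1)s\cos\phi+(m-2)(1+d(m-1))s^2$. Second, $s$ obeys the closed Riccati-type equation $(m-1)\sin\phi\,s'=1+d(m-1)s\cos\phi-(1+d(m-1))s^2$. At a zero $\phi_0$ of $G$ you can then eliminate $\cos\phi_0$ and $s'(\phi_0)$ to obtain
\[
G'(\phi_0)=-\frac{(m-1)(d+1)(m-2)\,\bigl((m-1)(d-1)+2\bigr)\sin\phi_0\, s(\phi_0)^3}{1-(m-2)^2 s(\phi_0)^2}.
\]
Its sign needs one more input, $s(\phi_0)<1/(m-2)$. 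This follows from $\cos\phi_0>-1$ together with $s>1/(1+d(m-1))$ on $(0,\pi)$, which comes from the monotonicity of $s$ and its limit at $\pi$ (Lemma \ref{lem:negativeprime}).

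Without this reduction to the $(\phi,s)$ variables, your proposal does not yet show that $F$ has a single critical point. That single critical point is the entire new content of Theorem \ref{thm:equigenerald}.
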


As discussed in Section \ref{subsubsect:prelim:mg2}, the existence of fully supported equilibria reduces to finding solutions to \eqref{eqn:kappa-eta} in the range $\eta \leq -1$. The existence and uniqueness of such solutions  can be inferred from the monotonicity of the function $H(\eta)$ from \eqref{eqn:H}. Note that by Lemma \ref{lem:H-monotone} (see also Figure \ref{fig:HF-mg2}(a)), $H(\eta)$ is strictly increasing for $m>2$, in {\em any} dimension $\dm \geq 2$. For this reason, all considerations from Proposition \ref{prop:bif-mg2} that refer to fully supported equilibria extend immediately to general dimension.

To prove Theorem~\ref{thm:equigenerald}, it only remains to address the  equilibria of strict support in general dimension. The existence of such equilibria follows from solving \eqref{eqn:kappa-phi}, which yields the possible values $\phi$ of the size of their support. For $\dm=2$, the monotonicity of the function $F(\phi)$ was established in Lemma \ref{lem:F-monotone-mg2} (see also Figure \ref{fig:HF-mg2}(b)), which then enables straight away the conclusion of Proposition \ref{prop:bif-mg2}. Following an identical argument, to prove Theorem \ref{thm:equigenerald} it only remains to show that $F(\phi)$ has the same monotonicity properties in any $\dm \geq 2$ as it has for $\dm=2$. 

For convenience of calculations, we make the following notations throughout this paper. For an equilibrium $\rho$, we define
\begin{equation}
\label{eqn:ABC}
\begin{cases}
\displaystyle A(\rho)=\int_{\supp}\rho(x)^{2-m}\dx,\quad \\[2pt]
\displaystyle B(\rho)=\int_{\supp}\rho(x)^{2-m}(x_0\cdot x)\dx,\quad \\[2pt]
\displaystyle C(\rho)=\int_{\supp}\rho(x)^{2-m}(x_0\cdot x)^2\dx.
\end{cases}
\end{equation}
There are various relationships between $A(\rho)$, $B(\rho)$, and $C(\rho)$ that we derive and use in our analysis; the supporting calculations for these relationships are presented in Appendices~\ref{appendix:fs} and \ref{appendix:ps} for fully supported and strictly supported $\rho$, respectively. 

For the arguments that follow, it is more convenient to use notation \eqref{eqn:phi-notation} and write the strictly supported equilibrium \eqref{eqn:equil-cs}  as
\begin{align}
\label{eqn:equil-cs-eta}
\rho(x)=\begin{cases}
\displaystyle\left(\frac{(m-1)\kappa\|c_\rho\|}{m}\right)^{\frac{1}{m-1}}(\cos\theta_x-\cos\phi)^{\frac{1}{m-1}},&\quad\text {if }0\leq \theta_x\leq\phi,\\
0,&\quad \text{otherwise}.
\end{cases}
\end{align}
Furthermore, conditions \eqref{eqn:system-cs-mg1} can be rewritten as
\begin{subequations}
\label{eqn:equil-cs-1s}
\begin{align}
1&=\dm w_\dm \left(\frac{(m-1)\kappa \|c_\rho\|}{m}\right)^{\frac{1}{m-1}}\int_0^\phi\left(\cos\theta-\cos\phi\right)^{\frac{1}{m-1}}\sin^{\dm-1}\theta \, \d\theta, \label{eqn:equil-cs-1}\\
\|c_\rho\|& =\dm w_\dm \left(\frac{(m-1)\kappa \|c_\rho\|}{m}\right)^{\frac{1}{m-1}}\int_0^\phi\left(\cos\theta-\cos\phi\right)^{\frac{1}{m-1}}\sin^{\dm-1}\theta \cos\theta \, \d\theta.\label{eqn:equil-cs-s}
\end{align}
\end{subequations}

\begin{lemma}
\label{lemma:ABC-cs}
Let $\rho$ be a strictly supported equilibrium in the form \eqref{eqn:equil-cs-eta}. Then there exist the following relationships between $A(\rho)$, $B(\rho)$ and $C(\rho)$ defined in \eqref{eqn:ABC}:
\begin{align}
\label{ABC-system}
\begin{cases}
\displaystyle B(\rho)=\frac{m}{(m-1)\kappa\|c_\rho\|}+\cos\phi A(\rho),\vspace{0.3cm}\\
\displaystyle C(\rho)=\frac{m}{(m-1)\kappa}+\cos\phi B(\rho),\vspace{0.3cm}\\
\displaystyle A(\rho)-C(\rho)=\frac{\dm m}{\kappa}.
\end{cases}
\end{align}
Also, we can solve the above system of linear equations to get
\begin{align}\label{eqn:cs-A}
A(\rho)=\frac{m}{\kappa\sin^2\phi}\left(\frac{1}{m-1}+\frac{\cos\phi}{(m-1)\|c_\rho\|}+\dm\right).
\end{align}
\end{lemma}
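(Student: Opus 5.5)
The plan is to use the explicit form \eqref{eqn:equil-cs-eta} and write everything in terms of a single constant. Set $K:=\frac{(m-1)\kappa\|c_\rho\|}{m}$ and $g(\theta):=(\cos\theta-\cos\phi)^{\frac{1}{m-1}}$. On $\supp=\{0\le\theta_x\le\phi\}$ we then have $\rho^{m-1}=K(\cos\theta_x-\cos\phi)$. Consequently
\[
\rho^{2-m}=\frac{\rho}{K(\cos\theta_x-\cos\phi)},
\qquad
\rho^{2-m}=K^{\frac{2-m}{m-1}}(\cos\theta_x-\cos\phi)^{\frac{2-m}{m-1}}.
\]
The exponent $\frac{2-m}{m-1}$ is $>-1$, so $A(\rho)$, $B(\rho)$ and $C(\rho)$ are finite, even though $\rho^{2-m}$ blows up at the edge of the support when $m>2$. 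Throughout, $x_0\cdot x=\cos\theta_x$.

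The first two relations in \eqref{ABC-system} are purely algebraic. Using the first expression for $\rho^{2-m}$:
\[
B(\rho)-\cos\phi\,A(\rho)=\int_{\supp}\rho^{2-m}(\cos\theta_x-\cos\phi)\dx=\frac1K\int\rho\,\dx=\frac1K=\frac{m}{(m-1)\kappa\|c_\rho\|},
\]
and
\[
C(\rho)-\cos\phi\,B(\rho)=\frac1K\int\rho\cos\theta_x\dx=\frac{\|c_\rho\|}{K}=\frac{m}{(m-1)\kappa}.
\]
Here we use unit mass and $c_\rho\cdot x=\|c_\rho\|\cos\theta_x$, as in \eqref{eqn:cm-sq}.

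The third relation is the only real step, and it is an integration by parts in hyperspherical coordinates. We have
\[
A(\rho)-C(\rho)=\dm w_\dm K^{\frac{2-m}{m-1}}\int_0^\phi(\cos\theta-\cos\phi)^{\frac{2-m}{m-1}}\sin^{\dm+1}\theta\,\d\theta.
\]
Since $g'(\theta)=-\frac{1}{m-1}(\cos\theta-\cos\phi)^{\frac{2-m}{m-1}}\sin\theta$, the integral equals $-(m-1)\int_0^\phi g'(\theta)\sin^\dm\theta\,\d\theta$. Integrating by parts, both boundary terms vanish because $g(\phi)=0$ and $\sin 0=0$, and this yields $(m-1)\dm\int_0^\phi g\sin^{\dm-1}\theta\cos\theta\,\d\theta$. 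By \eqref{eqn:equil-cs-s}, this last integral equals $\|c_\rho\|\big/\big(\dm w_\dm K^{\frac{1}{m-1}}\big)$. Therefore
\[
A(\rho)-C(\rho)=\frac{(m-1)\dm\|c_\rho\|}{K}=\frac{\dm m}{\kappa}.
\]
The one delicate point is justifying the integration by parts with a singular integrand. I would do it on $[0,\phi-\varepsilon]$ and let $\varepsilon\to0$: the boundary term is $g(\phi-\varepsilon)\sin^\dm(\phi-\varepsilon)\to0$, and the integrals converge by integrability.

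Finally, I solve the linear system. Substituting the first relation into the second gives
\[
C=\frac{m}{(m-1)\kappa}+\frac{m\cos\phi}{(m-1)\kappa\|c_\rho\|}+\cos^2\phi\,A.
\]
The third relation then gives $A\sin^2\phi=\frac{\dm m}{\kappa}+\frac{m}{(m-1)\kappa}+\frac{m\cos\phi}{(m-1)\kappa\|c_\rho\|}$, which is \eqref{eqn:cs-A}. Division by $\sin^2\phi$ is legitimate since $0<\phi<\pi$.
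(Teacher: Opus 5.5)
Your proof is correct and follows the paper's argument in Appendix~\ref{appendix:ps}. The first two identities come from splitting off the factor $\cos\theta_x-\cos\phi$ and invoking the unit-mass and centre-of-mass constraints; your identity $\rho^{2-m}(\cos\theta_x-\cos\phi)=\rho/K$ is a coordinate-free version of the paper's decomposition $\cos\theta=(\cos\theta-\cos\phi)+\cos\phi$. The third identity comes from the same integration by parts against $\sin^{\dm}\theta$ followed by \eqref{eqn:equil-cs-s}, and \eqref{eqn:cs-A} from solving the linear system. Your truncation at $\phi-\varepsilon$ is a correct justification of the integration by parts with the singular integrand, a detail the paper leaves implicit.
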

\begin{proof}
See Appendix \ref{appendix:ps}.
\end{proof}

The following proposition is the generalization of Lemma \ref{lem:F-monotone-mg2} to general dimension, which is the missing ingredient to conclude Theorem \ref{thm:equigenerald}.

\begin{proposition}
\label{prop:sgnFp}
Let $m>2$, $\dm \geq 2$, and consider the function $F(\phi)$ defined on $0<\phi<\pi$, given by \eqref{eqn:F}. Then, there exists a unique $\bar{\phi} \in (0,\pi)$ such that $F'(\bar{\phi})=0$, where $F'(\phi)>0$ on $(0,\bar{\phi})$ and $F'(\phi)<0$ on $(\bar{\phi},\pi)$.
\end{proposition}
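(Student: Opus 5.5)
The plan is to reduce the sign of $F'$ to an explicit algebraic expression in $\phi$ and $s:=\|c_\rho\|$ using Lemma \ref{lemma:ABC-cs}, and then study that expression. Write $p=\frac{1}{m-1}\in(0,1)$, and set
\[
I_0(\phi)=\int_0^\phi(\cos\theta-\cos\phi)^p\sin^{\dm-1}\theta\,\d\theta,\qquad I_1(\phi)=\int_0^\phi(\cos\theta-\cos\phi)^p\sin^{\dm-1}\theta\cos\theta\,\d\theta .
\]
Then $F=\frac{m-1}{m}(\dm w_\dm)^{m-1}I_1I_0^{m-2}$ and $s=I_1/I_0$ by \eqref{eqn:s-phi}.

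\textbf{Step 1: express $F'/F$ through $A$ and $B$.} Since $p>0$, the integrand vanishes at $\theta=\phi$, so there is no boundary term when differentiating. This gives
\[
I_k'(\phi)=p\sin\phi\int_0^\phi(\cos\theta-\cos\phi)^{p-1}\sin^{\dm-1}\theta\cos^k\theta\,\d\theta,\qquad k=0,1 .
\]
Here $p-1=\frac{2-m}{m-1}$. By \eqref{eqn:equil-cs-eta}, $(\cos\theta-\cos\phi)^{p-1}$ is a positive multiple of $\rho^{2-m}$, with the same constant for $k=0,1$. Therefore $I_0'$ and $I_1'$ are the same positive multiple of $A(\rho)$ and $B(\rho)$ from \eqref{eqn:ABC}. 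The normalization \eqref{eqn:equil-cs-1} fixes $I_0$ in terms of $\kappa s$, and $I_1=sI_0$. Together these give
\[
\frac{F'}{F}=\frac{I_1'}{I_1}+(m-2)\frac{I_0'}{I_0}=c(\phi)\Big(\frac{B}{s}+(m-2)A\Big),\qquad c(\phi)>0,
\]
where $\kappa=F(\phi)^{-1}$ is the value of $\kappa$ tied to $\phi$.

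\textbf{Step 2: eliminate $A$ and $B$.} First substitute $B=\frac{m}{(m-1)\kappa s}+\cos\phi\,A$ and then \eqref{eqn:cs-A}. After multiplying by the positive factor $\kappa\sin^2\phi/m$, the sign of $F'$ equals the sign of
\[
G(\phi)=\frac{\sin^2\phi}{(m-1)s^2}+\Big(m-2+\frac{\cos\phi}{s}\Big)\Big(\frac{1}{m-1}+\frac{\cos\phi}{(m-1)s}+\dm\Big).
\]
Here $s=s(\phi)$ is given by \eqref{eqn:s-phi}, and $\kappa$ has dropped out. With $t=\cos\phi/s$ this becomes
\[
G=\frac{\sin^2\phi}{(m-1)s^2}+(m-2+t)\Big(\tfrac{1}{m-1}+\dm+\tfrac{t}{m-1}\Big).
\]

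\textbf{Step 3: endpoint behaviour.} As $\phi\to0$, the measure concentrates at $x_0$, so $s\to1$ and $t\to1$, and $G\to(m-1)\big(\frac{2}{m-1}+\dm\big)>0$. At $\phi=\pi$, $\sin\phi=0$ and $t=-1/s(\pi)$. Here $s(\pi)$ is the centre-of-mass norm of the fully supported profile with $\eta=-1$, i.e. $(1+\cos\theta)^p$. I will compute $s(\pi)$ explicitly via Beta functions: substituting $u=(1+\cos\theta)/2$ reduces $I_0$ and $I_1$ to Beta integrals, which should give $s(\pi)=\frac{p}{p+\dm}$. Then $t=-\frac{p+\dm}{p}=-1-\dm(m-1)$. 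This makes the second factor vanish at $\phi=\pi$, and the first factor $m-2+t=-1-(\dm-1)(m-1)-\dots$ is negative. So a first-order expansion near $\pi$ is needed to confirm $G<0$ just below $\pi$. This is the borderline computation I must check carefully.

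\textbf{Step 4: uniqueness of the sign change (main obstacle).} To get a single zero $\bar\phi$, I would first prove that $s(\phi)$ is decreasing. This follows because $s$ is the mean of $\cos\theta$ under a family of weights whose likelihood ratio is monotone in $\phi$, by a standard covariance/Chebyshev argument. Consequently $t=\cos\phi/s$ is monotone on the region where $\cos\phi<0$. I would then try to show transversality: at any zero of $G$, $G'<0$. The derivative $s'(\phi)$ can itself be written via $A$, $B$, $C$; using the system \eqref{ABC-system} again, $s'$ becomes explicit in $\phi$, $s$. This turns $G'$ at a zero into an algebraic inequality in $(\cos\phi,s)$, to be verified using the a priori bounds $\cos\phi<s<1$.

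If transversality proves too messy, the fallback is to split the interval. For $\phi\le\pi/2$, $t\ge0$ and $G>0$ trivially, so every zero lies in $(\pi/2,\pi)$. On that interval I would show that $G/\sin^2\phi$, or a suitable rearrangement of it, is monotone by combining the monotonicity of $t$ with the two-sided bounds on $s$. I expect this final inequality to be the hardest part; it is where the general-$\dm$ difficulty noted in \cite{FePaVa2025} lies.
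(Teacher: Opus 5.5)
Your Steps 1--2 are the paper's reduction. Your $G$ is the paper's $G=\sin^2\phi+(\cos\phi+(m-2)s)(\cos\phi+Ns)$, with $N=1+d(m-1)$, divided by $(m-1)s^2$, so the zeros and the sign of $G'$ at the zeros agree. Your values $G(0^+)>0$ and $s(\pi^-)=1/N$ are correct. The plan for Steps 3--4 is also the paper's: sign just below $\pi$, monotonicity of $s$, an explicit ODE for $s$, and $G'<0$ at every zero.

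\textbf{The gap.} The two computations that carry the proof are left undone, and the tool you name for one of them does not work.

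\textbf{Transversality.} The relations \eqref{ABC-system} give $(m-1)\sin\phi\,s'=1+d(m-1)s\cos\phi-Ns^2$. At a zero, $1+(m-1)(d+1)s\cos\phi_0+(m-2)Ns^2=0$ lets you eliminate $\cos\phi_0$, and also $\sin^2\phi_0=(1-\cos\phi_0)(1+\cos\phi_0)$. This gives
\[
G'(\phi_0)=-\frac{(m-1)(d+1)(m-2)\big((d-1)(m-1)+2\big)\,s^3\sin\phi_0}{1-(m-2)^2s^2},\qquad s=s(\phi_0).
\]
So you need $s(\phi_0)<\frac{1}{m-2}$.
\begin{itemize}
\item The bounds $\cos\phi<s<1$ that you propose cannot deliver this once $m>3$. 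Moreover, $\cos\phi_0<s$ holds automatically under the zero relation, so it carries no information.
\item What works is $\cos\phi_0>-1$. Inserted into the zero relation, using $s>0$, it gives $(m-2)N\big(s-\frac{1}{m-2}\big)\big(s-\frac{1}{N}\big)<0$, hence $s(\phi_0)<\frac1{m-2}$.
\end{itemize}

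\textbf{The endpoint $\pi$.} You correctly note that both terms of $G$ vanish there, but you give no way to decide the sign below $\pi$.
\begin{itemize}
\item The missing input is $L=\lim_{\phi\to\pi^-}s'(\phi)/\sin\phi$. It comes from the same ODE by l'H\^opital: $L=-\frac{d(m-1)}{N((d-2)(m-1)+2)}$.
\item With $L$, in the paper's normalization, $\lim_{\phi\to\pi^-}G/\sin^2\phi=-\frac{(m-1)(m-2)}{N((d-2)(m-1)+2)}$. This is negative exactly because $m>2$.
\item Without this limit you have no sign change, and so no zero of $G$ at all.
\end{itemize}

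Once both computations are done, the conclusion needs no fallback. Zeros are isolated, and two consecutive zeros with $G'<0$ would force another zero between them. So there is exactly one zero $\bar\phi$, with $G>0$ before it and $G<0$ after it.
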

\begin{proof}
Using \eqref{eqn:F} we compute
\begin{equation}
\label{eqn:dlnFdtheta-1}
\begin{aligned}
\frac{\d}{\d\phi}\ln F(\phi)&=\frac{\sin\phi}{m-1}\left(\frac{\int_0^\phi (\cos\theta-\cos\phi)^{\frac{1}{m-1}-1}\sin^{\dm-1}\theta \cos\theta\d\theta}{\int_0^\phi (\cos\theta-\cos\phi)^{\frac{1}{m-1}}\sin^{\dm-1}\theta \cos\theta\d\theta} \right. \\
&\quad \left. +(m-2)\frac{\int_0^\phi (\cos\theta-\cos\phi)^{\frac{1}{m-1}-1}\sin^{\dm-1}\theta \d\theta}{\int_0^\phi (\cos\theta-\cos\phi)^{\frac{1}{m-1}}\sin^{\dm-1}\theta \d\theta}
\right)\\
&=\frac{\sin\phi}{m-1}\left(
\frac{B(\rho)}{C(\rho)-\cos\phi B(\rho)}+(m-2)\frac{A(\rho)}{B(\rho)-\cos\phi A(\rho)}
\right),
 \end{aligned}
 \end{equation}
where for the second equal sign we used \eqref{eqn:ABC} and \eqref{eqn:equil-cs-eta}; in particular for the denominators we wrote
\[
(\cos\theta-\cos\phi)^{\frac{1}{m-1}} = (\cos\theta-\cos\phi)^{\frac{2-m}{m-1}} \, (\cos\theta-\cos\phi).
\]
From \eqref{ABC-system}, we can further simplify \eqref{eqn:dlnFdtheta-1} into
\begin{align*}
\frac{\d}{\d\phi}\ln F(\phi)&=\frac{\sin\phi}{m-1}\left(
\frac{(m-1)\kappa B(\rho)}{m}+\frac{(m-2)(m-1)\kappa \|c_\rho\|A(\rho)}{m}
\right)\\
&=\frac{\kappa \sin\phi}{m}\left(B(\rho)+(m-2)\|c_\rho\|A(\rho)\right).
\end{align*}

Since $F(\phi)>0$ and $\frac{\kappa\sin\phi}{m}>0$, we infer that
\begin{equation}
\label{eqn:sgnF-1}
\mathrm{sgn}\left(F'(\phi)\right)=\mathrm{sgn}\left(B(\rho)+(m-2)\|c_\rho\|A(\rho)\right).
\end{equation}
Now, we use \eqref{ABC-system} (first equation) and \eqref{eqn:cs-A} to get
\begin{align*}
B(\rho)+(m-2)\|c_\rho\|A(\rho)&=\frac{m}{(m-1)\kappa\|c_\rho\|}+\cos\phi A(\rho)+(m-2)\|c_\rho\|A(\rho)\\
&=\frac{m}{(m-1)\kappa\|c_\rho\|}+\left(\cos\phi+(m-2)\|c_\rho\|\right)A(\rho)\\
&=\frac{m}{(m-1)\kappa\|c_\rho\|} \\
& \quad +\left(\cos\phi+(m-2)\|c_\rho\|\right)\times\frac{m}{\kappa\sin^2\phi}\left(\frac{1}{m-1}+\frac{\cos\phi}{(m-1)\|c_\rho\|}+\dm\right)\\
&=\frac{m}{(m-1)\kappa\|c_\rho\|\sin^2\phi} \\
& \quad \times \left(
\sin^2\phi+(\cos\phi+(m-2)\|c_\rho\|)\left(\|c_\rho\|+\cos\phi+\dm(m-1)\|c_\rho\|\right)
\right).
\end{align*}
Since $\frac{m}{(m-1)\kappa\|c_\rho\|\sin^2\phi}>0$, we combine the above with \eqref{eqn:sgnF-1} to get
\begin{equation}
\label{eqn:sgnF-2}
\mathrm{sgn}\left(F'(\phi)\right)=\mathrm{sgn}\left(
\sin^2\phi+(\cos\phi+(m-2)\|c_\rho\|)\left(\|c_\rho\|+\cos\phi+\dm(m-1)\|c_\rho\|\right)
\right).
\end{equation}

Next, we investigate the r.h.s. of \eqref{eqn:sgnF-2}. Some of the supporting calculations will be presented in Appendix \ref{appendix:signG}, but we summarize the main steps here. Dividing \eqref{eqn:equil-cs-s} by \eqref{eqn:equil-cs-1}, we find the following expression for$\|c_\rho\|$:
\begin{align}\label{def:s}
\| c_\rho \|= s(\phi) :=
\frac{ \int_0^\phi
(\cos\theta-\cos\phi)^{\frac{1}{m-1}}
\sin^{d-1}\theta \cos\theta\d\theta}
{\int_0^\phi
(\cos\theta-\cos\phi)^{\frac{1}{m-1}}
\sin^{d-1}\theta\d\theta}.
\end{align}
Also denote (see r.h.s. of \eqref{eqn:sgnF-2}):
\begin{equation}
\label{eqn:G}
G(\phi)=\sin^2\phi+
(\cos\phi+(m-2)s(\phi))
(s(\phi)+\cos\phi+d(m-1)s(\phi)),
\end{equation}
for all $\phi\in(0, \pi)$. 

We want to show that the equation $G(\phi)=0$ has a unique solution $\bar{\phi}\in(0,\pi)$ such that
\begin{equation}
\label{eqn:signG}
\begin{cases}
G(\phi)>0,\qquad\forall \phi\in(0, \bar{\phi}),\\
G(\phi)<0,\qquad\forall \phi\in(\bar{\phi}, \pi).
\end{cases}
\end{equation}
The claim in the proposition follows then from \eqref{eqn:signG} together with \eqref{eqn:sgnF-2}.

The key results proved in Appendix \ref{appendix:signG} are:

a) Lemma \ref{lem:negativeprime}: Let $G(\phi_0)=0$ for some $\phi_0\in(0, \pi)$. Then, $G'(\phi_0)<0$.

b) Lemma \ref{lem:Gnearpi}: The function $G(\phi)$ satisfies
\[
\lim_{\phi\to\pi-}\frac{G(\phi)}{\sin^2\phi}<0.
\]

\smallskip
With these two lemmas established, to prove \eqref{eqn:signG} we argue as follows.
\smallskip

\noindent {(\it Existence of a zero of $G$)} We know (see \eqref{slimvalue}) that
\[
\lim_{\phi\to0+}s(\phi)=1,
\]
which implies
\[
\lim_{\phi\to0+}G(\phi)=(m-1)(2+\dm(m-1))>0.
\]
Furthermore, by Lemma \ref{lem:Gnearpi}, there exists $\epsilon>0$ such that
\[
G(\phi)<0, \quad \text{ for all }\pi-\epsilon<\phi<\pi.
\]
Therefore, by the intermediate value property there exists $\phi_0\in(0, \pi)$ satisfying $G(\phi_0)=0$.\\

\noindent {(\it Uniqueness of the zero of $G$)} To prove that there is a  unique $\bar{\phi} \in (0,\pi)$ such that $G(\bar{\phi})=0$, let $Z$ be the set of zeros of $G$ in $(0,\pi)$. For any $\phi\in Z$, by Lemma \ref{lem:negativeprime}) we have
\[
G(\phi)=0\qquad\text{and}\qquad G'(\phi)<0.
\]
Therefore any element of $Z$ is an isolated point. For this reason, we can consider two consecutive elements $\phi_1 < \phi_2$ of $Z$, i.e., $Z\cap(\phi_1,\phi_2)=\emptyset$. Then, we have
\[
G(\phi_1)=G(\phi_2)=0,\quad G'(\phi_1)<0,\quad G'(\phi_2)<0.
\]
From the above, we infer that there exists $\delta\in(0, \phi_2-\phi_1)$ such that
\[
G(\phi)<0, \quad\forall \phi\in(\phi_1,\phi_1+\delta), \quad \text { and } \quad G(\phi)>0, \quad\forall \phi\in(\phi_2-\delta, \phi_2).
\]
By the intermediate value property, we know then that there exists $\phi_3\in(\phi_1+\delta, \phi_2-\delta)$ such that
\[
G(\phi_3)=0,
\]
which yields a contradiction. We can conclude that there exists a unique zero $\bar{\phi}\in(0, \pi)$ of $G$.

\noindent {(\it Concluding step)} From
\[
\lim_{\phi\to0+}G(\phi)>0, \quad \text{ the fact that } G(\phi)<0 \text{ for all } \phi\in(\pi-\epsilon,\pi),
\]
and the uniqueness of $\bar{\phi}$ satisfying $G(\bar{\phi})=0$, we conclude \eqref{eqn:signG}.
\end{proof}

Once the monotonicity of $F(\phi)$ has been established by Proposition \ref{prop:sgnFp}, the existence of strictly supported equilibria $\rho_{\kappa,1}$ and $\rho_{\kappa,2}$ (see statement of Proposition \ref{prop:bif-mg2}) can be inferred immediately. Indeed, recall that $\kappa_2 = (F(\pi))^{-1}$ and $\kappa_3 = (F(\bar{\phi}))^{-1}$, where $\kappa_2>\kappa_3$. For any $\kappa_3<\kappa<\kappa_2$, the equation \eqref{eqn:kappa-phi} has two solutions: $\phi_{\kappa,1} \in (0, \bar{\phi})$ and $\phi_{\kappa,2} \in (\bar{\phi},\pi)$ -- see Figure \ref{fig:HF-mg2}(b). On the other hand, for $\kappa >\kappa_2$, there exists only one solution $\phi_{\kappa,1} \in (0, \bar{\phi})$ of \eqref{eqn:kappa-phi}; see the magenta dash-dotted line in Figure \ref{fig:HF-mg2}(b). All the considerations above conclude now the proof of Theorem \ref{thm:equigenerald}.

\section{First order stability}
\label{sect:first-order}
In this section, we will investigate the first order stability of all possible equilibria. By \eqref{eqn:fvE}, an equilibrium density $\rho$ satisfies
\begin{align}\label{eqn:FV}
\frac{m}{m-1}\rho(x)^{m-1}-\kappa~ c_\rho\cdot x=\lambda_i,
\qquad
\forall x\in\mathcal{C}_i,\quad i\in\mathcal{I},
\end{align}
where $\{\mathcal{C}_i\}_{i\in\mathcal{I}}$ represents the collection of the connected components of $\supp$, i.e.,
\[
\bigcup_{i\in\mathcal{I}}\mathcal{C}_i=\supp,
\]
where each $\mathcal{C}_i$ is connected, and the sets $\mathcal{C}_i$ are disjoint.

The next lemma shows that unless all the values $\lambda_i$ are equal, the equilibrium is unstable.
\begin{lemma}
\label{lem:constant-lambda}
Let $\rho$ be an equilibrium satisfying \eqref{eqn:FV}. If there exists $j_1,j_2\in\mathcal{I}$ satisfying $\lambda_{j_1}\neq \lambda_{j_2}$, then $\rho$ is unstable.
\end{lemma}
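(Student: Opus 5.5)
We construct an explicit mass-preserving perturbation that moves a small amount of mass from the component with the larger value of $\lambda$ to the component with the smaller value, and check that the energy strictly decreases to first order. This shows that $\rho$ is not a local minimizer, i.e. it is unstable.

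Without loss of generality, assume $\lambda_{j_1}>\lambda_{j_2}$. Choose nonnegative bounded functions $f_1$ supported in $\calC_{j_1}$ and $f_2$ supported in $\calC_{j_2}$, each of unit mass. For definiteness, take $f_1=\rho\,\mathbbm{1}_{\calC_{j_1}}/\int_{\calC_{j_1}}\rho$, and similarly for $f_2$. With this choice $f_1\le C\rho$ on $\calC_{j_1}$. Define
\[
\delta\rho = f_2 - f_1, \qquad \rho^\epsilon=\rho+\epsilon\,\delta\rho .
\]
For $0<\epsilon$ small, $\rho^\epsilon\ge 0$, since we only remove a fraction $\epsilon C$ of $\rho$ on $\calC_{j_1}$. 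Moreover, $\rho^\epsilon$ has unit mass because $\int\delta\rho=0$. So $\rho^\epsilon\in\calP_{ac}(\bbs^\dm)$ is an admissible one-sided variation.

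Next we use the first-variation computation of Section \ref{subsect:cp}. Since $\delta\rho$ is supported in $\supp$, $\rho^{m-1}$ is bounded, and $m>1$, differentiation under the integral is justified. This gives
\[
\left.\frac{\d}{\d\epsilon}E[\rho^\epsilon]\right|_{\epsilon=0}=\int_{\bbs^\dm}\frac{\delta E}{\delta\rho}\,\delta\rho\,\dx .
\]
By \eqref{eqn:FV}, $\frac{\delta E}{\delta\rho}=\lambda_i$ on $\calC_i$. Hence the right-hand side equals
\[
\lambda_{j_2}\int f_2-\lambda_{j_1}\int f_1=\lambda_{j_2}-\lambda_{j_1}<0 .
\]
Therefore $E[\rho^\epsilon]<E[\rho]$ for all sufficiently small $\epsilon>0$. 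The deviation satisfies $\|\rho^\epsilon-\rho\|_{L^1}\le 2\epsilon$, and it is also small in $L^\infty$ and in Wasserstein distance. So $\rho$ is not a local minimizer, and it is unstable.

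The main point needing care is the regularity of the derivative in $\epsilon$. The map $\epsilon\mapsto\int(\rho^\epsilon)^m$ is $C^1$ on $[0,\epsilon_0)$ because $m>1$ and all functions involved are bounded. The interaction term is quadratic in $\epsilon$, so it poses no difficulty. A remainder estimate $E[\rho^\epsilon]=E[\rho]+\epsilon(\lambda_{j_2}-\lambda_{j_1})+o(\epsilon)$ then suffices.

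If the paper's notion of stability refers to nonnegativity of the first variation along admissible perturbations rather than to local minimality, the computed negative first variation gives the conclusion directly.
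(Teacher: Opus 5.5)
Your proposal is correct and matches the paper's proof: the paper uses the same perturbation $\psi=\bigl(\frac{1}{M_1}\chi_{\mathcal{C}_1}-\frac{1}{M_2}\chi_{\mathcal{C}_2}\bigr)\rho$. This perturbation moves mass from the component with the larger $\lambda$ to the one with the smaller $\lambda$, and the paper computes the one-sided first variation as the difference of the two $\lambda$ values, which is strictly negative. Your added justification for differentiating under the integral in $\epsilon$ makes explicit a step the paper takes for granted.
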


\begin{proof}
Let assume that there are two distinct connected components $\mathcal{C}_1$ and $\mathcal{C}_2$ of $\supp$ such that
\[
\frac{m}{m-1}\rho(x)^{m-1}-\kappa~ c_\rho\cdot x=\lambda_i
\qquad
\forall x\in\mathcal{C}_i,\quad i=1, 2.
\]
Furthermore assume that $\lambda_1\neq \lambda_2$, and without loss of generality, assume $\lambda_1<\lambda_2$. 

Define
\begin{equation}
\label{eqn:Mi}
M_i:=\int_{\mathcal{C}_i}\rho(x)\dx, \qquad\forall i=1, 2.
\end{equation}
Also, set 
\begin{equation}
\label{eqn:psi-connected}
\psi(x)=\left(\frac{1}{M_1}\chi_{\mathcal{C}_1}(x)-\frac{1}{M_2}\chi_{\mathcal{C}_2}(x)\right)\rho(x),
\end{equation}
where $\chi_{\mathcal{C}_i}$ denotes the characteristic function of the set $\mathcal{C}_i$. 

Note that since $\mathcal{C}_1$ and $\mathcal{C}_2$ are disjoint, we have $\int_{\bbs^\dm} \psi(x) \dx =0$. Then, for any $0\leq \epsilon<\min(M_1, M_2)$, the density
\[
\rho^\epsilon=\rho+\epsilon\psi
\]
is a probability distribution. The effect of the perturbation $\psi$ is to transfer mass from the component with larger $\lambda$ to the one with smaller $\lambda$, i.e., it transfers mass from $\mathcal{C}_2$ to $\mathcal{C}_1$. We will show that the energy strictly decreases by this mass transfer.

From
\[
\partial_\epsilon\rho^\epsilon=\psi(x), \\
\]
along with \eqref{eqn:Mi} and \eqref{eqn:psi-connected}, we get
\begin{align*}
\frac{\d}{\d\epsilon}E[\rho^\epsilon]\bigg|_{\epsilon=0+}&=\int_{\supp}\frac{\delta E}{\delta\rho}(x) \left(\frac{1}{M_1}\chi_{\mathcal{C}_1}(x)-\frac{1}{M_2}\chi_{\mathcal{C}_2}(x)\right)\rho(x)\dx\\[3pt]
&=\frac{1}{M_1}\int_{\mathcal{C}_1}\lambda_1\rho(x)\dx-\frac{1}{M_2}\int_{\mathcal{C}_2}\lambda_2\rho(x)\dx\\[3pt]
&=\lambda_1-\lambda_2<0,
\end{align*}
where we also used
\[
\frac{\delta E}{\delta \rho}(x)=\lambda_i, \qquad\forall x\in \mathcal{C}_i, \quad i=1, 2.
\]
Since the energy decays strictly in this direction, we  conclude that $\rho$ is unstable.
\end{proof}

By Lemma \ref{lem:constant-lambda}, any disconnected equilibria with nonidentical values of $\lambda$ in its components, is unstable. Therefore, we restrict our attention to equilibria that satisfy \eqref{eqn:EL}, for some constant $\lambda$.

Now note that since an equilibrium $\rho$ satisfying \eqref{eqn:EL} must be non-negative on its support, then necessarily we have
\begin{align}\label{eqn:suppcondi}
\supp\subseteq\left\{x\in\bbs^\dm:\kappa \, c_\rho\cdot x+\lambda\ge0\right\}.
\end{align}

In the following lemma, we show that $\rho$ satisfying \eqref{eqn:EL} is unstable if the inclusion in \eqref{eqn:suppcondi} is strict.
\begin{lemma}
\label{lem:supportcondi}
Let $\rho$ be an equilibrium satisfying \eqref{eqn:EL} for which the inclusion in \eqref{eqn:suppcondi} is strict. Then $\rho$ is unstable.
\end{lemma}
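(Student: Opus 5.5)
We follow the same template as in the proof of Lemma \ref{lem:constant-lambda}: we exhibit an admissible perturbation $\rho^\epsilon=\rho+\epsilon\psi$, with $\psi\geq -\rho/\epsilon_0$ and zero mass, along which the one-sided derivative $\frac{\d}{\d\epsilon}E[\rho^\epsilon]\big|_{\epsilon=0+}$ is strictly negative. Since the inclusion in \eqref{eqn:suppcondi} is strict, the set
\[
U:=\left\{x\in\bbs^\dm:\kappa\, c_\rho\cdot x+\lambda\geq 0\right\}\setminus\supp
\]
is nonempty. Because $\supp$ is closed, $U$ is relatively open in the closed cap $\{\kappa\, c_\rho\cdot x+\lambda\ge 0\}$. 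We will pick a point $\bar x\in U$ where the strict inequality $\kappa\, c_\rho\cdot \bar x+\lambda>0$ holds, together with a small geodesic ball $D\subset U$ around it, of positive measure, on which $\kappa\, c_\rho\cdot x+\lambda\geq \delta>0$.

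Finding such a point requires a short argument. If $c_\rho\neq 0$, the set where the equality $\kappa\, c_\rho\cdot x+\lambda=0$ holds is a codimension-one sphere. Hence any point of $U$ either already satisfies the strict inequality, or it lies on this boundary sphere and can be approximated by interior points of the cap, which stay outside the closed set $\supp$. If $c_\rho=0$, then \eqref{eqn:EL} gives $\rho$ constant on its support, and we need $\lambda>0$. Indeed, $\rho>0$ on a set of positive measure, so $\lambda=\frac{m}{m-1}\rho^{m-1}>0$, and then the cap is all of $\bbs^\dm$ and the same argument applies.

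For the perturbation, take
\[
\psi(x)=\frac{1}{|D|}\chi_D(x)-\rho(x).
\]
This has zero mass, and $\rho+\epsilon\psi\ge0$ for $\epsilon\in[0,1]$. Since $\rho=0$ on $D$, the entropy term contributes $\frac{1}{m-1}\epsilon^m|D|^{1-m}$ on $D$, whose derivative at $0^+$ vanishes because $m>1$. Off $D$, the first variation formula applies. We therefore get
\[
\frac{\d}{\d\epsilon}E[\rho^\epsilon]\Big|_{\epsilon=0+}=\frac{1}{|D|}\int_D\left(-\kappa\, c_\rho\cdot x\right)\dx-\int_{\supp}\lambda\,\rho\,\dx .
\]
Using $-\kappa\, c_\rho\cdot x\le -\lambda-\delta$ on $D$ and $\int\rho=1$, this is at most $-\lambda-\delta+\lambda=-\delta<0$. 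Therefore the energy strictly decreases along this direction, and $\rho$ is unstable.

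The main obstacle is the justification of the one-sided derivative. At $\epsilon=0$ the entropy density $\rho^m$ is not differentiable in the direction of $\chi_D$ in the naive sense; this is handled by the explicit computation on $D$ and the fact that $m>1$. On $\supp$, differentiability is routine by dominated convergence, since $\rho$ is bounded by \eqref{eqn:equil}. A secondary point is the degenerate case $c_\rho=0$, which is handled as described above.
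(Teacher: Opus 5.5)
Your proof is correct and is essentially the paper's argument. The paper also uses $\rho^\epsilon=(1-\epsilon)\rho+\epsilon\psi$ with $\psi$ a probability density supported in $\{\kappa\, c_\rho\cdot x+\lambda\ge\delta\}\setminus\mathrm{supp}(\rho)$, obtains $\frac{\mathrm{d}}{\mathrm{d}\epsilon}E[\rho^\epsilon]\big|_{\epsilon=0+}\le-\delta$, and your normalized indicator $\chi_D/|D|$ is one concrete choice of such a $\psi$. The extra points you justify are asserted or used only implicitly in the paper, so your version is slightly more careful: the existence of a positive-measure set with a uniform $\delta>0$, the case $c_\rho=0$ (where $\lambda>0$), and the vanishing one-sided derivative of the entropy term $\epsilon^m|D|^{1-m}$ on $D$ for $m>1$.
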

\begin{proof}
Let $\rho$ be an equilibrium as in the assumption of the lemma, and define
\[
\mathcal{S} := \left\{x \in \bbs^\dm:\kappa\, c_\rho\cdot x+\lambda\ge0\right\}.
\]
Since the inclusion in \eqref{eqn:suppcondi} is strict, $\mathcal{S}\backslash \supp$ is a non-zero measure set. 

Consider a probability density function $ \displaystyle \psi\in\mathcal{P}_{ac}(\mathcal{S}\backslash \supp)$ such that
\begin{align}\label{eqn:supppsi}
\mathrm{supp}(\psi)\subset \left\{x\in\mathbb \bbs^\dm:\kappa \, c_\rho\cdot x+\lambda\ge\delta\right\},
\end{align}
for some $\delta>0$. Then define the family of probability densities:
\[
\rho^\epsilon=(1-\epsilon)\rho+\epsilon \psi, \qquad\forall \epsilon\in[0, 1).
\]
The effect of the perturbation above is to transfer mass from the support of $\rho$ into its complement. We will show that by making this mass transfer, the energy decreases.

As
\[
\partial_\epsilon \rho^\epsilon=-\rho+\psi,
\]
we compute
\begin{equation}
\label{eqn:dEdeps-connected}
\begin{aligned}
\frac{\d}{\d\epsilon}E[\rho^\epsilon]\bigg|_{\epsilon=0+}&=\int_{\bbs^\dm}\frac{\delta E}{\delta\rho}(x) \left(-\rho(x)+\psi(x)\right)\dx\\[2pt]
&=-\int_{\supp}\frac{\delta E}{\delta\rho}(x)\rho(x)\dx+\int_{\bbs^\dm \setminus \supp }\frac{\delta E}{\delta\rho}(x)\psi(x)\dx\\[2pt]
&=-\lambda -\int_{\bbs^\dm}\kappa \, c_\rho\cdot x \, \psi(x)\dx,
\end{aligned}
\end{equation}
where for the last equal sign we used $\frac{\delta E}{\delta\rho}(x) = \lambda$ on $\supp$, and  $\frac{\delta E}{\delta\rho}(x) = - \kappa \, c_\rho\cdot x$ on its complement $\bbs^\dm \setminus \supp$ -- see \eqref{eqn:fvE} and \eqref{eqn:EL}, and also note that $\psi$ is supported on $\bbs^\dm \setminus \supp$. 

Furthermore, by \eqref{eqn:supppsi} we have
\begin{equation*}
\kappa \, c_\rho\cdot x\geq -\lambda+\delta, \qquad\forall x\in\mathrm{supp}(\psi).
\end{equation*}
Now multiply the above by $\psi$ and integrate on $\bbs^\dm$ (note that $\psi$ is a probability density function, hence it is non-negative), to get
\begin{equation}
\label{eqn:glpd}
\begin{aligned}
\int_{\bbs^\dm}\kappa \, c_\rho\cdot x \, \psi(x)\dx &\geq (-\lambda+\delta)\int_{\bbs^\dm}\psi(x)\dx\\
&=-\lambda+\delta.
\end{aligned}
\end{equation}

Finally, combine \eqref{eqn:dEdeps-connected} and \eqref{eqn:glpd} to find
\[
\frac{\d}{\d\epsilon}E[\rho^\epsilon]\bigg|_{\epsilon=0+}\leq -\lambda-(-\lambda+\delta)=-\delta<0.
\]
Since the energy decreases, we infer that $\rho$ is unstable.
\end{proof}

Due to Lemmas \ref{lem:constant-lambda} and \ref{lem:supportcondi}, we only need to investigate the stability of equilibria $\rho$ that satisfy \eqref{eqn:EL} and have connected support given by \eqref{eqn:support}. These are the equilibria identified in \cite{FePaVa2025}, as given by \eqref{eqn:equil-fs} or \eqref{eqn:equil-cs}, depending on whether they are supported on the full sphere or on a strict subset of it. When convenient, we will refer to these equilibria in the common notation \eqref{eqn:equil}.

Consider now an equilibrium $\rho$ in the form \eqref{eqn:equil}. For a fixed positive number $\epsilon_0>0$, take a family of perturbed states $\rho^\epsilon \in \calP_{ac} (\bbs^\dm)$ in the form
\begin{equation}
\label{eqn:rhoeps}
\rho^\epsilon=\rho+\epsilon\psi,\qquad  0\leq \epsilon\leq\epsilon_0,
\end{equation}
where $\psi \in L^1(\bbs^\dm)$ satisfies:
\begin{equation}
\label{eqn:psi-L1}
\int_{\bbs^\dm}\psi(x)\dx=0,
\end{equation}
together with
\begin{equation}
\label{eqn:rhoeps-pos}
\psi(y)\geq-\frac{1}{\epsilon_0}\rho(y),\quad \forall 
 y\in\supp, \qquad \text{ and } \qquad \psi(y)\geq0, \quad\forall  y\in\supp^c.
\end{equation}
The first condition guarantees that $\rho^\epsilon$ has unit mass, while the other two conditions are needed for $\rho^\epsilon$ to be non-negative both in the support and outside the support of $\rho$.

\begin{proposition}[First-order stability]
\label{prop:fo-stab}
Let $\rho$ be an equilibrium in the form \eqref{eqn:equil} and consider general linear perturbations in the form \eqref{eqn:rhoeps}, where $\psi \in L^1(\bbs^\dm)$ satisfies \eqref{eqn:psi-L1} and \eqref{eqn:rhoeps-pos}. Then, $\rho$ is stable with respect to perturbations $\psi$ for which $\mathrm{supp}(\psi)$ is not a subset of $\supp$.
\end{proposition}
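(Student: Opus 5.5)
The plan is to show that the one-sided first variation $\frac{\d}{\d\epsilon}E[\rho^\epsilon]\big|_{\epsilon=0+}$ is strictly positive whenever $\psi$ puts positive mass outside $\supp$; the energy then increases to first order along such directions, which is what stability means here. Since $\rho^\epsilon=\rho+\epsilon\psi$, the computation at the start of Section \ref{subsect:cp} gives
\begin{equation*}
\frac{\d}{\d\epsilon}E[\rho^\epsilon]\Big|_{\epsilon=0+}=\int_{\bbs^\dm}\frac{\delta E}{\delta\rho}(x)\,\psi(x)\dx ,
\end{equation*}
with $\frac{\delta E}{\delta \rho}$ given by \eqref{eqn:fvE}. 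Differentiating $\rho^{m}$ at points where $\rho=0$ is justified because $m>1$: the entropy term contributes $\frac{1}{m-1}\epsilon^{m-1}\int_{\supp^c}\psi^m\dx$ there, and this term has zero derivative at $\epsilon=0+$.

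Next I split the integral over $\supp$ and its complement. On $\supp$, \eqref{eqn:EL} gives $\frac{\delta E}{\delta\rho}=\lambda$. Off the support, $\rho=0$ and so $\frac{\delta E}{\delta\rho}(x)=-\kappa\,c_\rho\cdot x$. Using \eqref{eqn:psi-L1}, $\int_{\supp}\psi\dx=-\int_{\supp^c}\psi\dx$, and therefore
\begin{equation*}
\frac{\d}{\d\epsilon}E[\rho^\epsilon]\Big|_{\epsilon=0+}=-\int_{\supp^c}\bigl(\lambda+\kappa\,c_\rho\cdot x\bigr)\psi(x)\dx .
\end{equation*}
By \eqref{eqn:support}, the complement of the support is exactly the set where $\lambda+\kappa\,c_\rho\cdot x<0$. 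Also, by \eqref{eqn:rhoeps-pos}, $\psi\ge0$ there.

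The step that needs care is strict positivity. The hypothesis that $\mathrm{supp}(\psi)\not\subset\supp$ means $\psi>0$ on a subset of $\supp^c$ of positive measure. On that subset the integrand $-(\lambda+\kappa c_\rho\cdot x)\psi$ is strictly positive, so the derivative is $>0$. This covers the strictly supported equilibria in \eqref{eqn:equil-cs}. For the uniform and fully supported equilibria \eqref{eqn:equil-fs}, $\supp^c=\emptyset$, so the statement holds vacuously. I will also note that the boundary sphere $\{\lambda+\kappa c_\rho\cdot x=0\}$ has measure zero, so it does not affect the argument.

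Finally, I will conclude that $E[\rho^\epsilon]>E[\rho]$ for all small $\epsilon>0$. Perturbations supported inside $\supp$ have zero first variation, since there $\frac{\delta E}{\delta\rho}$ is the constant $\lambda$ and $\psi$ has zero mass. Those directions are left to the second-order analysis of Section \ref{sect:2nd-order}.
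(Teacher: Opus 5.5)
Your proposal is correct and takes essentially the same route as the paper. Both evaluate the one-sided first variation, use $\frac{\delta E}{\delta\rho}=\lambda$ on $\supp$ together with $\int\psi=0$ to reduce it to $-\int_{\supp^c}(\lambda+\kappa\, c_\rho\cdot x)\psi\,\dx$, and obtain strict positivity from $\lambda+\kappa\, c_\rho\cdot x<0$ and $\psi\ge 0$ off the support. One small slip in your differentiability remark: the entropy contribution off the support is $\frac{\epsilon^{m}}{m-1}\int_{\supp^c}\psi^m\,\dx$, not $\epsilon^{m-1}$, and its derivative at $\epsilon=0+$ vanishes because $m>1$.
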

\begin{proof}
Using \eqref{eqn:energy-s}, we compute
\begin{align}\label{First-var}
\frac{\d}{\d\epsilon}E[\rho^\epsilon]=\frac{m}{m-1}\int_{\bbs^\dm}\rho^\epsilon(x)^{m-1}\psi(x)\dx-\kappa \, c_{\rho^\epsilon} \cdot \int_{\bbs^\dm} x \psi(x) \dx,
\end{align}
and then evaluate \eqref{First-var} at $\epsilon = 0+$, to find
\begin{equation}
\label{eqn:dEdeps-0}
\frac{\d}{\d\epsilon}E[\rho^\epsilon]\bigg|_{\epsilon=0+}=\frac{m}{m-1}\int_{\bbs^\dm}\rho(x)^{m-1}\psi(x)\dx-\kappa \, c_{\rho} \cdot \int_{\bbs^\dm} x \psi(x) \dx.
\end{equation}
We will investigate the sign of the r.h.s. of \eqref{eqn:dEdeps-0}. 

Using \eqref{eqn:EL}, we compute the r.h.s. of \eqref{eqn:dEdeps-0} as
\begin{align*}
&\frac{m}{m-1}\int_{\bbs^\dm}\rho(x)^{m-1}\psi(x)\dx-\kappa \, c_{\rho} \cdot \int_{\bbs^\dm} x \psi(x) \dx \\[2pt]
& \hspace{2cm} =\int_{\bbs^\dm}\left(\frac{m}{m-1}\rho(x)^{m-1}-\kappa \, c_\rho\cdot x\right)\psi(x)\dx\\[2pt]
&\hspace{2cm} =\int_{\supp}\left(\frac{m}{m-1}\rho(x)^{m-1}-\kappa \, c_\rho\cdot x\right)\psi(x)\dx \\
&\hspace{2cm} \quad +\int_{\supp^c}\left(\frac{m}{m-1}\rho(x)^{m-1}-\kappa \, c_\rho\cdot x\right)\psi(x)\dx\\
&\hspace{2cm} =\int_{\supp}\lambda \psi(x)\dx+\int_{\supp^c}\left(\frac{m}{m-1}\rho(x)^{m-1}-\kappa \, c_\rho\cdot x\right)\psi(x)\dx\\
&\hspace{2cm} =\int_{\supp^c}\left(\frac{m}{m-1}\rho(x)^{m-1}-\kappa \, c_\rho\cdot x-\lambda\right)\psi(x)\dx,
\end{align*}
where in the last equality we used
\[
0=\int_{\bbs^\dm}\psi(x)\dx=\int_{\supp}\psi(x)\dx+\int_{\supp^c}\psi(x)\dx.
\]

For any $ x \in \supp^c$ we have $\rho(x)=0$, $-\kappa \, c_\rho\cdot x -\lambda>0$ (see \eqref{eqn:support}), and $\psi(x) \geq 0$. Hence, provided $\mathrm{supp}(\psi)$ is not a subset of $\supp$, then $\frac{\d}{\d\epsilon}E[\rho^\epsilon]\big|_{\epsilon=0+}>0$ and $\rho$ is stable with respect to the perturbation $\psi$. 
\end{proof}

By the calculation above, if $\mathrm{supp}(\psi)\subset\supp$ then the first variation \eqref{eqn:dEdeps-0} vanishes, and the stability of $\rho$ needs to be determined by higher-order variations. We will investigate this in the next section.

\section{Second order stability}
\label{sect:2nd-order}
For an equilibrium $\rho$ in the form \eqref{eqn:equil}, we consider here only perturbations in the form \eqref{eqn:rhoeps} with  
$\mathrm{supp}(\psi)\subset \mathrm{supp}(\rho)$.
The set of admissible perturbations $\psi$ is denoted by
\begin{equation}
\label{eqn:calA}
\mathcal{A}:=\left\{\psi \in L^1:\mathrm{supp}(\psi)\subset \supp,\quad\int_{\supp}\psi(x)\dx=0,\quad \psi(x)\geq-\frac{1}{\epsilon_0}\rho(x)\right\}.
\end{equation}

From \eqref{First-var}, we have
\begin{equation*}
\frac{\d^2}{\d\epsilon^2}E[\rho^\epsilon]=m\int_{\bbs^\dm}\rho^\epsilon(x)^{m-2}\psi(x)^2\dx-\kappa \left \| \int_{\bbs^\dm} x \psi(x) \dx \right \|^2,
\end{equation*}
and hence,
\begin{equation}
\label{eqn:d2Edeps-0}
\left.\frac{\d^2}{\d\epsilon^2}E[\rho^\epsilon]\right|_{\epsilon=0+}=m\int_{\bbs^\dm}\rho(x)^{m-2}\psi(x)^2\dx-\kappa \left \| \int_{\bbs^\dm} x \psi(x) \dx \right \|^2.
\end{equation}

To establish the stability of the equilibrium $\rho$ we need to assess the sign of \eqref{eqn:d2Edeps-0}. For this purpose, we define the following functional:
\begin{equation}
\label{eqn:calF}
\mathcal{F}[\psi]=\frac{\int_{\supp}\rho(x)^{m-2}\psi(x)^2\dx}{\left \| \int_{\bbs^\dm} x \psi(x) \dx \right \|^2},
\end{equation}
and look into minimizing $\mathcal{F}[\psi]$ over $\psi \in \mathcal{A}$. However, for reasons which we will explain later (see Remark \ref{rmk:minA}), instead of finding directly the minimum of $\mathcal{F}[\psi]$ on $\mathcal{A}$, we investigate the minimum of the functional in the larger set $\tilde{\mathcal{A}}$ defined by
\begin{equation}
\label{eqn:calAtilde}
\tilde{\mathcal{A}}:=\left\{\psi \in L^1:\mathrm{supp}(\psi)\subset \supp,\quad\int_{\supp}\psi(x)\dx=0\right\}.
\end{equation}
Note that functions in the larger class $\tilde{\mathcal{A}}$ are no longer required to be bounded below by $-\frac{1}{\epsilon_0}\rho(x)$.

\subsection{Critical points of the functional $\calF[\psi]$} We look first into the critical points of the functional $\mathcal{F}$. Consider perturbations $\psi^\epsilon = \psi + \epsilon \delta \psi$, and set
\begin{equation}
\label{eqn:fvar-calF}
0= \left. \frac{\d}{\d\epsilon}\mathcal{F}[\psi+\epsilon\delta\psi] \right |_{\epsilon =0}=\int_{\supp}\frac{\delta\mathcal{F}[\psi]}{\delta\psi(x)}\delta\psi(x)\dx,
\end{equation}
where the functional derivative of $\mathcal{F}[\psi]$ is computed as
\[
\frac{\delta\mathcal{F}}{\delta\psi}(x)=\frac{2\rho(x)^{m-2}\psi(x)}{\left \| \int_{\bbs^\dm} y \psi(y) \dy \right \|^2}- \left( \int_{\supp}\rho(y)^{m-2}\psi(y)^2\dy \right) \frac{2 \int_{\bbs^\dm} y \psi(y) \dy \cdot x}{\left \| \int_{\bbs^\dm} y \psi(y) \dy \right \|^4}.
\]
Since admissible perturbations have zero mass, then necessarily $\int_{\supp} \delta \psi(x) \dx =0$.

Note that for a critical point $\psi$ of $\mathcal{F}$, it holds that $\frac{\delta\mathcal{F}[\psi]}{\delta\psi}(x)$ is constant on $\supp$. Indeed, assuming the contrary, one can choose two subsets $A_1, A_2\subset \supp$ satisfying $\frac{\delta\mathcal{F}[\psi]}{\delta\psi(x_1)}<\frac{\delta\mathcal{F}[\psi]}{\delta\psi(x_2)}$ for any $x_1\in A_1$ and $x_2\in A_2$. Then, take
\[
\delta\psi(x)=\frac{1}{|A_1|}\chi_{A_1}(x)-\frac{1}{|A_2|}\chi_{A_2}(x), \qquad \forall~x \in \supp,
\]
and find
\[
\int_{\supp}\frac{\delta\mathcal{F}[\psi]}{\delta\psi(x)}\delta\psi(x)\dx<0,
\]
which contradicts \eqref{eqn:fvar-calF}.

For convenience of notations, denote by $\mu_{\psi}$ the non-normalized first moment of $\psi$:
\[
\mu_{\psi} = \int_{\bbs^\dm} x \psi(x) \dx.
\]
By the argument above, for a critical point $\psi$ of $\mathcal{F}$, we have 
\begin{equation}
\label{eqn:EL-calF}
\frac{2\rho(x)^{m-2}\psi(x)}{\|\mu_{\psi}\|^2}-\int_{\supp}\rho(y)^{m-2}\psi(y)^2\dy \, \frac{2\mu_{\psi}\cdot x}{\|\mu_{\psi}\|^4}=2\Lambda,\qquad x \in \supp, 
\end{equation}
for some constant $\Lambda$. This yields
\begin{equation}
\label{eqn:phi-1}
\psi(x)=\left(\Lambda+\frac{\mu_{\psi}\cdot x}{\|\mu_{\psi}\|^4}\int_{\supp}\rho(y)^{m-2}\psi(y)^2\dy \right)\|\mu_{\psi}\|^2\rho(x)^{2-m}, \qquad x \in \supp.
\end{equation}

As $\psi$ has zero mass, we get
\begin{align*}
0&=\int_{\supp}\psi(x)\dx\\
&=\Lambda \|\mu_{\psi}\|^2\int_{\supp}\rho(x)^{2-m}\dx+\frac{1}{\|\mu_{\psi}\|^2}\int_{\supp}\rho(x)^{m-2}\psi(x)^2\dx\int_{\supp}\mu_{\psi}\cdot x \, \rho(x)^{2-m}\dx.
\end{align*}

Then, we can express $\Lambda$ as 
\begin{align*}
\Lambda&=-\frac{1}{\|\mu_{\psi}\|^4}\frac{\int_{\supp}\rho(x)^{m-2}\psi(x)^2\dx\int_{\supp}\mu_{\psi}\cdot x \, \rho(x)^{2-m}\dx}{\int_{\supp}\rho(x)^{2-m}\dx}\\[2pt]
&=-\frac{\mu_{\psi}\cdot c_{\rho^{2-m}}}{\|\mu_{\psi}\|^4}\int_{\supp}\rho(y)^{m-2}\psi(y)^2\dy,
\end{align*}
which used in \eqref{eqn:phi-1} gives
\begin{equation}
\label{eqn:phi-2}
\psi(x)=\frac{\rho(x)^{2-m}\mu_{\psi}\cdot(x-c_{\rho^{2-m}})}{\|\mu_{\psi}\|^2}\int_{\supp}\rho(y)^{m-2}\psi(y)^2\dy.
\end{equation}
Note that here, 
\begin{equation}
\label{eqn:crhomm2}
c_{\rho^{2-m}}=\frac{\int_{\supp} x \rho^{2-m}(x) \dx}{\int_{\supp}\rho^{2-m}(x)\dx}
\end{equation}
is the centre of mass of the density $\rho^{2-m}$ -- see \eqref{eqn:CM-general}. 

\begin{remark} The integrability of $\rho^{2-m}$ follows from the specific expression of $\rho$. We distinguish two cases:

a) $\rho$ is fully supported, as given by \eqref{eqn:equil-fs}. Unless $\rho$ vanishes at a single point (i.e., the case $\lambda=\kappa\|c_\rho\|$), $\rho^{2-m}$ is well defined and integrable on the whole domain. If $\rho$ vanishes at a single point, then substituting $\lambda=\kappa\|c_\rho\|$ into \eqref{eqn:equil-fs} yields
\[
\rho(x)=\left(\frac{(m-1)\kappa\|c_\rho\|}{m}\right)^{\frac{1}{m-1}}(1+\cos\theta_x)^{\frac{1}{m-1}},\quad x\in\bbs^\dm.
\]
To verify the integrability of $\rho^{2-m}$, we compute
\[
\int_{\bbs^\dm}(1+\cos\theta_x)^{\frac{2-m}{m-1}}\dx
=
\dm w_\dm \int_0^\pi (1+\cos\theta)^{\frac{2-m}{m-1}}\sin^{\dm-1}\theta\d\theta.
\]
Using $1+\cos\theta=2\cos^2\left(\frac{\theta}{2}\right)$ and $\sin\theta=2\sin\left(\frac{\theta}{2}\right)\cos\left(\frac{\theta}{2}\right)$, the above integral can be simplified, up to a multiplicative constant, as
\begin{align*}\int_0^\pi
\cos^{2\left(\frac{2-m}{m-1}\right)+\dm-1}
\left(\frac{\theta}{2}\right)
\sin^{\dm-1}\left(\frac{\theta}{2}\right)\d\theta
=\int_0^\pi
\sin^{2\left(\frac{2-m}{m-1}\right)+\dm-1}
\left(\frac{\theta}{2}\right)
\cos^{\dm-1}\left(\frac{\theta}{2}\right)\d\theta,
\end{align*}
where we also used the change of variable $ \theta\to\pi-\theta$.

The above integral is finite if and only if
\[
2\left(\frac{2-m}{m-1}\right)+\dm-1>-1,
\]
which is equivalent to
\[
\frac{2}{m-1}>2-\dm.
\]
Therefore, if $\dm\geq2$ and $m>1$, then $\rho^{2-m}$ is integrable in this case.
\smallskip

b) $\rho$ is partially supported, as given by \eqref{eqn:equil-cs}. Using \eqref{eqn:phi-notation}, we can write $\rho$ as 
\[
\rho(x)=\begin{cases}
\left(\frac{(m-1)\kappa\|c_\rho\|}{m}\right)^{\frac{1}{m-1}}\left(\cos\theta_x-\cos\phi\right)^{\frac{1}{m-1}},\qquad&\text{ if }0\leq\theta_x\leq \phi,\\[5pt]
0,\qquad&\text{ otherwise}.
\end{cases}
\]
To verify the integrability of $\rho^{2-m}$, we compute
\[
\int_{\supp}(\cos\theta_x-\cos\phi)^{\frac{2-m}{m-1}}\dx=\dm w_\dm
\int_0^\phi
(\cos\theta-\cos\phi)^{\frac{2-m}{m-1}}
\sin^{\dm-1}\theta\d\theta.
\]
Recall that $0<\phi<\pi$ in this case. Since the only possible singularity of the integrand occurs at $\theta=\phi$, we introduce the change of variable
\[
\tilde{\theta}=\phi-\theta,\qquad
0\le\theta\le\phi.
\]
Then the above integral becomes
\[
\int_0^\phi
(\cos\theta-\cos\phi)^{\frac{2-m}{m-1}}
\sin^{\dm-1}\theta\d\theta
=
\int_0^\phi
\left(\cos(\phi-\tilde{\theta})-\cos\phi\right)^{\frac{2-m}{m-1}}
\sin^{\dm-1}(\phi-\tilde{\theta})\d\tilde{\theta}.
\]
Since
\[
\cos(\phi-\tilde{\theta})-\cos\phi
=
\cos\phi(\cos\tilde{\theta}-1)+\sin\phi\sin\tilde{\theta},
\]
we have
\[
\cos(\phi-\tilde{\theta})-\cos\phi=O(\tilde{\theta}),
\qquad\text{as }\tilde{\theta}\searrow 0.
\]
Therefore, the above integral is finite if and only if
\[
\frac{2-m}{m-1}>-1,
\]
which is equivalent to $m>1$. Therefore, $\rho^{2-m}$ is also integrable in this case.
\end{remark}

Now, square \eqref{eqn:phi-2}, multiply the result by $\rho(x)^{m-2}$ and integrate, to get
\begin{multline}
\int_{\supp}\rho(x)^{m-2}\psi(x)^2\dx= \\ 
\frac{1}{\|\mu_{\psi}\|^4}\left(\int_{\supp}\rho(x)^{m-2}\psi(x)^2\dx\right)^2 \int_{\supp}\rho(x)^{2-m}\left(\mu_{\psi}\cdot (x-c_{\rho^{2-m}})\right)^2\dx,
\end{multline}
which then yields
\begin{equation}
\label{eqn:int-rhomm2phis}
\int_{\supp}\rho(x)^{m-2}\psi(x)^2\dx=\frac{\|\mu_{\psi}\|^4}{\int_{\supp}\rho(x)^{2-m}\left(\mu_{\psi}\cdot (x-c_{\rho^{2-m}})\right)^2\dx}.
\end{equation}
Finally, using \eqref{eqn:int-rhomm2phis} in \eqref{eqn:phi-2} we write $\psi$ as 
\begin{equation}
\label{eqn:phi-final}
\psi(x)=\frac{\|\mu_{\psi}\|^2\rho(x)^{2-m}\mu_{\psi}\cdot(x-c_{\rho^{2-m}})}{\int_{\supp}\rho(z)^{2-m}\left(\mu_{\psi}\cdot (z-c_{\rho^{2-m}})\right)^2\dz}.
\end{equation}

\begin{remark}[Minimizing in $\calA$ versus $\tilde{\calA}$.] 
\label{rmk:minA}
When $\rho$ is strictly supported on the sphere and $m>2$, $\psi(x)$ becomes infinite on the boundary of $\supp$ (as $\rho(x)$ vanishes there and $2-m<0$). In this case, the critical points in the form \eqref{eqn:phi-final} do not lie in $\calA$, but in the larger class $\tilde{\calA}$, which is the reason why we introduced this larger set. Next, we will first investigate how to minimize $\calF$ over $\tilde{\calA}$, and then return to minimizing $\calF$ over $\calA$.
\end{remark}


\subsection{Minimizing $\calF[\psi]$ over $\psi \in \tilde{\calA}$}
All critical points (and hence, the candidates for the minimizers of the functional $\calF$) are expressed in the form \eqref{eqn:phi-final}. However, \eqref{eqn:phi-final} defines $\psi$ implicitly, via its first moment $\mu_\psi$, so its consistency needs to be checked.

Let $\mathcal{C} \subset \tilde{\mathcal{A}}$ be the set of all nonzero critical points $\psi$ satisfying \eqref{eqn:phi-final}. For any $y\in\bbs^\dm$, we denote 
\begin{align}\label{eqn:phicy}
\psi_y(x)=\frac{\rho(x)^{2-m}y\cdot(x-c_{\rho^{2-m}})}{\int_{\supp}\rho(z)^{2-m}\left(y\cdot (z-c_{\rho^{2-m}})\right)^2\dz}.
\end{align}
We first note that for any $\psi \in\mathcal{C}$, there exist a constant $\alpha\in\bbr\backslash\{0\}$ and $y\in\bbs^\dm$ such that
\[
\psi=\alpha\psi_{y},
\]
since by \eqref{eqn:phi-final}, we can choose $y=\frac{\mu_\psi}{\|\mu_\psi\|}$ and $\alpha=\|\mu_\psi\|$.

Define
\begin{equation}
\label{eqn:Cprime}
\mathcal{C}':=\{\psi \in\mathcal{C}:\|\mu_\psi\|=1\}.
\end{equation}
We can easily find that if $\psi$ solves \eqref{eqn:phi-final} then so does $\alpha\psi$ for all $\alpha\neq0$. Furthermore, $\mathcal{F}$ is homogeneous, i.e. for any nonzero $\psi \in\tilde{\mathcal{A}}$ and $\alpha\neq0$, we have $\mathcal{F}[\alpha\psi]=\mathcal{F}[\psi]$. This implies that to minimize $\calF$ over $\calC$, it is enough to minimize over $\calC' \subset \calC$. Therefore, we have
\begin{align}
\label{infequiv}
\inf_{\psi \in\mathcal{\tilde{A}}}\mathcal{F}[\psi]=\inf_{\psi\in\mathcal{C}}\mathcal{F}[\psi]=\inf_{ \psi \in\mathcal{C}'}\mathcal{F}[\psi].
\end{align}

\begin{lemma}\label{ppcp}
Let $y \in \bbs^\dm$ be a vector parallel or perpendicular to $x_0$. Then $\psi_y\in\mathcal{C}'$.
\end{lemma}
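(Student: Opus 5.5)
To show $\psi_y\in\mathcal{C}'$, we must check two things. First, $\psi_y$ lies in $\tilde{\mathcal{A}}$. Second, $\psi_y$ satisfies the fixed-point relation \eqref{eqn:phi-final} with $\|\mu_{\psi_y}\|=1$. Since \eqref{eqn:phi-final} is homogeneous of degree one in $\mu_\psi$, it suffices to show that $\mu_{\psi_y}=y$. Then the right-hand side of \eqref{eqn:phi-final} with $\mu_\psi=y$ is exactly $\psi_y$ as defined in \eqref{eqn:phicy}, and $\|\mu_{\psi_y}\|=\|y\|=1$.

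Membership in $\tilde{\mathcal{A}}$ is immediate. The support of $\psi_y$ lies in $\supp$, and integrability follows from the remark on integrability of $\rho^{2-m}$ together with boundedness of $y\cdot(x-c_{\rho^{2-m}})$. Zero mass holds by the definition \eqref{eqn:crhomm2} of $c_{\rho^{2-m}}$:
\[
\int_{\supp}\rho^{2-m}(x)\,(x-c_{\rho^{2-m}})\dx=0.
\]
We also need the denominator in \eqref{eqn:phicy} to be nonzero. This holds because $y\cdot(z-c_{\rho^{2-m}})$ is a nonconstant affine function of $z$ restricted to a set of positive measure.

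The main step is computing $\mu_{\psi_y}$. Let $D_y$ denote the denominator in \eqref{eqn:phicy}, and let $M$ be the weighted covariance matrix
\[
M=\int_{\supp}\rho(x)^{2-m}\,(x-c_{\rho^{2-m}})\otimes(x-c_{\rho^{2-m}})\dx .
\]
Using zero mass, we can replace $x$ by $x-c_{\rho^{2-m}}$ in the first moment, which gives
\[
\mu_{\psi_y}=\frac{1}{D_y}\int_{\supp}\rho(x)^{2-m}\,(x-c_{\rho^{2-m}})\,\bigl(y\cdot(x-c_{\rho^{2-m}})\bigr)\dx=\frac{My}{D_y},
\]
and also $D_y=y\cdot My$. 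Therefore $\mu_{\psi_y}=y$ if and only if $y$ is an eigenvector of $M$.

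Axial symmetry gives the eigenvectors. By \eqref{eqn:equil}, $\rho$ depends only on $x_0\cdot x$, so $\rho^{2-m}$ is invariant under rotations fixing $x_0$, and hence so is the weighted measure. Consequently $c_{\rho^{2-m}}$ is parallel to $x_0$, and $M$ commutes with every rotation fixing $x_0$. This forces $M$ to take the form $a\,x_0\otimes x_0+b\,(I-x_0\otimes x_0)$. For an explicit check, note that for $y\perp x_0$ and any $x_0'\perp$ both $x_0$ and $y$, the mixed terms vanish because the weight is invariant under the reflection $x\mapsto x-2(y\cdot x)y$. Thus $x_0$ and every vector perpendicular to $x_0$ are eigenvectors, which proves the claim.

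The only potential obstacle is justifying the symmetry-and-integrability details. This includes the degenerate fully supported case where $\rho$ vanishes at one point, and that case is covered by the earlier integrability remark.
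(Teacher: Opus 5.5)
Your argument is correct and is essentially the paper's proof. The paper also uses the centring identity $\int_{\supp}\rho^{2-m}(x-c_{\rho^{2-m}})\dx=0$ to get $\mu_{\psi_y}\cdot y=1$, and reflections fixing $x_0$ to remove the components of $\mu_{\psi_y}$ orthogonal to $y$; this is exactly the componentwise content of your statement that $y$ is an eigenvector of $M$, with $\mu_{\psi_y}=My/(y\cdot My)$. The matrix packaging is a tidy improvement (it also makes Lemma~\ref{lem:F=F} and Proposition~\ref{prop:minphiy} nearly immediate, since $\mathcal{F}[\psi_y]=(y\cdot My)/\|My\|^2$), and your explicit checks that $\psi_y\in\tilde{\mathcal{A}}$ and $D_y>0$ fill in details the paper leaves implicit.
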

\begin{proof}
For both cases, we will show that $\psi_y$ satisfies \eqref{eqn:phi-final} and that $\|\mu_{\psi_y}\| =1$.\\

\noindent$\bullet$ (Case 1: $y$ parallel to $x_0$). For this case, we can assume $y=x_0$ since for $y=-x_0$, $\psi_{-x_0}=-\psi_{x_0}$, and the argument is the same. By \eqref{eqn:phicy}, we have
\begin{align}\label{eq:phix0}
\psi_{x_0}(x)=\frac{\rho(x)^{2-m}x_0\cdot(x-c_{\rho^{2-m}})}{\int_{\supp}\rho(z)^{2-m}\left(x_0\cdot (z-c_{\rho^{2-m}})\right)^2\dz},
\end{align}
and then we can calculate $\mu_{\psi_{x_0}}$ as follows:
\begin{equation}
\label{eqn:mu-phic}
\mu_{\psi_{x_0}}=\frac{\int_{\supp}\rho(x)^{2-m}\left(x_0\cdot(x-c_{\rho^{2-m}})\right) x\, \dx}{\int_{\supp}\rho(x)^{2-m}\left(x_0\cdot (x-c_{\rho^{2-m}})\right)^2\dx}.
\end{equation}

Let $x_1 \in \bbs^\dm$ be an arbitrary vector perpendicular to $x_0$. We will show that $\mu_{\psi_{x_0}}$ is orthogonal to $x_1$, i.e., $\mu_{\psi_{x_0}}$ is parallel to $x_0$.  Write
\[
x_1 \cdot \int_{\supp}\rho(x)^{2-m}\left(x_0\cdot(x-c_{\rho^{2-m}})\right)x\dx =\int_{\supp}\rho(x)^{2-m}\left(
x_0\cdot(x-c_{\rho^{2-m}})\right)(x_1\cdot x)\dx.
\]
Denote by $r(x)$ the reflection across the hyperplane orthogonal to $x_1$. Then, by the axial symmetry of $\rho$ around $x_0$, we get
\begin{align*}
&\int_{\supp}\rho(x)^{2-m}\left(
x_0\cdot(x-c_{\rho^{2-m}})\right)(x_1\cdot x)\dx\\
&\qquad = \frac{1}{2} \int_{\supp}\rho(x)^{2-m}\left(
x_0\cdot(x-c_{\rho^{2-m}})\right)(x_1\cdot x)\dx \\
& \qquad \quad + \frac{1}{2} \int_{\supp}\rho(r(x))^{2-m}\left(
x_0\cdot(r(x)-c_{\rho^{2-m}})\right)(x_1\cdot r(x))\dx\\
& \qquad = \frac{1}{2} \int_{\supp}\rho(x)^{2-m}\left(
x_0\cdot(x-c_{\rho^{2-m}})\right)(x_1\cdot x)\dx \\
&\qquad \quad + \frac{1}{2} \int_{\supp}\rho(x)^{2-m}\left(
x_0\cdot(x-c_{\rho^{2-m}})\right)(-x_1\cdot x)\dx\\
&\qquad =0.
\end{align*}
Therefore, by \eqref{eqn:mu-phic} we have $\mu_{\psi_{x_0}}\cdot x_1=0$, for any $x_1$ perpendicular to $x_0$. It implies that
\[
\mu_{\psi_{x_0}}=(\mu_{\psi_{x_0}}\cdot x_0)x_0.
\]
We can further calculate $\mu_{\psi_{x_0}}\cdot x_0$ as follows:
\begin{align*}
\mu_{\psi_{x_0}}\cdot x_0&=\frac{\int_{\supp}\rho(x)^{2-m}\left(x_0\cdot(x-c_{\rho^{2-m}})\right)(x_0\cdot x)\dx}{\int_{\supp}\rho(x)^{2-m}\left(x_0\cdot (x-c_{\rho^{2-m}})\right)^2\dx}\\[2pt]
&=\frac{\int_{\supp}\rho(x)^{2-m}(x_0\cdot x)^2\dx-\int_{\supp}\rho(x)^{2-m}(x_0\cdot x)(x_0\cdot c_{\rho^{2-m}})\dx}{\int_{\supp}\rho(x)^{2-m}(x_0\cdot x)^2\dx-\int_{\supp}\rho(x)^{2-m}(x_0\cdot x)(x_0\cdot c_{\rho^{2-m}})\dx}\\[2pt]
&=1,
\end{align*}
where for the second equal sign we simplified the denominator using (see \eqref{eqn:CM-general} for the definition of $c_{\rho^{2-m}}$):
\[
\int_{\supp}\rho(x)^{2-m}(x_0\cdot x)(x_0\cdot c_{\rho^{2-m}})\dx = \int_{\supp}\rho(x)^{2-m}(x_0\cdot c_{\rho^{2-m}})(x_0\cdot c_{\rho^{2-m}})\dx.
\]

We conclude that $\mu_{\psi_{x_0}}=x_0$, and hence $\psi_{x_0}$ given by \eqref{eq:phix0} satisfies \eqref{eqn:phi-final}. Also, $\|\mu_{\psi_{x_0}}\|=\|x_0\|=1$, and therefore,  $\psi_{x_0}\in\mathcal{C}'$.\\

\noindent$\bullet$ (Case 2: $y$ perpendicular to $x_0$). Take an arbitrary $x_1 \in \bbs^\dm$ perpendicular to $x_0$ and set $y=x_1$. By \eqref{eqn:phicy} we write
\begin{align}\label{eqn:phix1}
\psi_{x_1}(x)=\frac{\rho(x)^{2-m}x_1\cdot(x-c_{\rho^{2-m}})}{\int_{\supp}\rho(z)^{2-m}\left(x_1\cdot (z-c_{\rho^{2-m}})\right)^2\dz}.
\end{align}
Since $c_{\rho^{2-m}}$ is parallel to $x_0$, we have $x_1\cdot c_{\rho^{2-m}}=0$. Therefore, one can simplify \eqref{eqn:phix1} to get:
\[
\psi_{x_1}(x)=\frac{\rho(x)^{2-m}\left(x_1\cdot x\right)}{\int_{\supp}\rho(z)^{2-m}\left(x_1\cdot z\right)^2\dz}.
\]
From this expression, we calculate $\mu_{\psi_{x_1}}$:
\[
\mu_{\psi_{x_1}}=\frac{\int_{\supp}\rho(x)^{2-m}\left(x_1\cdot x\right)x\dx}{\int_{\supp}\rho(x)^{2-m}\left(x_1\cdot x\right)^2\dx},
\]
and hence,
\begin{equation}
\label{eqn:dotp1}
\mu_{\psi_{x_1}}\cdot x_1=\frac{\int_{\supp}\rho(x)^{2-m}\left(x_1\cdot x\right)^2\dx}{\int_{\supp}\rho(x)^{2-m}\left(x_1\cdot x\right)^2\dx}=1.
\end{equation}

Now, let $x_2$ be an arbitrary vector perpendicular to $x_1$ ($x_2$ can be $x_0$), and $r(x)$ be the reflection across the hyperplane orthogonal to $x_1$, as introduced in the previous case. We then compute:
\begin{align*}
&\int_{\supp}\rho(x)^{2-m}\left(x_1\cdot x\right)(x\cdot x_2)\dx\\[2pt]
&=\frac{1}{2}\int_{\supp}\rho(x)^{2-m}\left(x_1\cdot x\right)(x\cdot x_2)\dx+\frac{1}{2}\int_{\supp}\rho(r(x))^{2-m}\left(x_1\cdot r(x)\right)(r(x)\cdot x_2)\dx\\[2pt]
&=\frac{1}{2}\int_{\supp}\rho(x)^{2-m}\left(x_1\cdot x\right)(x\cdot x_2)\dx+\frac{1}{2}\int_{\supp}\rho(x)^{2-m}\left(-x_1\cdot x\right)(x\cdot x_2)\dx \\[2pt]
&=0.
\end{align*}
Therefore, $\mu_{\psi_{x_1}} \cdot x_2 = 0$ for any $x_2 \perp x_1$, and also using \eqref{eqn:dotp1} we find $\mu_{\psi_{x_1}}=x_1$. This  implies that $\|\mu_{\psi_{x_1}}\|=\|x_1\|=1$ and that $\psi_{x_1}$ given by \eqref{eqn:phix1} satisfies \eqref{eqn:phi-final}. We conclude that $\psi_{x_1}\in\mathcal{C}'$.
\end{proof}

\begin{lemma}
\label{lem:F=F}
Let $y,\tilde{y}\in\bbs^\dm$ satisfy $y\cdot x_0=\tilde{y}\cdot x_0$. Then, $\mathcal{F}[\psi_{y}]=\mathcal{F}[\psi_{\tilde{y}}]$.
\end{lemma}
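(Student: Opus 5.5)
The plan is to use the rotational symmetry of $\rho$ about the axis $x_0$. Since $y\cdot x_0=\tilde y\cdot x_0$ and both are unit vectors, I will first construct an orthogonal transformation $R$ of $\bbr^{\dm+1}$ with $Rx_0=x_0$ and $Ry=\tilde y$. Write $y=a x_0+y^\perp$ and $\tilde y=a x_0+\tilde y^\perp$ with $a=y\cdot x_0$. Then $\|y^\perp\|=\|\tilde y^\perp\|$, because both equal $\sqrt{1-a^2}$. So on the orthogonal complement of $x_0$ there is a rotation (or reflection) taking $y^\perp$ to $\tilde y^\perp$. Extending it by the identity on $x_0$ gives $R$, which restricts to an isometry of $\bbs^\dm$ preserving $\dS$.

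Next I record the invariances. By \eqref{eqn:equil}, $\rho(x)$ depends only on $x_0\cdot x$, so $\rho(Rx)=\rho(x)$, and $\supp$ is invariant under $R$. By axial symmetry, $c_{\rho^{2-m}}$ is parallel to $x_0$, as already used in Lemma \ref{ppcp}; hence $R c_{\rho^{2-m}}=c_{\rho^{2-m}}$. I then substitute $x=R^{-1}z'$ in the formula \eqref{eqn:phicy}:
\[
\tilde y\cdot(Rx-c_{\rho^{2-m}})=Ry\cdot(Rx-Rc_{\rho^{2-m}})=y\cdot(x-c_{\rho^{2-m}}).
\]
The normalizing denominators for $y$ and $\tilde y$ are therefore equal, by a change of variables with $R$. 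This gives $\psi_{\tilde y}(Rx)=\psi_y(x)$, that is, $\psi_{\tilde y}=\psi_y\circ R^{-1}$.

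Finally I compare the two parts of $\calF$. The numerator of $\calF$ is invariant, since $\rho^{m-2}\circ R=\rho^{m-2}$ and $R$ preserves $\dS$. For the moment,
\[
\mu_{\psi_{\tilde y}}=\int x\,\psi_y(R^{-1}x)\dx=\int Rz\,\psi_y(z)\dz=R\mu_{\psi_y},
\]
so $\|\mu_{\psi_{\tilde y}}\|=\|\mu_{\psi_y}\|$ because $R$ is orthogonal. Hence $\calF[\psi_{\tilde y}]=\calF[\psi_y]$.

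There is no real obstacle here. The only points needing care are the existence of $R$ fixing $x_0$ (the case $a=\pm1$ is trivial, since then $y=\tilde y$) and the justification that $c_{\rho^{2-m}}\parallel x_0$. The latter follows from the same reflection argument as in Lemma \ref{ppcp}, or from invariance of $c_{\rho^{2-m}}$ under every rotation fixing $x_0$.
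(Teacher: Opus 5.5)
Your proposal is correct and follows essentially the same route as the paper. Both build an orthogonal map $R$ fixing $x_0$ with $Ry=\tilde y$, use $Rc_{\rho^{2-m}}=c_{\rho^{2-m}}$ to get $\psi_{\tilde y}=\psi_y\circ R^{-1}$, and then show that the numerator and $\|\mu_{\psi}\|$ are invariant under $R$. Your construction of $R$ (any orthogonal map of $x_0^\perp$ sending $y^\perp$ to $\tilde y^\perp$) is slightly cleaner than the paper's explicit Gram--Schmidt construction, which is undefined in the case $\tilde x_1=-x_1$.
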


\begin{proof}
Assume $y\neq\tilde{y}$, as otherwise the result holds trivially. Since $y$ and $\tilde{y}$ satisfy the given condition, we can express them as
\[
y=\cos\alpha x_0+\sin\alpha x_1,\qquad \tilde{y}=\cos\alpha x_0+\sin\alpha \tilde{x}_1,
\]
for some $\alpha\in[0,\pi]$ and $x_1, \tilde{x}_1\in\bbs^\dm$ perpendicular to $x_0$. Since $y\neq\tilde{y}$ we also have $x_1\neq\tilde{x}_1$. 

Define 
\[
x_2=\frac{\tilde{x}_1-(\tilde{x}_1\cdot x_1)x_1 }{\|\tilde{x}_1-(\tilde{x}_1\cdot x_1)x_1\|},
\]
which is perpendicular to both $x_0$ and $x_1$. Then there exists $\beta\in[0, 2\pi)$ such that
\[
\tilde{x}_1=\cos\beta x_1+\sin\beta x_2.
\]
Also define
\[
\tilde{x}_2:=-\sin\beta x_1+\cos\beta x_2.
\]
Then, $\{x_1,x_2\}$ and $\{\tilde{x}_1,\tilde{x}_2\}$ are two distinct orthonormal bases of the same affine space. 

Now choose orthonormal bases of $\bbr^{\dm+1}$ ($\bbs^\dm$ is embedded in $\bbr^{\dm+1}$) as follows
\[
\{x_0, x_1, x_2, x_3,\cdots, x_\dm\},\quad \{x_0, \tilde{x}_1, \tilde{x}_2, x_3, \cdots, x_\dm\},
\]
and define the matrix $R$ by
\[
R=x_0x_0^\top+\tilde{x}_1 x_1^\top+\tilde{x}_2x_2^\top+\sum_{k=3}^{\dm} x_k x_k^\top.
\]
The matrix $R$ is orthogonal, and satisfies
\[
Rx_0=x_0,\qquad Ry=\tilde{y}.
\]
Using matrix $R$, we can express $\psi_{\tilde{y}}$ as follows:
\begin{equation}
\label{eqn:phiytilde-phiy}
\begin{aligned}
\psi_{\tilde{y}}(x)&=\frac{\rho(x)^{2-m}\tilde{y}\cdot(x-c_{\rho^{2-m}})}{\int_{\supp}\rho(z)^{2-m}\left(\tilde{y}\cdot (z-c_{\rho^{2-m}})\right)^2\dz}\\[2pt]
&=\frac{\rho(x)^{2-m}(R^{-1}\tilde{y})\cdot(R^{-1}(x-c_{\rho^{2-m}}))}{\int_{\supp}\rho(z)^{2-m}\left((R^{-1}\tilde{y})\cdot (R^{-1}(z-c_{\rho^{2-m}}))\right)^2\dz}\\[2pt]
&=\frac{\rho(x)^{2-m}y\cdot(R^{-1}x-c_{\rho^{2-m}})}{\int_{\supp}\rho(z)^{2-m}\left(y\cdot (R^{-1}z-c_{\rho^{2-m}})\right)^2\dz}\\[2pt]
&=\psi_y(R^{-1}x),
\end{aligned}
\end{equation}
where in the third equality we used that $R^{-1}c_{\rho^{2-m}}=c_{\rho^{2-m}}$, as $c_{\rho^{2-m}}$ is parallel to $x_0$. 

Using \eqref{eqn:phiytilde-phiy} we write 
$\mathcal{F}[\psi_{\tilde{y}}]$ as 
\begin{equation}
\begin{aligned}
\mathcal{F}[\psi_{\tilde{y}}]&=\frac{\int_{\supp}\rho(x)^{m-2}\psi_{\tilde{y}}(x)^2\dx}{\left \| \int_{\bbs^\dm} x \psi_{\tilde{y}}(x) \dx \right \|^2}\\[2pt]
&=\frac{\int_{\supp}\rho(x)^{m-2}\psi_{y}(R^{-1}x)^2\dx}{\left \| \int_{\bbs^\dm} x \psi_{y}(R^{-1}x) \dx \right \|^2}.
\end{aligned}
\label{eqn:Fphiytilde}
\end{equation}
Consider first the numerator of the r.h.s of \eqref{eqn:Fphiytilde}. Since $\rho$ is axially symmetric with respect to $x_0$ and $R$ is a rotation with respect to $x_0$, we have $\rho(x)=\rho(R^{-1}x)$. This implies
\begin{align*}
\int_{\supp}\rho(x)^{m-2}\psi_{y}(R^{-1}x)^2\dx 
& =\int_{\supp}\rho(R^{-1}x)^{m-2}\psi_{y}(R^{-1}x)^2\dx \\
& = \int_{\supp}\rho(x)^{m-2}\psi_{y}(x)^2\dx,
\end{align*}
where for the second equal sign we used that $x\mapsto Rx$ is an isometry on $\bbs^\dm$. 

For the denominator of the r.h.s of \eqref{eqn:Fphiytilde} we use again that $x\mapsto Rx$ is an isometry, to get
\[
\int_{\bbs^\dm} x \psi_{y}(R^{-1}x) \dx=R\int_{\bbs^\dm} (R^{-1}x) \psi_{y}(R^{-1}x) \dx=R\int_{\bbs^\dm} x \psi_{y}(x) \dx.
\]
Then, since $R$ is an orthogonal matrix we find
\[
\left\|\int_{\bbs^\dm} x \psi_{y}(R^{-1}x) \dx\right\|^2=\left\|R\int_{\bbs^\dm} x \psi_{y}(x) \dx\right\|^2=\left\|\int_{\bbs^\dm} x \psi_{y}(x) \dx\right\|^2.
\]
Combining the results, we get from \eqref{eqn:Fphiytilde} that
\[
\mathcal{F}[\psi_{\tilde{y}}]=\frac{\int_{\supp}\rho(x)^{m-2}\psi_{y}(x)^2\dx}{\left\|\int_{\bbs^\dm} x \psi_{y}(x) \dx\right\|^2}=\mathcal{F}[\psi_y].
\]
\end{proof}

\begin{lemma}\label{lem:w-average}
Let $X$ and $Y$ be positive numbers. Then,
\[
\max_{0\leq\alpha\leq \pi}\frac{(\cos^2\alpha) X^2+ (\sin^2\alpha) Y^2}{(\cos^2\alpha) X+(\sin^2\alpha) Y}=\max(X,Y).
\]
\end{lemma}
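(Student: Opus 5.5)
Write $p=\cos^2\alpha$ and $q=\sin^2\alpha$, so $p,q\geq 0$ and $p+q=1$. As $\alpha$ ranges over $[0,\pi]$, the pair $(p,q)$ covers every convex combination, including the endpoints $(1,0)$ at $\alpha=0$ and $(0,1)$ at $\alpha=\pi/2$. The denominator $pX+qY$ is always strictly positive because $X,Y>0$, so the quotient is well defined for every $\alpha$.

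The argument has two parts. First, the upper bound. Assume without loss of generality that $X\geq Y$. Then
\[
pX^2+qY^2 \leq pX\cdot X+qY\cdot X = X\,(pX+qY),
\]
where the inequality uses $Y\leq X$ together with $q,Y\geq 0$ to get $qY^2\leq qYX$. Dividing by $pX+qY>0$ gives the quotient $\leq X=\max(X,Y)$ for every $\alpha$. The case $Y\geq X$ is symmetric.

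Second, the bound is attained. At $\alpha=0$ the quotient equals $X^2/X=X$, and at $\alpha=\pi/2$ it equals $Y^2/Y=Y$. So the value $\max(X,Y)$ is reached, and the supremum is a maximum.

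Equivalently, the quotient is the weighted mean of the values $X$ and $Y$ with weights $pX$ and $qY$. A weighted mean never exceeds the larger of the two values, and it equals that value when all the weight sits on it.

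There is no real obstacle in this argument. The only points needing care are that the denominator stays positive (it does, since $X,Y>0$) and that the endpoints $\alpha=0$ and $\alpha=\pi/2$ lie in the range $[0,\pi]$, so the maximum is actually attained rather than only approached.
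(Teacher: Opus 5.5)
Your proof is correct and follows essentially the paper's argument. The paper also treats the quotient as a weighted average of $X$ and $Y$ with weights $(\cos^2\alpha)X$ and $(\sin^2\alpha)Y$ to get the bound $\max(X,Y)$, then attains it at $\alpha=0$ and $\alpha=\pi/2$; your inequality $pX^2+qY^2\leq X(pX+qY)$ is that weighted-average bound written out explicitly.
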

\begin{proof}
Since $(\cos^2\alpha) X$ and $(\sin^2\alpha) Y$ are both nonnegative (and not simultaneously zero), the given fraction can be considered as a weighted average of $X$ and $Y$:
\[
\frac{(\cos^2\alpha X)X+(\sin^2\alpha Y)Y}{(\cos^2\alpha) X+(\sin^2\alpha) Y}=pX+(1-p)Y,
\]
where
\[
p=\frac{(\cos^2\alpha) X}{(\cos^2\alpha) X+(\sin^2\alpha) Y}\in[0,1].
\]
Therefore, the weighted average satisfies
\[
\frac{(\cos^2\alpha) X^2+(\sin^2\alpha) Y^2}{(\cos^2\alpha) X+(\sin^2\alpha) Y}\leq\max(X,Y).
\]
If we substitute $\alpha=0$ and $\alpha=\frac{\pi}{2}$, we find $X$ and $Y$, respectively. Therefore, the maximum is achieved and we get the desired result.
\end{proof}

A key result which will be used to minimize $\calF$, is given by the following proposition. 
\begin{proposition}\label{prop:minphiy}
Consider $\psi_y$ defined as in \eqref{eqn:phicy} for generic $y \in \bbs^\dm$, and let $x_1 \in \bbs^\dm$ be a fixed (arbitrary) vector orthogonal to $x_0$. Then,
\begin{equation}
\label{eqn:minF=min}
\min_{y\in\bbs^\dm}\mathcal{F}[\psi_y]=\min\left(\mathcal{F}[\psi_{x_0}], \mathcal{F}[\psi_{x_1}]\right).
\end{equation}
\end{proposition}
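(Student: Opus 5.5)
We compute $\mathcal{F}[\psi_y]$ explicitly for a general unit vector $y$ and then apply Lemma \ref{lem:w-average}. By Lemma \ref{lem:F=F}, $\mathcal{F}[\psi_y]$ depends only on $y\cdot x_0$. So we may write any $y\in\bbs^\dm$ as
\[
y=\cos\alpha\, x_0+\sin\alpha\, x_1,\qquad \alpha\in[0,\pi],
\]
with $x_1$ the fixed vector orthogonal to $x_0$ from the statement. It then suffices to show that
\[
\min_{\alpha\in[0,\pi]}\mathcal{F}[\psi_{\cos\alpha x_0+\sin\alpha x_1}]=\min\left(\mathcal{F}[\psi_{x_0}],\mathcal{F}[\psi_{x_1}]\right).
\]

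Introduce the two positive quantities
\[
P=\int_{\supp}\rho(x)^{2-m}\left(x_0\cdot(x-c_{\rho^{2-m}})\right)^2\dx,\qquad Q=\int_{\supp}\rho(x)^{2-m}(x_1\cdot x)^2\dx .
\]
Since $c_{\rho^{2-m}}$ is parallel to $x_0$, we have $y\cdot(x-c_{\rho^{2-m}})=\cos\alpha\, x_0\cdot(x-c_{\rho^{2-m}})+\sin\alpha\, x_1\cdot x$. When we expand the square in the normalizing integral of \eqref{eqn:phicy}, the cross term is
\[
\int\rho^{2-m}\left(x_0\cdot(x-c_{\rho^{2-m}})\right)(x_1\cdot x)\dx ,
\]
and this vanishes by the reflection argument of Lemma \ref{ppcp}. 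Hence the normalizing integral equals $D_\alpha:=\cos^2\alpha\,P+\sin^2\alpha\,Q$.

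For the numerator of $\mathcal{F}$, we have $\rho^{m-2}\psi_y^2=\rho^{2-m}(y\cdot(x-c_{\rho^{2-m}}))^2/D_\alpha^2$, so the numerator is $1/D_\alpha$.

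For the first moment, linearity together with the computations in Lemma \ref{ppcp} gives
\[
\mu_{\psi_y}=\frac{P\cos\alpha\,\mu_{\psi_{x_0}}+Q\sin\alpha\,\mu_{\psi_{x_1}}}{D_\alpha}=\frac{P\cos\alpha\, x_0+Q\sin\alpha\, x_1}{D_\alpha}.
\]
Here we use that the unnormalized moments are $P\,x_0$ and $Q\,x_1$, which is exactly what Lemma \ref{ppcp} established. It follows that $\|\mu_{\psi_y}\|^2=(P^2\cos^2\alpha+Q^2\sin^2\alpha)/D_\alpha^2$, and therefore
\[
\mathcal{F}[\psi_y]=\frac{\cos^2\alpha\,P+\sin^2\alpha\,Q}{\cos^2\alpha\,P^2+\sin^2\alpha\,Q^2}.
\]
The special cases are $\mathcal{F}[\psi_{x_0}]=1/P$ and $\mathcal{F}[\psi_{x_1}]=1/Q$.

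Taking reciprocals and applying Lemma \ref{lem:w-average} with $X=P$, $Y=Q$, the maximum of $1/\mathcal{F}[\psi_y]$ over $\alpha$ is $\max(P,Q)$, and it is attained at $\alpha=0$ or $\alpha=\pi/2$. Thus $\min_y\mathcal{F}[\psi_y]=\min(1/P,1/Q)$, which is \eqref{eqn:minF=min}.

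The main point to check is that the cross terms vanish. This holds both in $D_\alpha$ and in the moment, where the cross contributions are the off-diagonal components already shown to be zero in Lemma \ref{ppcp}. One also needs $P,Q>0$, so that the quantities are finite and Lemma \ref{lem:w-average} applies. Positivity follows because the integrands are nonnegative, not identically zero on a set of positive measure, and integrable by the preceding remark on the integrability of $\rho^{2-m}$.
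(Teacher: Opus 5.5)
Your proposal is correct and follows the paper's proof essentially step by step. You reduce to $y=\cos\alpha\,x_0+\sin\alpha\,x_1$ via Lemma \ref{lem:F=F}, kill the cross term by reflection, write $\mathcal{F}[\psi_y]^{-1}$ as a weighted average of $\mathcal{F}[\psi_{x_0}]^{-1}$ and $\mathcal{F}[\psi_{x_1}]^{-1}$, and conclude with Lemma \ref{lem:w-average}. The one cosmetic difference is that you obtain $\mu_{\psi_y}$ directly by linearity from $\mu_{\psi_{x_0}}=x_0$ and $\mu_{\psi_{x_1}}=x_1$ (Lemma \ref{ppcp}), instead of recomputing its components against $x_0,x_1,x_2$ as the paper does; this is a slightly cleaner route to the same formula.
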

\begin{proof}
Let $y$ be an arbitrary vector in $\bbs^\dm$. By Lemma \ref{lem:F=F}, we can assume without loss of generality that $y$ can be written as
\begin{equation}
\label{eqn:y-decomp}
y=\cos\alpha \, x_0+\sin\alpha \, x_1,
\end{equation}
for some $\alpha \in [0,\pi]$, where $x_1$ is the fixed vector orthogonal to $x_0$.  By \eqref{eqn:calF} and the definition of $\psi_y$, we calculate
\begin{equation}
\label{eqn:calF-phiy}
\begin{aligned}
\mathcal{F}[\psi_y]&=\frac{\int_{\supp}\rho(x)^{m-2}\psi_y(x)^2\dx}{\|\mu_{\psi_y}\|^2} \\[2pt]
&=\frac{1}{\|\mu_{\psi_y}\|^2\int_{\supp}\rho(x)^{2-m}\left(y\cdot (x-c_{\rho^{2-m}})\right)^2\dx}.
\end{aligned}
\end{equation}

Then, we compute
\begin{equation}
\label{eqn:Fphiy-calc}
\begin{aligned}
&\int_{\supp}\rho(x)^{2-m}\left(y\cdot (x-c_{\rho^{2-m}})\right)^2\dx\\
&=\cos^2\alpha\int_{\supp}\rho(x)^{2-m}(x_0\cdot(x-c_{\rho^{2-m}}))^2\dx+\sin^2\alpha\int_{\supp}\rho(x)^{2-m}(x_1\cdot(x-c_{\rho^{2-m}}))^2\dx\\
&\quad +2\cos\alpha\sin\alpha\int_{\supp}\rho(x)^{2-m}(x_0\cdot(x-c_{\rho^{2-m}}))(x_1\cdot(x-c_{\rho^{2-m}}))\dx \\[2pt]
&=\cos^2\alpha\, \mathcal{F}[\psi_{x_0}]^{-1}+\sin^2\alpha \, \mathcal{F}[\psi_{x_1}]^{-1}\\[2pt]
&\quad +2\cos\alpha\sin\alpha\int_{\supp}\rho(x)^{2-m}(x_0\cdot(x-c_{\rho^{2-m}}))(x_1\cdot(x-c_{\rho^{2-m}}))\dx.
\end{aligned}
\end{equation}

We will show that the last term in the r.h.s. above is zero. Indeed, using that $c_{\rho^{2-m}}$ is parallel to $x_0$, we get
\begin{align*}
&\int_{\supp}\rho(x)^{2-m}(x_0\cdot(x-c_{\rho^{2-m}}))(x_1\cdot(x-c_{\rho^{2-m}}))\dx\\
&\qquad =\int_{\supp}\rho(x)^{2-m}(x_0\cdot(x-c_{\rho^{2-m}}))(x_1\cdot x)\dx.
\end{align*}
Denote by $r(x)$ the reflection across the hyperplane orthogonal to $x_1$. Then, by the axial symmetry of $\rho$ abound $x_0$, we continue the calculation above and further obtain
\begin{align}
\begin{aligned}\label{symmetric0}
& \int_{\supp}\rho(x)^{2-m}(x_0\cdot (x-c_{\rho^{2-m}}))(x_1\cdot x)\dx \\
&\qquad =\frac{1}{2}\int_{\supp}\rho(x)^{2-m}(x_0\cdot (x-c_{\rho^{2-m}}))(x_1\cdot x)\dx \\
& \qquad \quad +\frac{1}{2}\int_{\supp}\rho(r(x))^{2-m}(x_0\cdot (r(x)-c_{\rho^{2-m}}))(x_1\cdot r(x))\dx\\
& \qquad =\frac{1}{2}\int_{\supp}\rho(x)^{2-m}(x_0\cdot (x-c_{\rho^{2-m}}))(x_1\cdot x)\dx \\
& \qquad \quad -\frac{1}{2}\int_{\supp}\rho(x)^{2-m}(x_0\cdot (x-c_{\rho^{2-m}}))(x_1\cdot x)\dx\\
&\qquad =0.
\end{aligned}
\end{align}
Therefore, by \eqref{eqn:Fphiy-calc} we have
\begin{equation}
\label{eqn:denom-t2}
\int_{\supp}\rho(x)^{2-m}\left(y\cdot (x-c_{\rho^{2-m}})\right)^2\dx=\cos^2\alpha \, \mathcal{F}[\psi_{x_0}]^{-1}+\sin^2\alpha \, \mathcal{F}[\psi_{x_1}]^{-1}.
\end{equation}

By \eqref{eqn:denom-t2}, we have simplified one of the terms in the denominator of \eqref{eqn:calF-phiy}. Next, we look into $\|\mu_{\psi_y}\|$. Using the definition \eqref{eqn:phicy} of $\psi_y$, we have
\begin{equation}
\label{eqn:muphiy}
\begin{aligned}
\mu_{\psi_y}&=\int_{\supp} \psi_y(x)x\, \dx\\
&=\frac{\int_{\supp}\rho(x)^{2-m}y\cdot(x-c_{\rho^{2-m}})x\, \dx}{\int_{\supp}\rho(x)^{2-m}\left( y\cdot (x-c_{\rho^{2-m}})\right)^2\dx}.
\end{aligned}
\end{equation}
The denominator in \eqref{eqn:muphiy} was calculated in \eqref{eqn:denom-t2}. For the numerator, we use \eqref{eqn:y-decomp} to write it as
\begin{align*}
\mathcal{N}=&\cos\alpha\int_{\supp}\rho(x)^{2-m}x_0\cdot(x-c_{\rho^{2-m}})x\dx+\sin\alpha\int_{\supp}\rho(x)^{2-m}x_1\cdot(x-c_{\rho^{2-m}})x\dx.
\end{align*}

We first compute
\begin{align*}
\mathcal{N}\cdot x_0&=\cos\alpha\int_{\supp}\rho(x)^{2-m}\left(x_0\cdot(x-c_{\rho^{2-m}})\right)(x\cdot x_0)\dx\\[2pt]
&\quad +\sin\alpha\int_{\supp}\rho(x)^{2-m}\left(x_1\cdot(x-c_{\rho^{2-m}})\right)(x\cdot x_0)\dx.
\end{align*}
The second term in the r.h.s. is zero from a calculation similar to \eqref{symmetric0}. Also, the first term can be rewritten using \eqref{eqn:crhomm2} to get
\begin{equation}
\begin{aligned}
\label{eqn:Ndotx0}
\mathcal{N}\cdot x_0&=\cos\alpha\int_{\supp}\rho(x)^{2-m}\left(x_0\cdot(x-c_{\rho^{2-m}})\right)(x\cdot x_0)\dx\\
&=\cos\alpha\int_{\supp}\rho(x)^{2-m}\left(x_0\cdot(x-c_{\rho^{2-m}})\right)^2\dx \\
&=\cos\alpha\mathcal{F}[\psi_{x_0}]^{-1},
\end{aligned}
\end{equation}
where for the last equal sign we used that $\|\mu_{\psi_{x_0}}\|=1$ (cf. Lemma \ref{ppcp}); see also \eqref{eqn:calF-phiy}.

Next, we consider the dot product with $x_1$:
\begin{align*}
\mathcal{N}\cdot x_1&=\cos\alpha\int_{\supp}\rho(x)^{2-m}\left(x_0\cdot(x-c_{\rho^{2-m}})\right)(x\cdot x_1)\dx\\
&\quad +\sin\alpha\int_{\supp}\rho(x)^{2-m}\big(x_1\cdot (x-c_{\rho^{2-m}})\big)(x\cdot x_1)\dx.
\end{align*}
The first term in the r.h.s. is zero by \eqref{symmetric0}. Then, we use $x_1\cdot c_{\rho^{2-m}}=0$ to get
\begin{equation}
\label{eqn:Ndotx1}
\begin{aligned}
\mathcal{N}\cdot x_1 &=\sin\alpha\int_{\supp}\rho(x)^{2-m}\big(x_1\cdot (x-c_{\rho^{2-m}})\big)^2\dx\\
&=\sin\alpha\mathcal{F}[\psi_{x_1}]^{-1},
\end{aligned}
\end{equation}
where we used again Lemma \ref{ppcp} (which gives $\|\mu_{\psi_{x_1}}\|=1$) and \eqref{eqn:calF-phiy}.

Now, choose a direction $x_2$ perpendicular to both $x_0$ and $x_1$. We then have
\begin{align*}
\mathcal{N}\cdot x_2&=\cos\alpha\int_{\supp}\rho(x)^{2-m}\left(x_0\cdot(x-c_{\rho^{2-m}})\right)(x\cdot x_2)\dx\\
&\quad +\sin\alpha\int_{\supp}\rho(x)^{2-m}\left(x_1\cdot(x-c_{\rho^{2-m}})\right)(x\cdot x_2)\dx.
\end{align*}
Each of the integrals in the r.h.s above is zero. This can be shown by a very similar calculation as in \eqref{symmetric0}, but performing instead a symmetry with respect to the plane perpendicular to $x_2$. We leave this simple exercise to the reader. Hence, we obtain
\[
\mathcal{N}\cdot x_2=0.
\]

Combining the above with \eqref{eqn:Ndotx0} and \eqref{eqn:Ndotx1}, we find
\[
\mathcal{N}=\left(\cos\alpha\mathcal{F}[\psi_{x_0}]^{-1}\right)x_0+\left(\sin\alpha\mathcal{F}[\psi_{x_1}]^{-1}\right)x_1,
\]
which together with \eqref{eqn:denom-t2} and \eqref{eqn:muphiy} yield
\begin{equation}
\label{eqn:norm-muphiy-s}
\|\mu_{\psi_y}\|^2=\frac{\left(\cos\alpha\mathcal{F}[\psi_{x_0}]^{-1}\right)^2+\left(\sin\alpha\mathcal{F}[\psi_{x_1}]^{-1}\right)^2}{\left(\cos^2\alpha \, \mathcal{F}[\psi_{x_0}]^{-1}+\sin^2\alpha \, \mathcal{F}[\psi_{x_1}]^{-1}\right)^2}.
\end{equation}

From \eqref{eqn:calF-phiy}, \eqref{eqn:denom-t2} and \eqref{eqn:norm-muphiy-s}, we now find
\[
\mathcal{F}[\psi_y]=\frac{\cos^2\alpha \, \mathcal{F}[\psi_{x_0}]^{-1}+\sin^2\alpha \, \mathcal{F}[\psi_{x_1}]^{-1}}{\left(\cos\alpha\mathcal{F}[\psi_{x_0}]^{-1}\right)^2+\left(\sin\alpha\mathcal{F}[\psi_{x_1}]^{-1}\right)^2}.
\]
We want to find the minimum value of $\mathcal{F}[\psi_y]$. Consider instead the complementary problem, which is to find the maximum of 
\[
\mathcal{F}[\psi_y]^{-1}=\frac{ \cos^2\alpha \left( \mathcal{F}[\psi_{x_0}]^{-1}\right)^2+ \sin^2\alpha \left( \mathcal{F}[\psi_{x_1}]^{-1}\right)^2}{\cos^2\alpha \, \mathcal{F}[\psi_{x_0}]^{-1}+\sin^2\alpha \, \mathcal{F}[\psi_{x_1}]^{-1}}
\]
over $\alpha \in [0,\pi]$. We can find the maximum value of $\mathcal{F}[\psi_y]^{-1}$ using Lemma \ref{lem:w-average} with $X=\mathcal{F}[\psi_{x_0}]^{-1}$ and $Y=\mathcal{F}[\psi_{x_1}]^{-1}$. We find
\[
\max_{y\in\bbs^\dm}\mathcal{F}[\psi_y]^{-1}=\max\left(\mathcal{F}[\psi_{x_0}]^{-1},\mathcal{F}[\psi_{x_1}]^{-1}\right),
\]
which in turn implies \eqref{eqn:minF=min}.
\end{proof}

Combining various of the results above we can state the following proposition.
\begin{proposition}
\label{prop:minF-tildeA} 
Consider an equilibrium $\rho$ with normalized centre of mass at $x_0$ (see \eqref{eqn:equil} and \eqref{eqn:x0}) and  the functional $\calF[\psi]$ defined by \eqref{eqn:calF}. Let $x_1 \in \bbs^\dm$ be a fixed vector orthogonal to $x_0$. Then, we have
\[
\min_{\psi\in\tilde{\mathcal{A}}}\mathcal{F}[\psi]=\min\left(\mathcal{F}[\psi_{x_0}], \mathcal{F}[\psi_{x_1}]\right),
\]
where $\tilde{\calA}$ was defined in \eqref{eqn:calAtilde}; see also \eqref{eqn:phicy}.
\end{proposition}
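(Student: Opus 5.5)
The plan is to chain together the reductions already established. First, by \eqref{infequiv}, the infimum of $\calF$ over $\tilde{\calA}$ equals its infimum over $\calC'$, the normalized critical points. Every $\psi\in\calC$ is of the form $\alpha\psi_y$ with $y=\mu_\psi/\|\mu_\psi\|\in\bbs^\dm$, and $\calF$ is invariant under scaling. Hence
\[
\inf_{\psi\in\calC'}\calF[\psi]\ \geq\ \inf_{y\in\bbs^\dm}\calF[\psi_y],
\]
where the right-hand side ranges over all $\psi_y$ defined by \eqref{eqn:phicy}, whether or not they are consistent critical points. Proposition \ref{prop:minphiy} identifies this right-hand side as $\min(\calF[\psi_{x_0}],\calF[\psi_{x_1}])$, and the minimum is attained.

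For the reverse inequality, Lemma \ref{ppcp} shows that $\psi_{x_0}$ and $\psi_{x_1}$ both belong to $\calC'\subset\tilde{\calA}$. Here one should check that they have zero mass, which holds because of the subtraction of $c_{\rho^{2-m}}$, and that they lie in $L^1$, which follows from the integrability of $\rho^{2-m}$ in the Remark. Therefore the infimum over $\tilde{\calA}$ is at most $\min(\calF[\psi_{x_0}],\calF[\psi_{x_1}])$, and it is attained. Together the two inequalities give the claim, with ``inf'' upgraded to ``min''.

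The main obstacle is the first equality in \eqref{infequiv}: that the infimum over $\tilde{\calA}$ is realized by critical points. As written, this is only heuristic. To make it rigorous, I would avoid the existence question and argue directly by Cauchy--Schwarz.

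For any $\psi\in\tilde{\calA}$ with $\mu_\psi\ne 0$, set $y=\mu_\psi/\|\mu_\psi\|$. Using zero mass,
\[
\|\mu_\psi\|=\int_{\supp}\psi(x)\,y\cdot(x-c_{\rho^{2-m}})\dx .
\]
Writing the integrand as $\rho^{\frac{m-2}{2}}\psi\cdot\rho^{\frac{2-m}{2}}\,y\cdot(x-c_{\rho^{2-m}})$ and applying Cauchy--Schwarz gives
\[
\|\mu_\psi\|^2\leq \left(\int_{\supp}\rho^{m-2}\psi^2\dx\right)\int_{\supp}\rho^{2-m}\big(y\cdot(x-c_{\rho^{2-m}})\big)^2\dx .
\]
Hence $\calF[\psi]\ge \bigl(\int_{\supp}\rho^{2-m}(y\cdot(x-c_{\rho^{2-m}}))^2\dx\bigr)^{-1}$.

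By \eqref{eqn:denom-t2}, this last integral equals $\cos^2\alpha\,\calF[\psi_{x_0}]^{-1}+\sin^2\alpha\,\calF[\psi_{x_1}]^{-1}$ (after the rotation of Lemma \ref{lem:F=F}). It is therefore at most $\max(\calF[\psi_{x_0}]^{-1},\calF[\psi_{x_1}]^{-1})$. This yields $\calF[\psi]\ge\min(\calF[\psi_{x_0}],\calF[\psi_{x_1}])$ directly, without needing to know a priori that minimizers exist. Perturbations with $\mu_\psi=0$ give $\calF=+\infty$ and can be ignored. I would present this Cauchy--Schwarz argument as the core, with the earlier lemmas supplying attainment.
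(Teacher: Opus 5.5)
Your proposal is correct, and its core is a genuinely different argument from the paper's.

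The paper's proof is exactly your first two paragraphs. It restricts to $\calC'$ via \eqref{infequiv} and uses the inclusion $\calC'\subset\{\psi_y: y\in\bbs^\dm\}$. It then invokes Proposition \ref{prop:minphiy}, which computes $\|\mu_{\psi_y}\|$ explicitly and maximizes a weighted average via Lemma \ref{lem:w-average}, and it uses Lemma \ref{ppcp} for attainment. As you point out, the first equality in \eqref{infequiv} is asserted rather than proved. It presupposes that the infimum over $\tilde{\calA}$ is attained, and at a function where the first-variation argument is legitimate.

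Your Cauchy--Schwarz bound $\calF[\psi]\geq\bigl(\int_{\supp}\rho^{2-m}(y\cdot(x-c_{\rho^{2-m}}))^2\dx\bigr)^{-1}$, with $y=\mu_\psi/\|\mu_\psi\|$, holds for every $\psi\in\tilde{\calA}$ (trivially if the numerator of $\calF$ is infinite). So no existence or Euler--Lagrange step is needed. Combined with \eqref{eqn:denom-t2}, it also makes Proposition \ref{prop:minphiy} and Lemma \ref{lem:w-average} unnecessary for this statement. Lemma \ref{ppcp} is then used only for attainment and to identify the two constants as $\calF[\psi_{x_0}]^{-1}$ and $\calF[\psi_{x_1}]^{-1}$.

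In linear-algebra terms, you show that $\min_{\tilde{\calA}}\calF=1/\lambda_{\max}(M)$ for the weighted covariance matrix $M=\int_{\supp}\rho^{2-m}(x-c_{\rho^{2-m}})(x-c_{\rho^{2-m}})^\top\dx$. Its eigenvalues are $\calF[\psi_{x_0}]^{-1}$ on $x_0$ and $\calF[\psi_{x_1}]^{-1}$ on $x_0^\perp$.

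A bonus of your route is that the equality case of Cauchy--Schwarz characterizes all minimizers in $\tilde{\calA}$: they are the nonzero multiples of $\psi_y$ with $y$ in the top eigenspace of $M$. This is what the argument for Theorem \ref{thm:minF}(c) actually relies on, and the paper obtains it only through the unproved reduction to critical points. The paper's route, in exchange, gives the explicit value of $\calF[\psi_y]$ for every $y$, which this proposition does not need.
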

\begin{proof}
By \eqref{infequiv}, it is enough to minimize $\calF$ over $\calC'$, where $\calC'$ was defined in \eqref{eqn:Cprime}. Let $\psi\in\mathcal{C}'$; then $\psi$ solves \eqref{eqn:phi-final} and $\|\mu_\psi\|=1$. Therefore, if we set $z=\mu_\psi$ then $\psi$ can be expressed as
\[
\psi=\psi_z,
\]
with $\psi_z$ defined by \eqref{eqn:phicy}. In particular, $\psi\in\{\psi_y:y\in\bbs^\dm\}$ and hence,
\begin{align}\label{cpsub}
\mathcal{C}'\subset\{\psi_y:y\in\bbs^\dm\}.
\end{align}

We can now establish the following relationships:
\[
\min\left(\mathcal{F}[\psi_{x_0}], \mathcal{F}[\psi_{x_1}]\right)\geq\inf_{\psi \in\mathcal{C}'}\mathcal{F}[\psi]\geq\inf_{y\in\bbs^\dm}\mathcal{F}[\psi_y]=\min\left(\mathcal{F}[\psi_{x_0}], \mathcal{F}[\psi_{x_1}]\right),
\]
where for the first inequality we used Lemma \ref{ppcp}, the second inequality follows from \eqref{cpsub}, and for the last equality we used Proposition \ref{prop:minphiy}. From here we infer 
\[
\min_{\psi \in\mathcal{C}'}\mathcal{F}[\psi]=\min\left(\mathcal{F}[\psi_{x_0}], \mathcal{F}[\psi_{x_1}]\right),
\]
where we used $\min$ instead of $\inf$ since $\psi_{x_0}$ and $\psi_{x_1}$ are elements of $\mathcal{C}'$. Together with \eqref{infequiv}, this shows the desired result.
\end{proof}

\begin{proposition} 
\label{prop:calF-x0x1-ABC}
Consider the equilibrium $\rho$, and $\psi_{x_0}$, $\psi_{x_1}$ as in Proposition \ref{prop:minF-tildeA}. Then, using the notations \eqref{eqn:ABC}, $\mathcal{F}[\psi_{x_0}]$ and $\mathcal{F}[\psi_{x_1}]$ can be expressed as
\begin{equation}
\label{eqn:calF-x0x1-ABC}
\mathcal{F}[\psi_{x_0}]^{-1}=C(\rho)-\frac{B(\rho)^2}{A(\rho)},\quad \text{ and } \quad \mathcal{F}[\psi_{x_1}]^{-1}=\frac{1}{\dm}(A(\rho)-C(\rho)).
\end{equation}
\end{proposition}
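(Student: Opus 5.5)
We start from the expression for $\mathcal{F}[\psi_y]$ obtained in the proof of Proposition \ref{prop:minphiy}, namely \eqref{eqn:calF-phiy}, combined with Lemma \ref{ppcp}, which gives $\|\mu_{\psi_y}\|=1$ for $y=x_0$ and $y=x_1$. Together these yield
\[
\mathcal{F}[\psi_{y}]^{-1}=\int_{\supp}\rho(x)^{2-m}\bigl(y\cdot (x-c_{\rho^{2-m}})\bigr)^2\dx,\qquad y\in\{x_0,x_1\}.
\]
The task therefore reduces to computing these two weighted second moments in terms of $A(\rho)$, $B(\rho)$ and $C(\rho)$ from \eqref{eqn:ABC}.

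\textbf{Case $y=x_0$.} By axial symmetry, $c_{\rho^{2-m}}$ is parallel to $x_0$. From \eqref{eqn:crhomm2} we get $x_0\cdot c_{\rho^{2-m}}=B(\rho)/A(\rho)$. Expanding the square gives
\[
\int\rho^{2-m}\bigl(x_0\cdot x-B/A\bigr)^2=C-2\frac{B^2}{A}+\frac{B^2}{A^2}A=C-\frac{B^2}{A}.
\]
This is the first identity.

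\textbf{Case $y=x_1$.} Here $x_1\cdot c_{\rho^{2-m}}=0$, so the integral is $\int_{\supp}\rho^{2-m}(x_1\cdot x)^2\dx$. Choose an orthonormal basis $\{x_0,x_1,\dots,x_\dm\}$ of $\bbr^{\dm+1}$. By the rotational invariance of $\rho$ about $x_0$, which was already exploited in Lemma \ref{lem:F=F} via orthogonal maps fixing $x_0$, the integrals $\int\rho^{2-m}(x_k\cdot x)^2$ coincide for all $k=1,\dots,\dm$. Since $x\in\bbs^\dm$, we have $\sum_{k=0}^{\dm}(x_k\cdot x)^2=\|x\|^2=1$. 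Summing over $k$ then gives
\[
\dm\int\rho^{2-m}(x_1\cdot x)^2=\int\rho^{2-m}\bigl(1-(x_0\cdot x)^2\bigr)=A-C,
\]
which is the second identity.

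The only point that needs care is making sure each step is legitimate. The rotation argument requires an orthogonal map sending $x_1$ to $x_k$ while fixing $x_0$; this can be built exactly as the matrix $R$ in Lemma \ref{lem:F=F}, or simply taken as a coordinate permutation. All the integrals involved are finite by the integrability remark on $\rho^{2-m}$, so both the expansion of the square and the summation over $k$ are justified. No obstacle beyond this bookkeeping is expected.
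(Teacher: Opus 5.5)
Your proof is correct and matches the paper's argument essentially line by line. You both reduce, via \eqref{eqn:calF-phiy} and $\|\mu_{\psi_y}\|=1$, to weighted second moments of $\rho^{2-m}$. You then expand the square in the $x_0$ case, and in the $x_1$ case use $x_1\cdot c_{\rho^{2-m}}=0$ together with axial symmetry and $\sum_{k=0}^{\dm}(x_k\cdot x)^2=1$. The only difference is minor: your $x_0$ case is slightly more direct, since it uses $x_0\cdot c_{\rho^{2-m}}=B(\rho)/A(\rho)$ where the paper passes through $\|c_{\rho^{2-m}}\|^2$.
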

\begin{proof}
Using that $c_{\rho^{2-m}}$ is parallel to $x_0$, we simplify $\mathcal{F}[\psi_{x_0}]$ as (see \eqref{eqn:calF-phiy} and note that $\|\mu_{\psi_{x_0}}\|= 1$):
\begin{align*}
\mathcal{F}[\psi_{x_0}]^{-1}&=\int_{\supp}\rho(x)^{2-m}(x_0\cdot(x-c_{\rho^{2-m}}))^2\dx\\
&=\int_{\supp}\rho(x)^{2-m}\left((x_0\cdot x)^2-2(x_0\cdot x)(x_0\cdot c_{\rho^{2-m}})+(x_0\cdot c_{\rho^{2-m}})^2\right)\dx\\
&=\int_{\supp}\rho(x)^{2-m}(x_0\cdot x)^2\dx-(x_0\cdot c_{\rho^{2-m}})^2\int_{\supp}\rho(x)^{2-m}\dx\\
&=\int_{\supp}\rho(x)^{2-m}(x_0\cdot x)^2\dx-\|c_{\rho^{2-m}}\|^2\int_{\supp}\rho(x)^{2-m}\dx,
\end{align*}
where for the third equal sign we used \eqref{eqn:crhomm2}. Then, using the notations \eqref{eqn:ABC}, we write
\begin{align*}
\mathcal{F}[\psi_{x_0}]^{-1} &=C(\rho)-\left(\frac{B(\rho)}{A(\rho)}\right)^2A(\rho)\\
&=C(\rho)-\frac{B(\rho)^2}{A(\rho)}.
\end{align*}

Also, we find from \eqref{eqn:calF-phiy} (use $\|\mu_{\psi_{x_1}}\| = 1$):
\begin{align*}
\mathcal{F}[\psi_{x_1}]^{-1}&=\int_{\supp}\rho(x)^{2-m}(x_1\cdot(x-c_{\rho^{2-m}}))^2\dx\\
&=\int_{\supp}\rho(x)^{2-m}(x_1\cdot x)^2\dx.
\end{align*}
To simplify $\mathcal{F}[\psi_{x_1}]$ further, assume that $x_0, x_1, ..., x_\dm$ is an orthonormal basis of $\bbr^{\dm+1}$. By the axial symmetry of $\rho(x)^{2-m}$, we have
\begin{align*}
\int_{\supp}\rho(x)^{2-m}\dx&=\int_{\supp}\rho(x)^{2-m}\left((x_0\cdot x)^2+(x_1\cdot x)^2+\cdots+(x_\dm\cdot x)^2\right)\dx\\
&=\int_{\supp}\rho(x)^{2-m}(x_0\cdot x)^2\dx+\dm\int_{\supp}\rho(x)^{2-m}(x_1\cdot x)^2\dx,
\end{align*}
which yields
\begin{align*}
\mathcal{F}[\psi_{x_1}]^{-1}&=\frac{1}{\dm}\left(\int_{\supp}\rho(x)^{2-m}\dx-\int_{\supp}\rho(x)^{2-m}(x_0\cdot x)^2\dx\right)\\[2pt]
&=\frac{1}{\dm}\left(A(\rho)-C(\rho)\right).
\end{align*}
\end{proof}

\subsection{Minimizing $\calF[\psi]$ over $\psi \in \calA$} 

As noted in Remark \ref{rmk:minA}, if $\rho$ is strictly supported on the sphere and $m>2$, then $\psi_{x_0}$ and $\psi_{x_1}$ are not in $\mathcal{A}$ (as they are not bounded from below). Recall that our goal is to assess the sign of the second variation \eqref{eqn:d2Edeps-0}, and for this we need to minimize $\calF$ over $\psi \in \calA$.

The following lemma is key in connecting the minimizers on $\calA$ versus $\tilde{\calA}$.

\begin{lemma}\label{lem:approx} Let $y\in \bbs^\dm$ be either parallel or perpendicular to $x_0$. Then, there exists a sequence $\{\psi_{n, y}\}_{n\geq1}\subset \mathcal{A}$ such that
\[
\lim_{n\to\infty}\mathcal{F}[\psi_{n,y}]=\mathcal{F}[\psi_{y}].
\]
\end{lemma}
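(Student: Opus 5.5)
The plan is to truncate $\psi_y$ near the set where $\rho$ is small, correct the mass, and rescale. Homogeneity of $\calF$ then absorbs the fixed lower bound $-\frac{1}{\epsilon_0}\rho$ in the definition of $\calA$. The restriction on $y$ enters only through Lemma \ref{ppcp}, which gives $\mu_{\psi_y}=y\neq 0$. This guarantees that the limiting denominator in \eqref{eqn:calF} is nonzero and that $\calF[\psi_y]$ is finite.

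\emph{Construction.} Write $\psi_y=\rho^{2-m}g$ on $\supp$, where $g(x)=\gamma\, y\cdot(x-c_{\rho^{2-m}})$ is bounded and $\gamma>0$ is the normalizing constant from \eqref{eqn:phicy}. For $n$ large, set $K_n=\{x\in\supp:\rho(x)\geq 1/n\}$ and define
\[
\tilde\psi_n=\chi_{K_n}\psi_y-a_n\rho,\qquad a_n=\int_{K_n}\psi_y(x)\dx .
\]
Then $\int\tilde\psi_n\dx=0$, because $\int\rho\dx=1$, and $\mathrm{supp}(\tilde\psi_n)\subset\supp$. Since $\int_{\supp}\psi_y\dx=0$ and $\psi_y\in L^1$ (by the integrability of $\rho^{2-m}$ from the earlier remark), we get $a_n=-\int_{\supp\setminus K_n}\psi_y\dx\to0$ by dominated convergence. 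On $K_n$ we have $|\psi_y|\leq \|g\|_\infty\rho^{2-m}=\|g\|_\infty\rho^{1-m}\rho\leq \|g\|_\infty n^{m-1}\rho$. Off $K_n$, $\tilde\psi_n=-a_n\rho$. Hence $\tilde\psi_n\geq -M_n\rho$ with $M_n=\|g\|_\infty n^{m-1}+|a_n|$. Finally set $\psi_{n,y}=\delta_n\tilde\psi_n$ with $\delta_n=(\epsilon_0M_n)^{-1}$. Then $\psi_{n,y}\in\calA$ and $\calF[\psi_{n,y}]=\calF[\tilde\psi_n]$ by homogeneity.

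\emph{Convergence.} For the denominator, $\int x\tilde\psi_n\dx=\int_{K_n}x\psi_y\dx-a_nc_\rho\to\mu_{\psi_y}=y$ by dominated convergence, since $|x\psi_y|\le|\psi_y|\in L^1$. For the numerator, expand
\[
\int\rho^{m-2}\tilde\psi_n^2\dx=\int_{K_n}\rho^{2-m}g^2\dx-2a_n\int_{K_n}\rho\,g\dx+a_n^2\int\rho^m\dx .
\]
The first term converges by monotone convergence to $\int_{\supp}\rho^{2-m}g^2\dx$, which is finite and equals the numerator of $\calF[\psi_y]$. The second and third terms tend to zero, since $a_n\to0$, $g$ is bounded, and $\rho\in L^m$ (its explicit form shows it is bounded). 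Hence $\calF[\psi_{n,y}]\to\calF[\psi_y]$.

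The only delicate point is the compatibility with the fixed $\epsilon_0$ in $\calA$. The ratio $\psi_y/\rho$ is unbounded near the boundary of the support when $m>2$, so no truncation can be compared with $\rho$ uniformly in $n$. This is resolved by the rescaling $\delta_n\to0$, which is harmless because $\calF$ is $0$-homogeneous. The fully supported case is covered by the same argument: if $\rho$ is bounded below, $K_n=\bbs^\dm$ for large $n$; if $\rho$ vanishes at a single point, the truncation removes a shrinking neighbourhood of that point.
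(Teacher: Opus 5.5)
Your proof is correct. It shares the paper's overall strategy: make $\psi_y$ bounded below by a multiple of $-\rho$, keep zero mass, and then rescale into $\calA$ using the $0$-homogeneity of $\calF$. The construction is different, though.

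The paper keeps the ``weight times affine function'' form of a critical point. It caps the weight, setting $\tilde\psi^R_y=\rho\min(\rho^{1-m},R)\,y\cdot(x-c_{\rho\min(\rho^{1-m},R)})$, and gets zero mass by re-centring at the centre of mass of the capped weight. This obliges it to prove three things:
\begin{itemize}
\item $c_{\rho\min(\rho^{1-m},R)}\to c_{\rho^{2-m}}$;
\item $\tilde\psi^R_y\to\tilde\psi_y$ in $L^1$;
\item convergence of the numerator, via the pointwise bound $\rho^{m-2}|\tilde\psi_y^2-(\tilde\psi_y^R)^2|\le 4|\tilde\psi_y-\tilde\psi^R_y|$.
\end{itemize}

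You instead cut $\psi_y$ off on $\{\rho<1/n\}$ and restore zero mass by subtracting $a_n\rho$. That correction is free with respect to the constraint $\psi\ge-\frac{1}{\epsilon_0}\rho$. It also lets you expand the numerator exactly: the main term converges by monotone convergence and the cross terms vanish because $a_n\to0$.

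Your route is somewhat more elementary, since no centre-of-mass perturbation estimate is needed. It also extends easily. Truncating on $\{|\psi|\le n\rho\}$ instead, and bounding the cross term $\int\rho^{m-1}|\psi|$ by Cauchy--Schwarz, approximates in the same way any $\psi\in\tilde{\mathcal A}$ with $\int\rho^{m-2}\psi^2<\infty$ and $\mu_\psi\neq0$. The paper's version has the modest advantage that its approximants are themselves of critical-point form for a modified weight.
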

\begin{proof}
Let $y$ be a fixed vector either parallel or perpendicular to $x_0$. Recall \eqref{eqn:phicy}, and set
\[
\alpha:=\int_{\supp}\rho(x)^{2-m}\left(y\cdot (x-c_{\rho^{2-m}})\right)^2\dx.
\]
Define 
\[
\tilde{\psi}_y(x):=\alpha\psi_y(x)=\rho(x)^{2-m}y\cdot(x-c_{\rho^{2-m}}).
\]
Since the functional $\calF$ is homogeneous, we have
\begin{equation}
\label{eqn:Ftildephi=Fphi}
\mathcal{F}[\tilde{\psi}_y]=\mathcal{F}[\psi_y].
\end{equation}
As the function $\tilde{\psi}_y$ is much simpler in form and yields the same functional value as $\psi_y$, we will work with it instead.

For $R>0$ define
\[
\tilde{\psi}_y^R(x)=\rho(x)\min(\rho(x)^{1-m}, R)\left(y\cdot\left(x-c_{\rho\min(\rho^{1-m}, R)}\right)\right).
\]
By their construction, $\tilde{\psi}_y^R$ is expected to approximate $\tilde{\psi}_y$ for large $R$; this will be shown formally below. Also, $\tilde{\psi}_y^R$ satisfies
\begin{equation}
\label{eqn:int-tildephiy}
\int_{\supp}\tilde{\psi}_y^R(x)\dx=0,
\end{equation}
and furthermore,
\begin{equation}
\label{eqn:tildephiy-ineq}
\tilde{\psi}_y^R(x)\geq -2R\rho(x),
\end{equation}
as $\min(\rho(x)^{1-m},R)\leq R$ and 
\begin{align*}
\left|y\cdot\left(x-c_{\rho \min(\rho^{1-m}, R)}\right)\right| &\leq \|y\| \left( \|x\| + \| c_{\rho \min(\rho^{1-m}, R)}\| \right) \\[2pt]
&\leq 2.
\end{align*}
By \eqref{eqn:int-tildephiy}, \eqref{eqn:tildephiy-ineq}, and the definition of $\calA$ in \eqref{eqn:calA}, we have $\frac{1}{2R\epsilon_0}\tilde{\psi}_y^R\in\mathcal{A}$; this observation will be used later to define the sequence $\psi_{n,y} \in \calA$ needed to show the lemma.

To compare $\tilde{\psi}_y^R$ and $\tilde{\psi}_y$, we start by estimating the difference between $c_{\rho\min(\rho^{1-m},R)}$ and $c_{\rho^{2-m}}$. We compute
\begin{align*}
&c_{\rho\min(\rho^{1-m}, R)}-c_{\rho^{2-m}}\\[3pt]
&\quad = \frac{\int_{\supp}\rho(x)\min(\rho(x)^{1-m},R)x \, \dx}{\int_{\supp}\rho(x)\min(\rho(x)^{1-m},R)\, \dx}-\frac{\int_{\supp}\rho(x)^{2-m}x \, \dx}{\int_{\supp}\rho(x)^{2-m} \, \dx}\\[3pt]
&\quad = \frac{\int_{\supp} \rho(x)\min(\rho(x)^{1-m},R)x\dx \; \int_{\supp} \left(\rho(x)^{2-m}-\rho(x)\min(\rho(x)^{1-m},R) \right)\dx
}{\int_{\supp}\rho(x)\min(\rho(x)^{1-m},R)\dx \, \int_{\supp}\rho(x)^{2-m}\dx}\\[3pt]
&\qquad + \frac{\int_{\supp}\rho(x)\min(\rho(x)^{1-m},R)\dx \,\int_{\supp} \left(\rho(x)\min(\rho(x)^{1-m},R)-\rho(x)^{2-m} \right)x \dx
}{\int_{\supp}\rho(x)\min(\rho(x)^{1-m},R)\dx \, \int_{\supp}\rho(x)^{2-m}\dx}.
\end{align*}
From the triangle inequality, we then get
\begin{align*}
&\left \|c_{\rho\min(\rho^{1-m}, R)}-c_{\rho^{2-m}} \right\|\\[3pt]
&\leq\left\|\frac{\int_{\supp} \rho(x)\min(\rho(x)^{1-m},R)x\dx \, \int_{\supp} \left(\rho(x)^{2-m}-\rho(x)\min(\rho(x)^{1-m},R)\right)\dx
}{\int_{\supp}\rho(x)\min(\rho(x)^{1-m},R)\dx\, \int_{\supp}\rho(x)^{2-m}\dx}\right\|\\[3pt]
&\quad +\left\|\frac{\int\rho(x)\min(\rho(x)^{1-m},R)\dx\int_{\supp}\left(\rho(x)\min(\rho(x)^{1-m},R)-\rho(x)^{2-m} \right) x \dx
}{\int_{\supp}\rho(x)\min(\rho(x)^{1-m},R)\dx\int_{\supp}\rho(x)^{2-m}\dx}\right\|.
\end{align*}

If we use $\|x\|=1$ for all $x\in\bbs^\dm$, we can further estimate the r.h.s. above and find
\begin{align*}
&\left \|c_{\rho\min(\rho^{1-m}, R)}-c_{\rho^{2-m}} \right\|\\[3pt]
&\leq 2 \, \frac{\int_{\supp}\rho(x)\min(\rho(x)^{1-m},R)\dx \, \int_{\supp}|\rho(x)^{2-m}-\rho(x)\min(\rho(x)^{1-m},R)|\dx}{\int_{\supp}\rho(x)\min(\rho(x)^{1-m},R)\dx\int_{\supp}\rho(x)^{2-m}\dx}.
\end{align*}
By the dominated convergence theorem, we have
\begin{equation}
\label{eqn:2limR}
\begin{aligned}
&\lim_{R\to\infty}\int_{\supp}\rho(x)\min(\rho(x)^{1-m},R)\dx =\int_{\supp}\rho(x)^{2-m}\dx,\\
&\lim_{R\to\infty}\int_{\supp}|\rho(x)^{2-m}-\rho(x)\min(\rho(x)^{1-m},R)|\dx=0.
\end{aligned}
\end{equation}
Therefore, we can conclude that
\begin{equation}
\label{eqn:limR-normc}
\lim_{R\to\infty}\|c_{\rho\min(\rho^{1-m}, R)}-c_{\rho^{2-m}}\|=0.
\end{equation}

Now, we compare $\tilde{\psi}_y^R$ and $\tilde{\psi}_y$ directly:
\begin{align*}
&\tilde{\psi}_y(x)-\tilde{\psi}_y^R(x)\\[2pt]
&=\rho(x)^{2-m}y\cdot(x-c_{\rho^{2-m}})-\rho(x)\min(\rho(x)^{1-m}, R)\left(y\cdot\left(x-c_{\rho\min(\rho^{1-m}, R)}\right)\right)\\[2pt]
&=\rho(x)^{2-m}y\cdot(c_{\rho\min(\rho^{1-m},R)}-c_{\rho^{2-m}})-\rho(x)\left(\min(\rho(x)^{1-m}, R)-\rho^{1-m}\right)\left(y\cdot\left(x-c_{\rho\min(\rho^{1-m}, R)}\right)\right).
\end{align*}
Then, we estimate
\begin{align*}
&\|\tilde{\psi}_y-\tilde{\psi}_y^R\|_{1}\\
&\leq\int_{\supp}|\rho(x)^{2-m}y\cdot (c_{\rho\min(\rho^{1-m}, R)}-c_{\rho^{2-m}})|\dx\\
&+\int_{\supp}|(\rho^{2-m}(x)-\rho(x)\min(\rho(x)^{1-m},R))( y\cdot(x-c_{\rho\min(\rho^{1-m}, R)}))|\dx.
\end{align*}
We use again $\|y\|\leq1$ and $\|x-c_{\rho\min(\rho^{1-m},R)}\|\leq2$ to get
\begin{align*}
&\|\tilde{\psi}_y-\tilde{\psi}_y^R\|_{1}\\
&\leq \|c_{\rho\min(\rho^{1-m},R)}-c_{\rho^{2-m}}\|\int_{\supp}\rho(x)^{2-m}\dx\\
&+2\int_{\supp}|\rho^{2-m}(x)-\rho(x)\min(\rho(x)^{1-m},R)|\dx.
\end{align*}
By \eqref{eqn:2limR} and \eqref{eqn:limR-normc}, we finally find
\begin{equation}
\label{eqn:limphi-L1}
\lim_{R\to\infty}\|\tilde{\psi}_y-\tilde{\psi}_y^R\|_{1}=0.
\end{equation}

Next, we calculate the difference between $\mathcal{F}[\tilde{\psi}_y]$ and $\mathcal{F}[\psi_{y}^R]$:
\begin{align*}
&\mathcal{F}[\tilde{\psi}_y]-\mathcal{F}[\tilde{\psi}_y^R]\\[2pt]
&\quad =\frac{\int_{\supp}\rho(x)^{m-2}\tilde{\psi}_y(x)^2\dx}{\|\mu_{\tilde{\psi}_y}\|^2}-\frac{\int_{\supp}\rho(x)^{m-2}\tilde{\psi}_y^R(x)^2\dx}{\|\mu_{\tilde{\psi}_y^R}\|^2}\\
&\quad =\frac{1}{\|\mu_{\tilde{\psi}_y}\|^2}\int_{\supp}\rho(x)^{m-2}\left(\tilde{\psi}_y(x)^2-\tilde{\psi}_y^R(x)^2\right)\dx\\[2pt]
&\qquad +\frac{\|\mu_{\tilde{\psi}_{y}^R}\|^2-\|\mu_{\tilde{\psi}_y}\|^2}{\|\mu_{\tilde{\psi}_y}\|^2\|\mu_{\tilde{\psi}_{y}^R}\|^2}\int_{\supp}\rho(x)^{m-2}\tilde{\psi}_y^R(x)^2\dx.
\end{align*}
We further estimate
\begin{equation}
\label{eqn:F-diff}
\begin{aligned}
\left|\mathcal{F}[\tilde{\psi}_y]-\mathcal{F}[\tilde{\psi}_y^R]\right|&\leq
\frac{1}{\|\mu_{\tilde{\psi}_y}\|^2}\left|
\int_{\supp}\rho(x)^{m-2}(\tilde{\psi}_y(x)^2-\tilde{\psi}_y^R(x)^2)\dx\right|
\\
&+\frac{\left|\|\mu_{\tilde{\psi}_{y}^R}\|^2-\|\mu_{\tilde{\psi}_y}\|^2\right|}{\|\mu_{\tilde{\psi}_y}\|^2\|\mu_{\tilde{\psi}_{y}^R}\|^2}
\int_{\supp}\rho(x)^{m-2}\tilde{\psi}_y^R(x)^2\dx.
\end{aligned}
\end{equation}
The second term in the r.h.s. of \eqref{eqn:F-diff} tends to zero as $R\to\infty$ since
\[
\lim_{R\to\infty}\|\mu_{\tilde{\psi}_y^R}-\mu_{\tilde{\psi}_y}\|=\lim_{R\to\infty}\left\|\int_{\supp}(\tilde{\psi}_y^R(x)-\tilde{\psi}_y(x))x\dx\right\|\leq \lim_{R\to\infty} \|\tilde{\psi}_y^R-\tilde{\psi}_y\|_1=0,
\]
by using \eqref{eqn:limphi-L1}. We will also show that as $R \to \infty$, the first term in the r.h.s. of \eqref{eqn:F-diff} tends to zero as well. 

Indeed, as $x, y\in\bbs^\dm$ and $\|c_{\rho^{2-m}}\|,\|c_{\rho\min(\rho^{1-m},R)}\|\leq1$, we have
\[
|\tilde{\psi}_y(x)|,|\tilde{\psi}_y^R(x)|\leq 2\rho(x)^{2-m}, \qquad\forall x\in\supp,\quad R>0,
\]
from which we can infer
\begin{align*}
\rho(x)^{m-2}\left|\tilde{\psi}_y(x)^2-\tilde{\psi}_y^R(x)^2\right|&=\rho(x)^{m-2}\left|\tilde{\psi}_y(x)+\tilde{\psi}_y^R(x)\right|\times\left|\tilde{\psi}_y(x)-\tilde{\psi}_y^R(x)\right|\\
&\leq4\left|\tilde{\psi}_y(x)-\tilde{\psi}_y^R(x)\right|.
\end{align*}
Hence, we get
\begin{align*}
\left|
\int_{\supp}\rho(x)^{m-2}(\tilde{\psi}_y(x)^2-\tilde{\psi}_y^R(x)^2)\dx\right|& \leq 4\int_{\supp}|\tilde{\psi}_y(x)-\tilde{\psi}_y^R(x)|\dx \\
&=4\|\tilde{\psi}_y-\tilde{\psi}_y^R\|_1,
\end{align*}
and by \eqref{eqn:limphi-L1} we conclude that the first term in the r.h.s. of \eqref{eqn:F-diff} tends to zero as $R\to\infty$. Combining the results, we find from \eqref{eqn:F-diff} that
\[
\lim_{R\to\infty}\left|\mathcal{F}[\tilde{\psi}_y]-\mathcal{F}[\tilde{\psi}_y^R]\right|=0,
\]
and hence,
\begin{equation}
\label{eqn:limF-Rinf}
\lim_{R\to\infty}\mathcal{F}[\tilde{\psi}_y^R]=\mathcal{F}[\tilde{\psi}_y].
\end{equation}

Now, we choose a sequence 
\[
R_n\nearrow\infty,\quad\text{ as }\quad n\nearrow\infty.
\]

Since $\tilde{\psi}_y^{R_n}$ satisfies (see \eqref{eqn:tildephiy-ineq})
\[
\tilde{\psi}_y^{R_n}(x)\geq-2R_n\rho(x),\qquad\forall x\in\bbs^\dm,
\]
we define
\[
\psi_{n, y}=\frac{1}{2R_n\epsilon_0}\tilde{\psi}_y^{R_n},
\]
to ensure
\[
\psi_{n, y}(x)\geq -\frac{1}{\epsilon_0}\rho(x), \qquad\forall x\in\bbs^\dm.
\]
The functions $\psi_{n,y}$ also integrate to $0$ (see \eqref{eqn:int-tildephiy}), and therefore $\psi_{n, y}\in\mathcal{A}$ for all $n \geq 1$. Also, since the functional $\calF$ is homogeneous, we have 
\[
\mathcal{F}[\psi_{n,y}]=\mathcal{F}[\tilde{\psi}_y^{R_n}].
\]

Finally, using the above, along with \eqref{eqn:Ftildephi=Fphi} and \eqref{eqn:limF-Rinf}, we conclude that $\{\psi_{n, y}\}_{n\geq1}\subset\mathcal{A}$ satisfies 
\[
\lim_{n\to\infty}\mathcal{F}[\psi_{n, y}]= \lim_{n\to\infty}\mathcal{F}[\tilde{\psi}_y^{R_n}] = \mathcal{F}[\tilde{\psi}_y]=\mathcal{F}[\psi_y].
\]
\end{proof}

\begin{remark}
If $\rho$ is fully supported, then $\rho^{2-m}\in L^\infty(\bbs^\dm)$ and hence $\psi_y\in L^\infty(\bbs^\dm)$. Therefore, the proof of Lemma \ref{lem:approx} for this case can be directly obtained by the scaling argument.
\end{remark}

We now have all the preparatory results to characterize the stability of the energy equilibria.
\begin{theorem}[Criteria for stability]
\label{thm:minF}
Consider an energy equilibrium $\rho$ in the form \eqref{eqn:equil}, with normalized centre of mass at $x_0$. Let $x_1 \in \bbs^\dm$ be a fixed vector orthogonal to $x_0$, as in Proposition \ref{prop:minF-tildeA}, and the functional $\calF$ defined by \eqref{eqn:calF}. Then,
\smallskip

(a) If $\min(\mathcal{F}[\psi_{x_0}],\mathcal{F}[\psi_{x_1}])<\frac{\kappa}{m}$, the equilibrium $\rho$ is unstable. \smallskip

(b) If $\min(\mathcal{F}[\psi_{x_0}],\mathcal{F}[\psi_{x_1}])>\frac{\kappa}{m}$, the equilibrium $\rho$ is stable. \smallskip

(c) If $\min(\mathcal{F}[\psi_{x_0}],\mathcal{F}[\psi_{x_1}])=\frac{\kappa}{m}$ and $\rho$ is strictly supported, then $\rho$ is stable.
\end{theorem}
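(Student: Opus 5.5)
The plan is to relate the second variation \eqref{eqn:d2Edeps-0} to the functional $\calF$, and then to use Proposition \ref{prop:minF-tildeA} together with Lemma \ref{lem:approx}. For $\psi\in\calA$ with $\mu_\psi\neq 0$, the identity
\[
\left.\frac{\d^2}{\d\epsilon^2}E[\rho^\epsilon]\right|_{\epsilon=0+}=\|\mu_\psi\|^2\bigl(m\,\calF[\psi]-\kappa\bigr)
\]
holds. If $\mu_\psi=0$ and $\psi\not\equiv 0$, the second variation equals $m\int_{\supp}\rho^{m-2}\psi^2\dx>0$. By Proposition \ref{prop:fo-stab}, perturbations whose support is not contained in $\supp$ already have strictly positive first variation. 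So only $\psi\in\calA$ remain, and for these the first variation vanishes. Hence the sign of $m\calF[\psi]-\kappa$ decides stability, and I would use the expansion $E[\rho+\epsilon\psi]=E[\rho]+\frac{\epsilon^2}{2}\frac{\d^2}{\d\epsilon^2}E[\rho^\epsilon]|_{0}+o(\epsilon^2)$. This expansion is justified because the admissible lower bound $\psi\ge -\rho/\epsilon_0$ keeps $\rho+\epsilon\psi$ comparable to $\rho$ on $\supp$ for $\epsilon$ small, so $\int(\rho+\epsilon\psi)^m$ can be Taylor expanded.

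\emph{Part (a).} Let $\min(\calF[\psi_{x_0}],\calF[\psi_{x_1}])=\calF[\psi_y]<\kappa/m$, with $y\in\{x_0,x_1\}$. By Lemma \ref{lem:approx} there are $\psi_{n,y}\in\calA$ with $\calF[\psi_{n,y}]\to\calF[\psi_y]$. Hence for $n$ large we have $\calF[\psi_{n,y}]<\kappa/m$, and also $\mu_{\psi_{n,y}}\neq0$, since otherwise $\calF$ would be infinite. The second variation in the direction $\psi_{n,y}$ is then strictly negative while the first variation is zero, so $E$ strictly decreases along $\rho+\epsilon\psi_{n,y}$ and $\rho$ is unstable. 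Here the truncated functions $\tilde\psi^{R}_y$ are also bounded above by a multiple of $\rho$, which makes the Taylor expansion clean.

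\emph{Part (b).} Since $\calA\subset\tilde\calA$, Proposition \ref{prop:minF-tildeA} gives $\calF[\psi]\ge\min(\calF[\psi_{x_0}],\calF[\psi_{x_1}])>\kappa/m$ for every $\psi\in\calA$ with $\mu_\psi\ne0$. Therefore the second variation is strictly positive for every nonzero $\psi\in\calA$. Combined with Proposition \ref{prop:fo-stab} for the remaining directions, this gives stability.

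\emph{Part (c).} This is the main obstacle, since we must show the infimum $\kappa/m$ is \emph{not attained} on $\calA$, so that the second variation is still strictly positive. Suppose $\psi\in\calA\subset\tilde\calA$, $\mu_\psi\ne0$, and $\calF[\psi]=\kappa/m$. Then $\psi$ minimizes $\calF$ over $\tilde\calA$. Because $\tilde\calA$ is a linear space and $\calF$ is $0$-homogeneous, $\psi$ is a critical point, so by \eqref{eqn:phi-final} it has the form $\psi=\alpha\psi_y$ with $\alpha\ne0$ and $y=\mu_\psi/\|\mu_\psi\|$. I would then show that $\psi_y$ is unbounded below near $\partial\,\supp=\{\theta_x=\phi\}$, which contradicts $\psi\ge-\rho/\epsilon_0$. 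Indeed $\rho^{2-m}\to\infty$ there because $m>2$, so it suffices that $y\cdot(x-c_{\rho^{2-m}})$ takes nonzero values of both signs, or at least one nonzero value, on the boundary sphere. If $y$ has a component orthogonal to $x_0$, then $y\cdot x$ is nonconstant on that $(\dm-1)$-sphere. So $y\cdot(x-c_{\rho^{2-m}})$ is nonzero at some boundary point, and, up to the sign of $\alpha$, it has the wrong sign on one side. If $y=\pm x_0$, then $x_0\cdot(x-c_{\rho^{2-m}})=\cos\phi-\|c_{\rho^{2-m}}\|$ on the boundary. Since $\|c_{\rho^{2-m}}\|=B(\rho)/A(\rho)$ is a positive-weight average of $\cos\theta$ over $\theta<\phi$, this quantity is strictly negative. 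In every case $\alpha\psi_y\notin\calA$, which is a contradiction. Hence $\calF[\psi]>\kappa/m$ on $\calA$ and the second variation is strictly positive. This argument uses strict support in an essential way; for fully supported $\rho$ the critical point $\psi_y$ lies in $\calA$ and equality can be attained.
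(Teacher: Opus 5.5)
\textbf{Gap in part (c).} Parts (a) and (b) follow the paper's argument (Lemma~\ref{lem:approx} together with Proposition~\ref{prop:minF-tildeA}), and your separate treatment of $\mu_\psi=0$ is a harmless addition. Part (c) fails where you claim that every critical point $\alpha\psi_y$ is excluded from $\calA$.

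Two facts break this step. First, the constraint defining $\calA$ is one-sided, $\psi\ge-\rho/\epsilon_0$, so blow-up to $+\infty$ at $\partial\,\supp$ is admissible. Second, by \eqref{eqn:phi-final} the scalar is $\alpha=\|\mu_\psi\|>0$, so there is no ``up to the sign of $\alpha$''.

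\emph{The case $y=-x_0$.} The function $\psi_{-x_0}=-\psi_{x_0}$ is a consistent critical point, since $\mu_{\psi_{-x_0}}=-x_0$. By your own computation, $x_0\cdot(x-c_{\rho^{2-m}})=\cos\phi-\|c_{\rho^{2-m}}\|<0$ on the boundary, so $-\psi_{x_0}\to+\infty$ there. It is negative only on $\{\cos\theta_x>\|c_{\rho^{2-m}}\|\}$. On that set $\rho$ is bounded away from zero and $\psi_{x_0}$ is bounded. Hence $-t\psi_{x_0}\in\calA$ for small $t>0$, and $\calF[-t\psi_{x_0}]=\calF[\psi_{x_0}]$.

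\emph{Oblique $y$.} Write $y=\cos\beta\,x_0+\sin\beta\,x_1$ and boundary points as $x=\cos\phi\,x_0+\sin\phi\,u$, with $u\perp x_0$ a unit vector. Then $y\cdot(x-c_{\rho^{2-m}})=\cos\beta(\cos\phi-\|c_{\rho^{2-m}}\|)+\sin\beta\sin\phi\,(u\cdot x_1)$. This is positive on the whole boundary sphere when $\cos\beta<0$ and $|\cos\beta|(\|c_{\rho^{2-m}}\|-\cos\phi)>|\sin\beta|\sin\phi$. So ``nonzero at some boundary point'' does not give unboundedness from below.

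Consequently, whenever $\calF[\psi_{x_0}]=\kappa/m$, the infimum is attained in $\calA$, and the second variation vanishes in an admissible direction.

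\textbf{What is missing.} You need the fact that $\calF[\psi_{x_1}]=\kappa/m$ for every equilibrium. This follows from Proposition~\ref{prop:calF-x0x1-ABC} and the third identity in \eqref{ABC-system}. Under the hypothesis of (c), two cases remain.

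If $\calF[\psi_{x_0}]>\kappa/m$, set $X=\calF[\psi_{x_0}]^{-1}<Y=\calF[\psi_{x_1}]^{-1}$. The weighted average in Lemma~\ref{lem:w-average} equals $Y$ only when the weight on $X$ vanishes, i.e.\ $y\perp x_0$. So every minimizer over $\tilde\calA$ is a positive multiple of some $\psi_y$ with $y\perp x_0$. For such $y$, $y\cdot(x-c_{\rho^{2-m}})=\sin\phi\,(y\cdot u)$ takes both signs on the boundary sphere, and your contradiction goes through.

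If instead $\calF[\psi_{x_0}]=\kappa/m$, then by \eqref{eqn:signdiff-sgnFp} this is exactly $\phi=\bar\phi$, i.e.\ $\kappa=\kappa_3$. There the direction $-t\psi_{x_0}$ shows that no second-order argument can give (c).

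The paper's own proof of (c) only observes that $\psi_{x_0},\psi_{x_1}\notin\calA$, so it has the same blind spot. It is valid in the first case, which is the only one used in Theorem~\ref{thm:mg2:cs-stab}.

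A minor point: $\psi\ge-\rho/\epsilon_0$ does not make $\rho+\epsilon\psi$ comparable to $\rho$ from above. What (b) and (c) actually need is the one-sided bound $(\rho+\epsilon\psi)^{m-2}\ge(1-\epsilon/\epsilon_0)^{m-2}\rho^{m-2}$, which holds for $m\ge2$.
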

\begin{proof}
We investigate the sign of the second variation of the energy given by \eqref{eqn:d2Edeps-0}. By \eqref{eqn:calF}, this reduces to investigating the sign of $\mathcal{F}[\psi]-\frac{\kappa}{m}$. If $\mathcal{F}[\psi]-\frac{\kappa}{m}<0$ for some $\psi \in\mathcal{A}$ then $\rho$ is unstable, and if $\mathcal{F}[\psi]-\frac{\kappa}{m}>0$ for any $\psi \in\mathcal{A}$ then $\rho$ is stable. \\

\noindent(a) In this case, by Proposition \ref{prop:minF-tildeA} we have $\inf_{\psi \in\mathcal{A}}\mathcal{F}[\psi]=\min(\mathcal{F}[\psi_{x_0}],\mathcal{F}[\psi_{x_1}])<\frac{\kappa}{m}$. From Lemma \ref{lem:approx}, for $\epsilon=\frac{\kappa}{m}-\min(\mathcal{F}[\psi_{x_0}],\mathcal{F}[\psi_{x_1}])>0$, there exists $\psi \in\mathcal{A}$ such that
\[
\left|\mathcal{F}[\psi]-\min(\mathcal{F}[\psi_{x_0}],\mathcal{F}[\psi_{x_1}])\right|<\epsilon.
\]
Therefore, we get $\mathcal{F}[\psi]<\frac{\kappa}{m}$ and we can directly conclude the instability of $\rho$.\\

\noindent(b) This case is trivial, as by Proposition \ref{prop:minF-tildeA} we have $\mathcal{F}[\psi]\geq\min(\mathcal{F}[\psi_{x_0}],\mathcal{F}[\psi_{x_1}])>\frac{\kappa}{m}$ for any $\psi \in\mathcal{A}$. \\

\noindent(c) If $\rho$ is strictly supported, then $\psi_{x_0}, \psi_{x_1}\in\tilde{\mathcal{A}}\backslash\mathcal{A}$. Therefore, $\inf_{\psi \in\mathcal{A}}\mathcal{F}[\psi]=\min(\mathcal{F}[\psi_{x_0}],\mathcal{F}[\psi_{x_1}])=\frac{\kappa}{m}$, but the minimum of $\mathcal{F}$ is not achieved in $\mathcal{A}$. We can conclude then that $\mathcal{F}[\psi]>\frac{\kappa}{m}$ for any $\psi \in\mathcal{A}$, which implies that $\rho$ is stable. 
\end{proof}

\section{Stability of the equilibria ($m>2$)}
\label{sect:2ndorder-fscs}
In this section, we use Theorem \ref{thm:minF} to establish the stability/instability of the equilibria \eqref{eqn:equil}. 

\subsection{Fully supported equilibria}
\label{subsect:mg2:fs}
We consider first the fully supported equilibria -- equilibria of type a) in Section \ref{subsect:cp}, see \eqref{eqn:equil-fs}. Recall notation \eqref{eqn:eta-notation} for $\eta$. As for fully supported equilibria we have $\lambda \geq \kappa \|c_\rho\|$, $\eta$ ranges in $\eta \leq -1$. Written in terms of $\eta$, \eqref{eqn:equil-fs} reads
\begin{equation}
\label{eqn:equil-fs-eta}
\rho(x)=\left(\frac{(m-1)\kappa \|c_\rho\|}{m}\right)^{\frac{1}{m-1}}\left(\cos\theta_x-\eta\right)^{\frac{1}{m-1}},\qquad\forall x\in \bbs^\dm,
\end{equation}
where by \eqref{eqn:system-fs-mg1}, $\| c_\rho\|$ and $\eta$ satisfy
\begin{subequations}
\label{eqn:1s-comp}
\begin{align}
1&=\dm w_\dm\left(\frac{(m-1) \kappa \|c_\rho\|}{m}\right)^{\frac{1}{m-1}}\int_0^\pi (\cos\theta-\eta)^{\frac{1}{m-1}}\sin^{\dm-1}\theta\d\theta, \label{eqn:1-comp}\\
\|c_\rho\|&=\dm w_\dm\left(\frac{(m-1) \kappa \|c_\rho\|}{m}\right)^{\frac{1}{m-1}}\int_0^\pi (\cos\theta-\eta)^{\frac{1}{m-1}}\sin^{\dm-1}\theta\cos\theta\d\theta. \label{eqn:s-comp}
\end{align}
\end{subequations}

In Theorem \ref{thm:minF}, we classified the stability of equilibria in terms of the values of $\mathcal{F}[\psi_{x_0}]$ and $\mathcal{F}[\psi_{x_1}]$. To calculate these values we will use the following relationships between $A(\rho)$, $B(\rho)$ and $C(\rho)$ defined in \eqref{eqn:ABC}.
\begin{lemma}
\label{lemma:ABC-fs}
Let $\rho$ be a fully supported equilibrium in the form \eqref{eqn:equil-fs-eta}. Then there exist the following relationships between $A(\rho)$, $B(\rho)$ and $C(\rho)$ defined in \eqref{eqn:ABC}:
\begin{align}
\label{ABC-system-fs}
\begin{cases}
\displaystyle B(\rho) = \frac{m}{(m-1)\kappa \|c_\rho\|} + \eta A(\rho),\vspace{0.3cm}\\
\displaystyle C(\rho)=\frac{m}{(m-1)\kappa}+\eta B(\rho),\vspace{0.3cm}\\
\displaystyle A(\rho)-C(\rho)=\frac{\dm m}{\kappa}.
\end{cases}
\end{align}
Also, we can solve the above system of linear equations to get
\begin{equation}
\label{eqn:A-fs-calc2}
A(\rho)=\frac{1}{\eta^2-1}\frac{m}{\kappa}\left(-\frac{1}{m-1}-\frac{\eta}{(m-1)\|c_\rho\|}-\dm\right).
\end{equation}
\end{lemma}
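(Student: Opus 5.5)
In hyperspherical coordinates about $x_0$ we have $x_0\cdot x=\cos\theta$, and by \eqref{eqn:equil-fs-eta}
\[
\rho^{2-m}=K^{\frac{2-m}{m-1}}(\cos\theta-\eta)^{\frac{2-m}{m-1}}, \qquad K:=\frac{(m-1)\kappa\|c_\rho\|}{m}.
\]
The plan is to prove the three relations in \eqref{ABC-system-fs} by rewriting the normalization conditions \eqref{eqn:1s-comp}, and then to solve the resulting linear system.

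\textbf{The first two relations.} Write
\[
\rho=K^{\frac{1}{m-1}}(\cos\theta-\eta)^{\frac{1}{m-1}}=K\,\rho^{2-m}(\cos\theta-\eta).
\]
Integrating this over $\bbs^\dm$, the mass condition \eqref{eqn:1-comp} becomes $1=K(B-\eta A)$, which is the first relation. Multiplying by $\cos\theta$ before integrating, the centre-of-mass condition \eqref{eqn:s-comp} becomes $\|c_\rho\|=K(C-\eta B)$. Dividing by $K\|c_\rho\|$ gives $C=\frac{m}{(m-1)\kappa}+\eta B$, the second relation.

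\textbf{The third relation.} This needs an integration by parts in $\theta$; it is the only nontrivial step and where care with constants is needed. Set $p=\frac{1}{m-1}$. We have
\[
\frac{d}{d\theta}(\cos\theta-\eta)^{p}=-p\sin\theta(\cos\theta-\eta)^{p-1}.
\]
Hence
\[
\int_0^\pi(\cos\theta-\eta)^{p-1}\sin^{\dm+1}\theta\,\d\theta=-\frac1p\int_0^\pi\sin^\dm\theta\,\d(\cos\theta-\eta)^p .
\]
Integrating by parts, the boundary terms vanish because $\sin\theta=0$ at $0$ and $\pi$. The factor $(\cos\theta-\eta)^p$ is bounded, including at $\eta=-1$, so no singularity arises. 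The result is
\[
\frac{\dm}{p}\int_0^\pi(\cos\theta-\eta)^p\sin^{\dm-1}\theta\cos\theta\,\d\theta .
\]
Multiply through by $\dm w_\dm K^{p-1}$. The left side becomes $\int\rho^{2-m}(1-\cos^2\theta)=A-C$. The right side becomes $\frac{\dm}{pK}\|c_\rho\|$, using \eqref{eqn:s-comp}, and this equals $\dm(m-1)\frac{m}{(m-1)\kappa}=\frac{\dm m}{\kappa}$.

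\textbf{Solving for $A$.} Substitute the first relation into the second to get
\[
C=\frac{m}{(m-1)\kappa}+\frac{\eta m}{(m-1)\kappa\|c_\rho\|}+\eta^2A .
\]
Inserting this into the third relation gives
\[
(1-\eta^2)A=\frac{\dm m}{\kappa}+\frac{m}{(m-1)\kappa}+\frac{\eta m}{(m-1)\kappa\|c_\rho\|},
\]
which rearranges to \eqref{eqn:A-fs-calc2} whenever $\eta<-1$. The borderline case $\eta=-1$ would need separate treatment, since the formula degenerates there. These manipulations mirror Lemma~\ref{lemma:ABC-cs}, with $\cos\phi$ replaced by $\eta$ and the upper limit $\phi$ replaced by $\pi$.
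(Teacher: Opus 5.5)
Your proof is correct and is essentially the paper's argument in Appendix~\ref{appendix:fs}. The paper gets the first two relations from the mass and centre-of-mass conditions \eqref{eqn:1-comp}--\eqref{eqn:s-comp} by splitting $\cos\theta=(\cos\theta-\eta)+\eta$, which is the same as your identity $\rho=K\rho^{2-m}(\cos\theta-\eta)$; it then obtains the third relation by the same integration by parts of $(\cos\theta-\eta)^{\frac{2-m}{m-1}}\sin^{\dm+1}\theta$, and $A(\rho)$ by the same elimination. Your caveat that \eqref{eqn:A-fs-calc2} degenerates at $\eta=-1$ is a fair point the paper leaves implicit; there the system only forces $\|c_\rho\|=1/(1+\dm(m-1))$. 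It is harmless, because the fully supported equilibria used later all have $\eta<-1$.
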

\begin{proof}
See Appendix \ref{appendix:fs}.
\end{proof}

The value of $\mathcal{F}[\psi_{x_1}]$ follows directly from \eqref{eqn:calF-x0x1-ABC} and \eqref{ABC-system-fs} (third equation), which yield
\begin{equation}
\label{eqn:calFi-x1-fs}
\mathcal{F}[\psi_{x_1}]^{-1} =\frac{m}{\kappa},
\quad\text{or equivalently}\quad
\mathcal{F}[\psi_{x_1}] =\frac{\kappa}{m}.
\end{equation}
Therefore, to compare $\min(\mathcal{F}[\psi_{x_0}],\mathcal{F}[\psi_{x_1}])$ with $\frac{\kappa}{m}$ (cf., Theorem \ref{thm:minF}), it suffices to determine the sign of
\[
\mathcal{F}[\psi_{x_0}]-\frac{\kappa}{m}.
\]
Equivalently, since $\mathcal{F}[\psi_{x_0}]>0$ and $\kappa/m>0$, we may determine the sign of
\[
\mathcal{F}[\psi_{x_0}]^{-1}-\frac{m}{\kappa},
\]
which has the opposite sign. 

Now use \eqref{eqn:calF-x0x1-ABC}, \eqref{ABC-system-fs} (first and third equation) and \eqref{eqn:A-fs-calc2} to compute $\mathcal{F}[\psi_{x_0}]$. We find
\begin{align*}
\mathcal{F}[\psi_{x_0}]^{-1}
&=A(\rho)-\frac{m\dm}{\kappa}-\frac{\left(\eta A(\rho)+\frac{m}{(m-1)\kappa \|c_\rho\|}\right)^2}{A(\rho)}\\
&=A(\rho)-\frac{m\dm}{\kappa}-\eta^2 A(\rho)-\frac{2\eta m }{(m-1)\kappa \|c_\rho\|}-\frac{m^2}{(m-1)^2\kappa^2 \|c_\rho\|^2 A(\rho)}\\
&=(1-\eta^2)A(\rho)-\frac{m\dm}{\kappa}-\frac{2\eta m }{(m-1)\kappa \|c_\rho\|}-\frac{m^2}{(m-1)^2\kappa^2 \|c_\rho\|^2 A(\rho)}\\
&=-\frac{m}{\kappa}\left(-\frac{1}{m-1}-\frac{\eta}{(m-1)\|c_\rho\|}-\dm\right)-\frac{m\dm}{\kappa}-\frac{2\eta m }{(m-1)\kappa \|c_\rho\|}-\frac{m^2}{(m-1)^2\kappa^2 \|c_\rho\|^2 A(\rho)}\\
&=\frac{m}{\kappa(m-1)}-\frac{\eta m}{(m-1)\kappa \|c_\rho\|}-\frac{m^2}{(m-1)^2\kappa^2 \|c_\rho\|^2 A(\rho)}.
\end{align*}
Then, using \eqref{eqn:A-fs-calc2} again, we compute 
\begin{align*}
\mathcal{F}[\psi_{x_0}]^{-1}-\frac{m}{\kappa}&=\frac{m(2-m)}{\kappa(m-1)}-\frac{\eta m}{(m-1)\kappa \|c_\rho\|}-\frac{m^2}{(m-1)^2\kappa^2 \|c_\rho\|^2 A(\rho)}\\
&=\frac{m}{(m-1)\kappa}\left(2-m-\frac{\eta}{\|c_\rho\|}-\frac{m}{(m-1)\kappa \|c_\rho\|^2 A(\rho)}\right)\\
&=\frac{m}{(m-1)\kappa}\left(2-m-\frac{\eta}{\|c_\rho\|}-\frac{\eta^2-1}{ \|c_\rho\|^2 \left( -1-\frac{\eta}{\|c_\rho\|}-\dm(m-1)\right)}\right)\\
&=-\frac{m}{(m-1)\kappa \|c_\rho\|}\left( \|c_\rho\|(m-2)+\eta+\frac{\eta^2-1}{ -\|c_\rho\| -\eta-\|c_\rho\|\dm(m-1)}\right)\\
&=-\frac{m}{(m-1)\kappa \|c_\rho\|(-\|c_\rho\|-\eta- \|c_\rho\|\dm(m-1))}\\[5pt]
&\hspace{2cm}\times\left((\|c_\rho\|(m-2)+\eta)(-\|c_\rho\|-\eta-\|c_\rho\|\dm(m-1))+\eta^2-1\right).
\end{align*}

Therefore,
\begin{align*}
&\mathrm{sgn}\left(\mathcal{F}[\psi_{x_0}]^{-1}-\frac{m}{\kappa}\right) = \\
&\quad \mathrm{sgn}\left(-\frac{1}{-\|c_\rho\|-\eta- \|c_\rho\|\dm(m-1)}\left((\|c_\rho\|(m-2)+\eta)(-\|c_\rho\|-\eta-\|c_\rho\|\dm(m-1))+\eta^2-1\right)\right).
\end{align*}
As $A(\rho)>0$, by $\eqref{eqn:A-fs-calc2}$ we infer that $-\|c_\rho\|-\eta-\|c_\rho\|\dm(m-1)>0$. Hence,
\begin{equation}
\label{eqn:sign-diff}
\mathrm{sgn}\left(\mathcal{F}[\psi_{x_0}]^{-1}-\frac{m}{\kappa}\right)=-\mathrm{sgn}\left((\|c_\rho\|(m-2)+\eta)(-\|c_\rho\|-\eta-\|c_\rho\|\dm(m-1))+\eta^2-1\right).
\end{equation}
It remains to investigate the sign of the right-hand-side of \eqref{eqn:sign-diff}.

By Lemma \ref{lem:H-monotone}, the function $H(\eta)$ given by \eqref{eqn:H} is increasing when $m>2$.  Also, we can show that $H(\eta)>0$, by the following argument. The constants outside the integrals are positive, and the second integral is clearly positive. Therefore, it suffices to show that the first integral is positive. To do this, we split the integral into two parts and make the change of variable $\theta\to\pi-\theta$ in the second one, to get
\begin{align*}
&\int_0^\pi (\cos\theta-\eta)^{\frac{1}{m-1}}\sin^{\dm-1}\theta \cos\theta\d\theta\\
=&\int_0^{\frac{\pi}{2}} (\cos\theta-\eta)^{\frac{1}{m-1}}\sin^{\dm-1}\theta \cos\theta\d\theta
+\int_{\frac{\pi}{2}}^\pi (\cos\theta-\eta)^{\frac{1}{m-1}}\sin^{\dm-1}\theta \cos\theta\d\theta\\
=&\int_0^{\frac{\pi}{2}} (\cos\theta-\eta)^{\frac{1}{m-1}}\sin^{\dm-1}\theta \cos\theta\d\theta
-\int_{0}^{\frac{\pi}{2}} (-\cos\theta-\eta)^{\frac{1}{m-1}}\sin^{\dm-1}\theta \cos\theta\d\theta\\
=&\int_0^{\frac{\pi}{2}}\left((\cos\theta-\eta)^{\frac{1}{m-1}}-(-\cos\theta-\eta)^{\frac{1}{m-1}}\right)\sin^{\dm-1}\theta\cos\theta\d\theta.
\end{align*}
Since $\frac{1}{m-1}>0$ and
\[
\cos\theta-\eta\ge -\cos\theta-\eta,
\qquad
0\le\theta\le\frac{\pi}{2},
\]
the integrand is nonnegative. Moreover, it is strictly positive except at $\theta=\frac{\pi}{2}$. Therefore, the first integral is strictly positive, and consequently $H(\eta)>0$.

These facts imply that
\begin{align*}
0&<\frac{\d}{\d\eta}\ln H(\eta) \\
& =-\frac{1}{m-1}\frac{\int_0^\pi (\cos\theta-\eta)^{\frac{1}{m-1}-1}\sin^{\dm-1}\theta \cos\theta\d\theta}{\int_0^\pi (\cos\theta-\eta)^{\frac{1}{m-1}}\sin^{\dm-1}\theta \cos\theta\d\theta}-\frac{m-2}{m-1}\frac{\int_0^\pi (\cos\theta-\eta)^{\frac{1}{m-1}-1}\sin^{\dm-1}\theta \d\theta}{\int_0^\pi (\cos\theta-\eta)^{\frac{1}{m-1}}\sin^{\dm-1}\theta \d\theta} \\[2pt]
& = -\frac{1}{m-1} \left( \frac{B(\rho)}{C(\rho)-B(\rho)\eta}+(m-2)\frac{A(\rho)}{B(\rho)-A(\rho)\eta}\right),
\end{align*}
where for the second equal sign we used \eqref{eqn:A-fs-calc1}, \eqref{eqn:B-fs-calc1} (first line) and \eqref{eqn:C-fs-calc1} (first line), and for the integrals in the denominators we wrote 
\begin{align*}
(\cos \theta - \eta)^{\frac{1}{m-1}} &= (\cos \theta - \eta)^{\frac{2-m}{m-1}} (\cos \theta -\eta) \\[2pt]
&= (\cos \theta - \eta)^{\frac{2-m}{m-1}} \cos \theta - (\cos \theta - \eta)^{\frac{2-m}{m-1}} \eta.
\end{align*}

We infer that
\[
\frac{B(\rho)}{C(\rho)-B(\rho)\eta}+(m-2)\frac{A(\rho)}{B(\rho)-A(\rho)\eta}<0,
\]
which by \eqref{ABC-system-fs} (first and second equations) can be simplified into
\[
\frac{(m-1)\kappa B(\rho)}{m}+(m-2)\frac{(m-1)\kappa \|c_\rho\| A(\rho)}{m}<0.
\]
From the above we infer
\[
(m-2)\|c_\rho\| A(\rho)+B(\rho)<0,
\]
and by using the first equation in \eqref{ABC-system-fs} again, we get
\[
A(\rho)(\eta+(m-2)\|c_\rho\|)+\frac{m}{(m-1)\kappa \|c_\rho\|}<0.
\]

Finally, use \eqref{eqn:A-fs-calc2} to find
\[
\frac{1}{\eta^2-1}\frac{m}{\kappa}\left(-\frac{1}{m-1}-\frac{\eta}{(m-1)\|c_\rho\|}-\dm\right)(\eta+(m-2)\|c_\rho\|)+\frac{m}{(m-1)\kappa \|c_\rho\|}<0,
\]
which in turn can be simplified into
\[
(\eta+(m-2)\|c_\rho\|)\left(-\|c_\rho\|-\eta-\dm(m-1)\|c_\rho\|\right)+\eta^2-1<0.
\]
We conclude that
\[
\mathrm{sgn}\left((\eta+(m-2) \|c_\rho\|)(-\|c_\rho\|-\eta-\|c_\rho\|\dm(m-1))+\eta^2-1\right)=-1,
\]
and back to \eqref{eqn:sign-diff}, we find
\[
\mathrm{sgn}\left(\mathcal{F}[\psi_{x_0}]^{-1}-\frac{m}{\kappa}\right)=+1.
\]
Hence, 
\[
\mathcal{F}[\psi_{x_0}]^{-1}>\frac{m}{\kappa},\quad\text{or equivalently}\quad \mathcal{F}[\psi_{x_0}]<\frac{\kappa}{m}.
\]
Finally, we conclude that
\begin{equation}
\label{eqn:minlkm}
\min(\mathcal{F}[\psi_{x_0}],\mathcal{F}[\psi_{x_1}])<\frac{\kappa}{m}.
\end{equation}

The considerations above can be applied to the fully supported equilibrium $\rho_{\kappa,2}$ from Proposition \ref{prop:bif-mg2} to infer the following result.

\begin{theorem}[Stability of fully supported equilibria]
\label{thm:mg2:fs-stab}
The fully supported equilibrium $\rho_{\kappa,2}$ from Theorem \ref{thm:equigenerald} (see Proposition \ref{prop:bif-mg2}, equation \eqref{eqn:rhok-fs-mg2} and also the red dashed line in Figure \ref{fig:m35-splot}(b)) is unstable; note that this equilibrium exists for $\kappa_2<\kappa<\kappa_1$. 
\end{theorem}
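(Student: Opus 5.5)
The plan is to apply Theorem \ref{thm:minF}(a) to $\rho=\rho_{\kappa,2}$ for $\kappa_2<\kappa<\kappa_1$. By Theorem \ref{thm:equigenerald} (equivalently Proposition \ref{prop:bif-mg2} in general dimension), this equilibrium has the form \eqref{eqn:equil-fs-eta}. Here $\eta=\eta_\kappa$ is the unique solution of \eqref{eqn:kappa-eta}, and $\|c_\rho\|>0$ is given by \eqref{eqn:s-eta}. Moreover $\eta<-1$ strictly, because $H$ is strictly increasing for $m>2$ and $\kappa>\kappa_2=H(-1)^{-1}$. So $\rho$ is a genuinely fully supported equilibrium with $\rho>0$ everywhere. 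In particular $\rho^{2-m}$ is bounded and $\eta^2-1>0$, so Lemma \ref{lemma:ABC-fs} and formula \eqref{eqn:A-fs-calc2} apply without degeneracy.

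Next I would record the two values that enter Theorem \ref{thm:minF}. Combining Proposition \ref{prop:calF-x0x1-ABC} with the third relation in \eqref{ABC-system-fs} gives $\mathcal{F}[\psi_{x_1}]=\kappa/m$ exactly; this mode is neutral, which is to be expected from rotational invariance. The decisive quantity is therefore $\mathcal{F}[\psi_{x_0}]$. The calculation preceding the statement reduces $\mathrm{sgn}(\mathcal{F}[\psi_{x_0}]^{-1}-m/\kappa)$ to minus the sign of the quadratic expression in \eqref{eqn:sign-diff}. It then shows that this expression is negative by exploiting $\frac{\d}{\d\eta}\ln H(\eta)>0$ (Lemma \ref{lem:H-monotone}) together with $H>0$. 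The outcome is \eqref{eqn:minlkm}, namely $\min(\mathcal{F}[\psi_{x_0}],\mathcal{F}[\psi_{x_1}])\le\mathcal{F}[\psi_{x_0}]<\kappa/m$.

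Theorem \ref{thm:minF}(a) then yields instability. For extra transparency I would note that no approximation is needed here. Since $\rho$ is bounded below by a positive constant, $\psi_{x_0}$ is bounded, so $\varepsilon\psi_{x_0}\in\mathcal{A}$ for a suitable scaling. Plugging it into \eqref{eqn:d2Edeps-0} gives a strictly negative second variation, while the first variation vanishes because $\mathrm{supp}(\psi_{x_0})\subset\mathrm{supp}(\rho)$.

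The main point needing care is the step where the strict monotonicity of $H$ is converted into the sign of the quadratic in \eqref{eqn:sign-diff}. I would double-check it in two places. First, the derivative $\frac{\d}{\d\eta}\ln H$ is correctly identified with the combination of $A,B,C$, with the sign conventions for $\eta\le -1$. Second, the divisions performed are by quantities of known sign: $A(\rho)>0$ forces $-\|c_\rho\|-\eta-\dm(m-1)\|c_\rho\|>0$, and $\eta^2-1>0$. One also needs strict positivity of $\frac{\d}{\d\eta}H$, not just monotonicity. I would take this from Lemma \ref{lem:H-monotone}, interpreting "increasing" there as having a positive derivative. If only weak monotonicity were available, I would instead argue strictness from analyticity of $H$ in $\eta$ on $(-\infty,-1)$.
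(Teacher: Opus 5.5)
Your proposal is correct and follows the paper's argument step for step: $\mathcal{F}[\psi_{x_1}]=\kappa/m$ from the third relation in \eqref{ABC-system-fs}; $\mathcal{F}[\psi_{x_0}]<\kappa/m$ by turning $\frac{\d}{\d\eta}\ln H>0$ into the sign of the quadratic in \eqref{eqn:sign-diff}; then Theorem \ref{thm:minF}(a), where your direct use of the bounded $\psi_{x_0}$ is the paper's remark after Lemma \ref{lem:approx}. I would drop the analyticity fallback, because analyticity only makes the zeros of $H'$ isolated, and at such a zero one gets exactly $\mathcal{F}[\psi_{x_0}]=\kappa/m$, where Theorem \ref{thm:minF} is inconclusive for a fully supported $\rho$; so, as in the paper, you genuinely need $H'>0$ pointwise from Lemma \ref{lem:H-monotone}.
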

\begin{proof}
The proof follows from the considerations above, replacing $\rho$ by $\rho_{\kappa,2}$. By \eqref{eqn:minlkm} and Theorem \ref{thm:minF} part (a), we conclude that $\rho_{\kappa,2}$ is unstable.
\end{proof}


\subsection{Strictly supported equilibria}
\label{subsect:mg2:cs}

We now investigate the stability of the strictly supported equilibria -- equilibria of type b) in Section \ref{subsect:cp}. Here, $\rho$ is given by  \eqref{eqn:equil-cs} or equivalently by \eqref{eqn:equil-cs-eta}, with the latter expression being more convenient as it uses the $\phi$ notation (i.e., the size of the support).

Similar to the fully-supported case, we will investigate $\mathcal{F}[\psi_{x_0}]$ and $\mathcal{F}[\psi_{x_1}]$. From \eqref{eqn:calF-x0x1-ABC} and \eqref{ABC-system} (third equation), we can obtain $\mathcal{F}[\psi_{x_1}]^{-1}$ directly:
\[
\mathcal{F}[\psi_{x_1}]^{-1}=\frac{1}{\dm}\left(A(\rho)-C(\rho)\right)=\frac{m}{\kappa}.
\]
Now, we use \eqref{ABC-system}, \eqref{eqn:cs-A}, and \eqref{eqn:calF-x0x1-ABC} to simplify $\mathcal{F}[\psi_{x_0}]^{-1}$ as follows:
\begin{align*}
\mathcal{F}[\psi_{x_0}]^{-1}&=C(\rho)-\frac{B(\rho)^2}{A(\rho)}\\
&=A(\rho)-\frac{m\dm}{\kappa}-\frac{\left(\cos\phi A(\rho)+\frac{m}{(m-1)\kappa \|c_\rho\|}\right)^2}{A(\rho)}\\
&=A(\rho)-\frac{m\dm}{\kappa}-\cos^2\phi A(\rho)-\frac{2m \cos\phi }{(m-1)\kappa \|c_\rho\|}-\frac{m^2}{(m-1)^2\kappa^2 \|c_\rho\|^2 A(\rho)}\\
&=(1-\cos^2\phi)A(\rho)-\frac{m\dm}{\kappa}-\frac{2m\cos\phi }{(m-1)\kappa \|c_\rho\|}-\frac{m^2}{(m-1)^2\kappa^2 \|c_\rho\|^2 A(\rho)}\\
&=-\frac{m}{\kappa}\left(-\frac{1}{m-1}-\frac{\cos\phi}{(m-1)\|c_\rho\|}-\dm\right)-\frac{m\dm}{\kappa}-\frac{2m \cos\phi }{(m-1)\kappa \|c_\rho\|}-\frac{m^2}{(m-1)^2\kappa^2 \|c_\rho\|^2 A(\rho)}\\
&=\frac{m}{\kappa(m-1)}-\frac{m \cos\phi }{(m-1)\kappa \|c_\rho\|}-\frac{m^2}{(m-1)^2\kappa^2 \|c_\rho\|^2 A(\rho)}.
\end{align*}
Then, we have
\begin{align*}
\mathcal{F}[\psi_{x_0}]^{-1}-\frac{m}{\kappa}&=\frac{m(2-m)}{\kappa(m-1)}-\frac{m \cos\phi }{(m-1)\kappa \|c_\rho\|}-\frac{m^2}{(m-1)^2\kappa^2 \|c_\rho\|^2 A(\rho)}\\
&=\frac{m}{(m-1)\kappa}\left(2-m-\frac{\cos\phi}{\|c_\rho\|}-\frac{m}{(m-1)\kappa \|c_\rho\|^2 A(\rho)}\right)\\
&=\frac{m}{(m-1)\kappa}\left(2-m-\frac{\cos\phi}{\|c_\rho\|}-\frac{\sin^2 \phi}{ \|c_\rho\|^2 \left(1+\frac{\cos\phi}{\|c_\rho\|}+\dm(m-1)\right)}\right)\\
&=-\frac{m}{(m-1)\kappa \|c_\rho\|}\left((m-2) \| c_\rho \|+\cos\phi+\frac{\sin^2 \phi}{ \|c_\rho\|+\cos\phi+\dm(m-1)\|c_\rho\|}\right)\\
&=-\frac{m}{(m-1)\kappa \|c_\rho\|(\|c_\rho\|+\cos\phi+ \dm(m-1)\|c_\rho\|)}\\[5pt]
&\hspace{2cm}\times\left(((m-2)\|c_\rho\|+\cos\phi)(\|c_\rho\|+\cos\phi+\dm(m-1)\|c_\rho\|)+\sin^2\phi\right),
\end{align*}
where for the third equal sign we used \eqref{eqn:cs-A}.

From \eqref{eqn:cs-A}, we have
\[
\mathrm{sgn}\left(\|c_\rho\|+\cos\phi+\|c_\rho\|\dm(m-1)\right)=\mathrm{sgn}(A(\rho))=+1,
\]
which combined with the calculation above yields
\begin{align}
\begin{aligned}\label{eqn:F-m/k}
\mathrm{sgn}\left(\mathcal{F}[\psi_{x_0}]^{-1}-\frac{m}{\kappa}\right)
&=-\mathrm{sgn}\left(( (m-2)\|c_\rho\|+\cos\phi)(\|c_\rho\|+\cos\phi+\dm(m-1) \|c_\rho\|)+\sin^2\phi\right).
\end{aligned}
\end{align}

Remarkably, the sign of the expression on the r.h.s. of \ref{eqn:F-m/k} was investigated in the proof of Proposition \ref{prop:sgnFp} (see \eqref{eqn:G} and \eqref{eqn:signG}) to determine the monotonicity of the function $F(\phi)$. By \eqref{eqn:F-m/k} and \eqref{eqn:sgnF-2} we have
\begin{equation}
\label{eqn:sgnFpG}
\mathrm{sgn}\left(\mathcal{F}[\psi_{x_0}]^{-1}-\frac{m}{\kappa}\right)=-\mathrm{sgn}\left(F'(\phi)\right),
\end{equation}
which implies
\begin{equation}
\label{eqn:signdiff-sgnFp}
\mathrm{sgn}\left(\mathcal{F}[\psi_{x_0}]-\frac{\kappa}{m}\right)=\mathrm{sgn}\left(F'(\phi)\right).
\end{equation}

\begin{theorem}[Stability of strictly supported equilibria]
\label{thm:mg2:cs-stab}
The strictly supported equilibrium $\rho_{\kappa,1}$ from Theorem \ref{thm:equigenerald} (see Proposition \ref{prop:bif-mg2}, equation \eqref{eqn:rhok-cs-mg2} and also the black solid line in Figure \ref{fig:m35-splot}(b)) is stable; note that this equilibrium exists for all $\kappa>\kappa_3$. On the other hand, the strictly supported equilibrium $\rho_{\kappa,2}$, which exists for $\kappa_3<\kappa<\kappa_2$ (see equation \eqref{eqn:rhok-cs-mg2} and the black dashed line in Figure \ref{fig:m35-splot}(b)), is unstable.
\end{theorem}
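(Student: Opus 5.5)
The plan is to combine Theorem \ref{thm:minF} with the sign identity \eqref{eqn:signdiff-sgnFp} and the monotonicity of $F$ from Proposition \ref{prop:sgnFp}. For any strictly supported equilibrium $\rho$ with support size $\phi\in(0,\pi)$, the computations preceding the theorem give $\mathcal{F}[\psi_{x_1}]=\frac{\kappa}{m}$ exactly. They also give $\mathrm{sgn}\bigl(\mathcal{F}[\psi_{x_0}]-\frac{\kappa}{m}\bigr)=\mathrm{sgn}(F'(\phi))$. Hence $\min(\mathcal{F}[\psi_{x_0}],\mathcal{F}[\psi_{x_1}])$ is either equal to $\frac{\kappa}{m}$ or strictly smaller, according to whether $F'(\phi)\geq 0$ or $F'(\phi)<0$.

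\emph{The branch $\rho_{\kappa,1}$.} By construction in Theorem \ref{thm:equigenerald}, $\phi_{\kappa,1}\in(0,\bar{\phi})$ for every $\kappa>\kappa_3$. Proposition \ref{prop:sgnFp} then gives $F'(\phi_{\kappa,1})>0$, so $\mathcal{F}[\psi_{x_0}]>\frac{\kappa}{m}$. The minimum is therefore attained by $\mathcal{F}[\psi_{x_1}]$ and equals $\frac{\kappa}{m}$. Since $\rho_{\kappa,1}$ is strictly supported, part (c) of Theorem \ref{thm:minF} applies and yields stability.

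The case $\kappa=\kappa_3$ is not part of the statement, since $\rho_{\kappa,1}$ is asserted only for $\kappa>\kappa_3$. If one wished to include it, $\phi=\bar\phi$ gives $F'=0$, so both values of $\mathcal{F}$ equal $\frac{\kappa}{m}$, and (c) would again give stability.

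\emph{The branch $\rho_{\kappa,2}$.} For $\kappa_3<\kappa<\kappa_2$ we have $\phi_{\kappa,2}\in(\bar{\phi},\pi)$, so $F'(\phi_{\kappa,2})<0$. Then $\mathcal{F}[\psi_{x_0}]<\frac{\kappa}{m}$, hence $\min(\mathcal{F}[\psi_{x_0}],\mathcal{F}[\psi_{x_1}])<\frac{\kappa}{m}$, and part (a) of Theorem \ref{thm:minF} gives instability. The approximating perturbations in $\mathcal{A}$ required by part (a) are supplied by Lemma \ref{lem:approx}.

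The main point to watch is part (c) for $\rho_{\kappa,1}$. Stability there rests on the infimum $\frac{\kappa}{m}$ over $\mathcal{A}$ not being attained. This holds because every minimizer over $\tilde{\mathcal{A}}$ has the form $\alpha\psi_y$, which blows up at $\partial\,\mathrm{supp}(\rho)$ when $m>2$ and so lies outside $\mathcal{A}$. I would simply invoke Theorem \ref{thm:minF}(c) as stated, noting that \eqref{eqn:signdiff-sgnFp} was derived precisely for strictly supported equilibria, so all hypotheses match. The remaining ingredients are direct applications of earlier results.
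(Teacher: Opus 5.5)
Your proof is correct and is the paper's own argument. It takes $\mathcal{F}[\psi_{x_1}]=\frac{\kappa}{m}$ together with \eqref{eqn:signdiff-sgnFp} and Proposition \ref{prop:sgnFp}, then applies Theorem \ref{thm:minF}(c) to $\rho_{\kappa,1}$ (where $\phi\in(0,\bar{\phi})$) and Theorem \ref{thm:minF}(a) to $\rho_{\kappa,2}$ (where $\phi\in(\bar{\phi},\pi)$).

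One caution about your side remarks. Non-attainment in $\mathcal{A}$ needs the minimizers to be unbounded \emph{below} near $\partial\,\mathrm{supp}(\rho)$, not merely to blow up, since $\mathcal{A}$ only imposes a lower bound. This does hold for the only minimizers when $F'(\phi)>0$, namely nonzero multiples of $\psi_y$ with $y\perp x_0$. It fails for $-\psi_{x_0}$, which tends to $+\infty$ at the boundary and has its negative part on a cap where $\rho$ is bounded away from $0$, so a small multiple of it lies in $\mathcal{A}$. At $\phi=\bar{\phi}$ this function attains $\frac{\kappa}{m}$ and the second variation vanishes along it. Your aside that (c) also gives stability at $\kappa=\kappa_3$ is therefore unjustified; the statement itself, for $\kappa>\kappa_3$, is unaffected.
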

\begin{proof}
The proof follows from \eqref{eqn:signdiff-sgnFp} and Proposition \ref{prop:sgnFp}. We distinguish two cases.
\smallskip

\noindent(Case 1: $\mathrm{sgn}\left(F'(\phi)\right)>0$) This corresponds to the equilibrium $\rho_{\kappa,1}$, for which $0<\phi < \bar{\phi}$. By the calculations above (replace $\rho$ by $\rho_{\kappa,1}$), we get from \eqref{eqn:signdiff-sgnFp} that
\[
\min(\mathcal{F}[\psi_{x_0}],\mathcal{F}[\psi_{x_1}])=\frac{\kappa}{m}.
\]
Using Theorem \ref{thm:minF} part (c) we infer that $\rho_{\kappa,1}$ is stable.
\medskip

\noindent(Case 2: $\mathrm{sgn}\left(F'(\phi)\right)<0$) This corresponds to the equilibrium $\rho_{\kappa,2}$, for which $\bar{\phi}<\phi < \pi$, as $\kappa$ ranges in $\kappa_3<\kappa<\kappa_2$. In this case (use calculations above with $\rho$ replaced by $\rho_{\kappa,2}$), from \eqref{eqn:signdiff-sgnFp} we find
\[
\min(\mathcal{F}[\psi_{x_0}],\mathcal{F}[\psi_{x_1}])<\frac{\kappa}{m}.
\]
Then, by Theorem \ref{thm:minF} part (a) we conclude that $\rho_{\kappa,2}$ is unstable.
\end{proof}

\begin{remark}
For $m>2$, at $\kappa=\kappa_1$ we have a subcritical pitchfork bifurcation, whereas at $\kappa=\kappa_3$ we note a saddle-node bifurcation. At $\kappa=\kappa_1$, the uniform distribution becomes unstable by Proposition \ref{prop:stab-unif}, while a branch of unstable equilibria emerges (as $\kappa$ decreases through $\kappa_1$) by Theorem \ref{thm:mg2:fs-stab}. This is in contrast to the case $1<m<2$ described in Remark \ref{rmk:m12}, where the bifurcation at $\kappa=\kappa_1$ is supercritical pitchfork. On the other hand, at $\kappa=\kappa_3$, a pair of strictly supported equilibria is born, one stable and the other unstable (see Theorem \ref{thm:mg2:cs-stab}).
\end{remark}


\appendix
\section{$A(\rho)$, $B(\rho)$, and $C(\rho)$ for fully supported equilibria $\rho$}\label{appendix:fs}

These calculations support results in Section \ref{subsect:mg2:fs}, in particular the proof of Lemma \ref{lemma:ABC-fs}. Take a fully supported equilibrium in the form \eqref{eqn:equil-fs-eta}, and compute using \eqref{eqn:ABC}:
\begin{equation}
\label{eqn:A-fs-calc1}
A(\rho)
=\dm w_\dm\int_0^\pi\left(\frac{(m-1)\kappa \|c_\rho\|}{m}\right)^{\frac{2-m}{m-1}}\left(\cos\theta-\eta\right)^{\frac{2-m}{m-1}}\sin^{\dm-1}\theta \, \d\theta,
\end{equation}
and
\begin{equation}
\label{eqn:B-fs-calc1}
\begin{aligned}
B(\rho)
&=\dm w_\dm\int_0^\pi\left(\frac{(m-1)\kappa \|c_\rho\|}{m}\right)^{\frac{2-m}{m-1}}\left(\cos\theta-\eta\right)^{\frac{2-m}{m-1}}\sin^{\dm-1}\theta \cos\theta \, \d\theta\\
&=\dm w_\dm\int_0^\pi\left(\frac{(m-1)\kappa \|c_\rho\|}{m}\right)^{\frac{2-m}{m-1}}\left(\cos\theta-\eta\right)^{\frac{1}{m-1}}\sin^{\dm-1}\theta \, \d\theta\\
&\quad +\eta \, \dm w_\dm\int_0^\pi\left(\frac{(m-1)\kappa \|c_\rho\|}{m}\right)^{\frac{2-m}{m-1}}\left(\cos\theta-\eta\right)^{\frac{2-m}{m-1}}\sin^{\dm-1}\theta \,\d\theta\\
&=\frac{m}{(m-1)\kappa \|c_\rho\|} + \eta A(\rho),
\end{aligned}
\end{equation}
where in the calculation of $B(\rho)$ we wrote $\cos \theta = \cos \theta - \eta + \eta$ to get from the first to the second line, and used \eqref{eqn:1-comp} for the last line. This shows the first equation in \eqref{ABC-system-fs}.

Also, 
\begin{equation}
\label{eqn:C-fs-calc1}
\begin{aligned}
C(\rho)
&=\dm w_\dm\int_0^\pi\left(\frac{(m-1)\kappa \|c_\rho\|}{m}\right)^{\frac{2-m}{m-1}}\left(\cos\theta-\eta\right)^{\frac{2-m}{m-1}}\sin^{\dm-1}\theta \cos^2\theta\d\theta \\
&= A(\rho) - \dm w_\dm\int_0^\pi\left(\frac{(m-1)\kappa \|c_\rho\|}{m}\right)^{\frac{2-m}{m-1}}\left(\cos\theta-\eta\right)^{\frac{2-m}{m-1}}\sin^{\dm+1}\theta \d\theta,
\end{aligned}
\end{equation}
where we wrote $\cos^2 \theta = 1- \sin^2 \theta$, and used \eqref{eqn:A-fs-calc1}. 
By integration by parts, we get
\begin{align*}
&\int_0^\pi (\cos\theta-\eta)^{\frac{2-m}{m-1}}\sin^{\dm+1}\theta\d\theta\\
&\quad =\left[-(m-1)(\cos\theta-\eta)^{\frac{1}{m-1}}\sin^\dm\theta\right]_0^\pi+(m-1)\dm\int_0^\pi (\cos\theta-\eta)^{\frac{1}{m-1}}\sin^{\dm-1}\theta\cos\theta\d\theta\\
&\quad =(m-1)\dm\int_0^\pi (\cos\theta-\eta)^{\frac{1}{m-1}}\sin^{\dm-1}\theta\cos\theta\d\theta,
\end{align*}
which used together with \eqref{eqn:s-comp} in \eqref{eqn:C-fs-calc1}, leads to
\begin{equation}
\label{eqn:C-fs-calc2}
\begin{aligned}
   C(\rho) &= A(\rho) - \dm w_\dm \left(\frac{(m-1)\kappa \|c_\rho\|}{m}\right)^{\frac{2-m}{m-1}} (m-1) \dm \int_0^\pi (\cos\theta-\eta)^{\frac{1}{m-1}}\sin^{\dm-1}\theta\cos\theta\d\theta \\
   & = A(\rho) - \frac{m}{(m-1)\kappa \|c_\rho\|} (m-1) \dm \|c_\rho\| \\
   & = A(\rho) - \frac{m \dm }{\kappa}.
\end{aligned}
\end{equation}
This is the third identity in \eqref{ABC-system-fs}.

We can also establish a relationship between $B(\rho)$ and $C(\rho)$ in the following way. In the expression of $C(\rho)$ in \eqref{eqn:C-fs-calc1} (the first line), write $\cos^2 \theta = (\cos \theta - \eta + \eta) \cos \theta$ and compute using  \eqref{eqn:s-comp} and \eqref{eqn:B-fs-calc1} (first line):
\begin{equation}
\label{eqn:C-fs-calc3}
\begin{aligned}
C(\rho)&= \dm w_\dm\int_0^\pi\left(\frac{(m-1)\kappa \|c_\rho\|}{m}\right)^{\frac{2-m}{m-1}}\left(\cos\theta-\eta\right)^{\frac{1}{m-1}}\sin^{\dm-1}\theta \cos\theta\d\theta \\
&\quad + \eta \, \dm w_\dm\int_0^\pi\left(\frac{(m-1)\kappa \|c_\rho\|}{m}\right)^{\frac{2-m}{m-1}}\left(\cos\theta-\eta\right)^{\frac{2-m}{m-1}}\sin^{\dm-1}\theta \cos\theta\d\theta \\
&=\frac{m}{(m-1)\kappa}+\eta B(\rho).
\end{aligned}
\end{equation}
The second relationship in \eqref{ABC-system-fs} is now shown as well.

Finally, by \eqref{eqn:C-fs-calc3} and \eqref{eqn:B-fs-calc1} (last line), we obtain
\[
C(\rho)=\frac{m}{(m-1)\kappa}+\frac{m\eta}{(m-1)\kappa \|c_\rho\|}+\eta^2 A(\rho),
\]
which combined with \eqref{eqn:C-fs-calc2}, yields
\begin{equation}
A(\rho)-\frac{m\dm}{\kappa}=\frac{m}{(m-1)\kappa}+\frac{m\eta}{(m-1)\kappa \|c_\rho\|}+\eta^2 A(\rho).
\end{equation}
Solving for $A(\rho)$ in the above then gives \eqref{eqn:A-fs-calc2}.
\smallskip

\section{$A(\rho)$, $B(\rho)$, and $C(\rho)$ for strictly supported equilibria $\rho$}
\label{appendix:ps}

These calculations are in support of Lemma \ref{lemma:ABC-cs}. Consider a strictly supported equilibrium in the form \eqref{eqn:equil-cs-eta}, and substitute \eqref{eqn:equil-cs-eta} into \eqref{eqn:ABC} to compute $A(\rho)$, $B(\rho)$, and $C(\rho)$. We get
\begin{equation}
\label{eqn:calcA-int}
A(\rho)=\dm w_\dm\int_0^\phi\left(\frac{(m-1)\kappa \|c_\rho\|}{m}\right)^{\frac{2-m}{m-1}}\left(\cos\theta-\cos\phi\right)^{\frac{2-m}{m-1}}\sin^{\dm-1}\theta\d\theta
\end{equation}
and
\begin{align*}
B(\rho)&=\dm w_\dm\int_0^\phi\left(\frac{(m-1)\kappa \|c_\rho\|}{m}\right)^{\frac{2-m}{m-1}}\left(\cos\theta-\cos\phi\right)^{\frac{2-m}{m-1}}\sin^{\dm-1}\theta\cos\theta\d\theta\\
&=\dm w_\dm\int_0^\phi\left(\frac{(m-1)\kappa \|c_\rho\|}{m}\right)^{\frac{1}{m-1}}\left(\cos\theta-\cos\phi\right)^{\frac{2-m}{m-1}}\sin^{\dm-1}\theta\d\theta\\
&+\dm w_\dm\int_0^\phi\left(\frac{(m-1)\kappa \|c_\rho\|}{m}\right)^{\frac{2-m}{m-1}}\left(\cos\theta-\cos\phi\right)^{\frac{2-m}{m-1}}\sin^{\dm-1}\theta\cos\phi\d\theta\\
&=\frac{m}{(m-1)\kappa\|c_\rho\|}+\cos\phi A(\rho),
\end{align*}
where in the calculation of $B(\rho)$ we used $\cos\theta=(\cos\theta-\cos\phi)+\cos\phi$ in the second equality, together with \eqref{eqn:equil-cs-1} and \eqref{eqn:calcA-int} in the third equality. The first relationship in \eqref{ABC-system} is now shown.

For the second identity in \eqref{ABC-system}, we use again $\cos\theta=(\cos\theta-\cos\phi)+\cos\phi$ to get
\begin{align*}
C(\rho)&=\dm w_\dm\int_0^\phi\left(\frac{(m-1)\kappa \|c_\rho\|}{m}\right)^{\frac{2-m}{m-1}}\left(\cos\theta-\cos\phi\right)^{\frac{2-m}{m-1}}\sin^{\dm-1}\theta\cos^2\theta\d\theta\\
&=\dm w_\dm\int_0^\phi\left(\frac{(m-1)\kappa \|c_\rho\|}{m}\right)^{\frac{2-m}{m-1}}\left(\cos\theta-\cos\phi\right)^{\frac{1}{m-1}}\sin^{\dm-1}\theta\cos\theta\d\theta\\
&+\dm w_\dm\int_0^\phi\left(\frac{(m-1)\kappa \|c_\rho\|}{m}\right)^{\frac{2-m}{m-1}}\left(\cos\theta-\cos\phi\right)^{\frac{2-m}{m-1}}\sin^{\dm-1}\theta\cos\theta\cos\phi\d\theta.
\end{align*}
If we further use \eqref{eqn:equil-cs-1} and the definition of $B(\rho)$, we get
\[
C(\rho)=\frac{m}{(m-1)\kappa}+\cos\phi B(\rho).
\]

For the third relationship, we calculate
\begin{align*}
A(\rho)-C(\rho)&=\dm w_\dm\left(\frac{m}{(m-1)\kappa \|c_\rho\|}\right)\int_0^\phi\left(\frac{(m-1)\kappa \|c_\rho\|}{m}\right)^{\frac{1}{m-1}}\left(\cos\theta-\cos\phi\right)^{\frac{2-m}{m-1}}\sin^{\dm+1}\theta\d\theta.
\end{align*}
To simplify the above, we use integration by parts:
\begin{align*}
&\int_0^\phi (\cos\theta-\cos\phi)^{\frac{1}{m-1}-1}\sin^{\dm+1}\theta\d\theta\\
&=-\left[(m-1)(\cos\theta-\cos\phi)^{\frac{1}{m-1}}\sin^\dm\theta\right]^{\phi}_0+(m-1)\dm\int_0^\phi(\cos\theta-\cos\phi)^{\frac{1}{m-1}}\sin^{\dm-1}\theta\cos\theta\d\theta\\
&=(m-1)\dm\int_0^\phi(\cos\theta-\cos\phi)^{\frac{1}{m-1}}\sin^{\dm-1}\theta\cos\theta\d\theta,
\end{align*}
which yields
\begin{align*}
A(\rho)-C(\rho) &=    \dm^2 w_\dm\left(\frac{m}{\kappa \|c_\rho\|}\right)\int_0^\phi \left(\frac{(m-1)\kappa \|c_\rho\|}{m}\right)^{\frac{1}{m-1}} (\cos\theta-\cos\phi)^{\frac{1}{m-1}}\sin^{\dm-1}\theta\cos\theta\d\theta \\
&=\frac{\dm m}{\kappa},
\end{align*}
where for the second equal sign we used \eqref{eqn:equil-cs-s}.

Finally, \eqref{eqn:cs-A} follows by a direct calculation from \eqref{ABC-system}. These calculations prove Lemma \ref{lemma:ABC-cs}.

\section{Supporting results for the proof of Proposition \ref{prop:sgnFp}}
\label{appendix:signG}
\begin{lemma}
\label{appendix:lemma1}
Let $s(\phi)$ be defined as \eqref{def:s}. Then, we have
\begin{align}\label{eqn:sprime}
\frac{\d s}{\d\phi}=\frac{1+\dm(m-1)s \cos\phi -(1+\dm(m-1))s^2}{(m-1)\sin\phi},\qquad\forall \phi\in(0, \pi).
\end{align}
\end{lemma}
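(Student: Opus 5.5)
Write $s=N/D$, where
\[
N(\phi)=\int_0^\phi(\cos\theta-\cos\phi)^{\frac{1}{m-1}}\sin^{\dm-1}\theta\cos\theta\,\d\theta, \qquad D(\phi)=\int_0^\phi(\cos\theta-\cos\phi)^{\frac{1}{m-1}}\sin^{\dm-1}\theta\,\d\theta .
\]
The plan is to compute $N'$ and $D'$ and then express every resulting integral through $N$ and $D$ alone, using the same algebraic identities that gave Lemma \ref{lemma:ABC-cs}. This is best done with the unnormalized integrals, so that $\kappa$ and $\|c_\rho\|$ never appear.

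Since the integrand vanishes at $\theta=\phi$ (because $\frac{1}{m-1}>0$), differentiating under the integral sign gives
\[
N'=\frac{\sin\phi}{m-1}\,b, \qquad D'=\frac{\sin\phi}{m-1}\,a,
\]
where $a=\int_0^\phi(\cos\theta-\cos\phi)^{\frac{2-m}{m-1}}\sin^{\dm-1}\theta\,\d\theta$ and $b,c$ are the same integrals with an extra factor $\cos\theta$, respectively $\cos^2\theta$. These are finite for $m>1$ by the integrability remark. Writing $\cos\theta=(\cos\theta-\cos\phi)+\cos\phi$ yields $b=D+\cos\phi\,a$ and $c=N+\cos\phi\,b$. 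The integration by parts used in Appendix \ref{appendix:ps} yields $a-c=(m-1)\dm N$. Solving this linear system gives
\[
a\sin^2\phi = D+\cos\phi\,N+(m-1)\dm N, \qquad b=D+\cos\phi\,a .
\]

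Next I would use the quotient rule, $s'=(N'D-ND')/D^2=\frac{\sin\phi}{(m-1)D}\,(b-s\,a)$. Substituting $b-sa=D+(\cos\phi-s)a$ and the formula for $a$, then dividing by $D$ with $N=sD$, gives
\[
s'=\frac{\sin\phi}{m-1}\left(1+\frac{(\cos\phi-s)(1+s\cos\phi+(m-1)\dm s)}{\sin^2\phi}\right).
\]
Expanding the numerator gives $\sin^2\phi+\cos\phi+s\cos^2\phi+(m-1)\dm s\cos\phi-s-s^2\cos\phi-(m-1)\dm s^2$. Using $s\cos^2\phi-s=-s\sin^2\phi$, this becomes $\sin^2\phi\,(1-s\cos\phi)+\cos\phi+(m-1)\dm s\cos\phi-s^2\cos\phi-(m-1)\dm s^2$.

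This does not obviously collapse to the claimed form, which is the main obstacle. The likely cause is a slip in the expression for $a\sin^2\phi$, so I would recheck the linear solve before anything else. From $c=N+\cos\phi(D+\cos\phi\,a)$ and $a-c=(m-1)\dm N$ we get $a\sin^2\phi=N+\cos\phi\,D+(m-1)\dm N$, so the roles of $N$ and $D$ in the middle term were swapped. Dividing by $D$, the numerator becomes $\sin^2\phi+(\cos\phi-s)(s+\cos\phi+(m-1)\dm s)$. Expanding it gives $\sin^2\phi+\cos^2\phi+(m-1)\dm s\cos\phi-s^2-(m-1)\dm s^2$, which equals $1+\dm(m-1)s\cos\phi-(1+\dm(m-1))s^2$. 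Multiplying by $\frac{\sin\phi}{(m-1)\sin^2\phi}$ gives \eqref{eqn:sprime}.

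The only analytic point is justifying the differentiation under the integral sign when $m>2$, since then $(\cos\theta-\cos\phi)^{\frac{2-m}{m-1}}$ blows up at $\theta=\phi$. I would handle this as follows. After the substitution $u=\cos\theta$, the integrals become $\int_{\cos\phi}^1 (u-\cos\phi)^{\frac{1}{m-1}}g(u)\,\d u$ with $g$ smooth. Differentiating in the lower limit $t=\cos\phi$ is then a standard computation, with an integrable derivative since the exponent $\frac{2-m}{m-1}$ exceeds $-1$. Alternatively, one can differentiate $\int_0^{1-t}v^{\frac{1}{m-1}}g(t+v)\,\d v$ directly.
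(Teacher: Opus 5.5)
Your proof is correct and follows essentially the paper's route: the quotient rule for $s=N/D$, the two splittings $\cos\theta=(\cos\theta-\cos\phi)+\cos\phi$ together with the integration by parts (the relations of Lemma \ref{lemma:ABC-cs}), and a linear solve for $a$. Your corrected identity $a\sin^2\phi=N+\cos\phi\,D+(m-1)\dm N$ is exactly \eqref{eqn:cs-A} in unnormalized form, so in the write-up you should state only this version and drop the earlier swapped one.

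Using unnormalized integrals is a mild improvement, since it avoids attaching an equilibrium (and hence $\kappa$ and $\|c_\rho\|$) to each $\phi$. Your justification of differentiating under the integral sign for $m>2$ addresses a point the paper passes over. One small correction there: $g$ is only continuous, not smooth, at $u=1$ when $\dm$ is odd. Continuity is still enough for your lower-limit argument.
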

\begin{proof}
First, we use the quotient rule to get from \eqref{def:s}:
\begin{align}
\begin{aligned}\label{eqn:sprimefirst}
\frac{\d s}{\d\phi}&=\frac{\sin\phi}{m-1}\frac{\int_0^\phi(\cos\theta-\cos\phi)^{\frac{2-m}{m-1}}\sin^{\dm-1}\theta\cos\theta\d\theta}{\int_0^\phi
(\cos\theta-\cos\phi)^{\frac{1}{m-1}}
\sin^{d-1}\theta\,d\theta}\\
&\quad -\frac{\sin\phi}{m-1}\frac{\left(\int_0^\phi(\cos\theta-\cos\phi)^{\frac{1}{m-1}}\sin^{\dm-1}\theta\cos\theta\d\theta\right)\left(\int_0^\phi(\cos\theta-\cos\phi)^{\frac{2-m}{m-1}}\sin^{\dm-1}\theta\d\theta\right)}{\left(\int_0^\phi
(\cos\theta-\cos\phi)^{\frac{1}{m-1}}
\sin^{d-1}\theta\,d\theta\right)^2}.
\end{aligned}
\end{align}
From \eqref{eqn:equil-cs-eta} and the definitions of $A(\rho)$, $B(\rho)$, and $C(\rho)$ in \eqref{eqn:ABC}, we can write the r.h.s. of \eqref{eqn:sprimefirst}
as
\begin{align*}
&\frac{\sin\phi}{m-1}\left(\frac{B(\rho)}{B(\rho)-\cos\phi A(\rho)}-\frac{(C(\rho)-\cos\phi B(\rho))A(\rho)}{(B(\rho)-\cos\phi A(\rho))^2}\right)\\
&=\frac{\sin\phi }{(m-1)(B(\rho)-\cos\phi A(\rho))^2}\left(
B(\rho)(B(\rho)-\cos\phi A(\rho))-A(\rho)(C(\rho)-\cos\phi B(\rho))
\right).
\end{align*}

Now simplify $B(\rho)-\cos\phi A(\rho)$ and $C(\rho)-\cos\phi B(\rho)$ from the first two equations in \eqref{ABC-system} to write the r.h.s. above as
\begin{align*}
\frac{\sin\phi }{(m-1)\left(\frac{m}{(m-1)\kappa s}\right)^2}\left(
\frac{m B(\rho)}{(m-1)\kappa s}-\frac{m A(\rho)}{(m-1)\kappa}
\right)&=\frac{\kappa\sin\phi }{m}(sB(\rho)-s^2 A(\rho)) \\
&=\frac{\kappa \sin\phi}{m}\left(\frac{m}{(m-1)\kappa}+(s\cos\phi-s^2)A(\rho)\right),
\end{align*}
where for the second equal sign we used the first equation in \eqref{ABC-system} to substitute for $B(\rho)$.

Finally, we use $\eqref{eqn:cs-A}$ to get
\begin{align*}
&\frac{\kappa \sin\phi}{m}\left(\frac{m}{(m-1)\kappa}+(s\cos\phi-s^2)A(\rho)\right) \\
&\qquad = \frac{\kappa \sin\phi}{m}\left(\frac{m}{(m-1)\kappa}+(s\cos\phi-s^2)\times \frac{m}{\kappa\sin^2\phi}\left(\frac{1}{m-1}+\frac{\cos\phi}{(m-1)s}+\dm\right)\right)\\
&\qquad =\sin\phi\left(\frac{1}{(m-1)}+(s\cos\phi-s^2)\times \frac{1}{\sin^2\phi}\left(\frac{1}{m-1}+\frac{\cos\phi}{(m-1)s}+\dm\right)\right).
\end{align*}

Going back to \eqref{eqn:sprimefirst}, from these calculations we find
\begin{equation}
\label{eqn:dsdphi-int}
\frac{\d s}{\d\phi}=\sin\phi\left(\frac{1}{(m-1)}+(s\cos\phi-s^2)\times \frac{1}{\sin^2\phi}\left(\frac{1}{m-1}+\frac{\cos\phi}{(m-1)s}+\dm\right)\right).
\end{equation}
Now multiply $(m-1)\sin\phi$ to both sides of \eqref{eqn:dsdphi-int} and simplify, to get
\begin{align*}
(m-1)\sin\phi\frac{\d s}{\d\phi}&=\sin^2\phi+(s\cos\phi-s^2)\left(1+\frac{\cos\phi}{s}+\dm(m-1)\right)\\
&=\sin^2\phi+(\cos\phi-s)((1+\dm(m-1))s+\cos\phi)\\
&=\sin^2\phi+\cos^2\phi+\dm(m-1)s\cos\phi-s^2(1+\dm(m-1))\\
&=1+\dm(m-1)s \cos\phi-(1+\dm(m-1))s^2.
\end{align*}
Therefore, we get the desired result.
\end{proof}

\begin{lemma}\label{lem:svd}
Let $s(\phi)$ be defined as \eqref{def:s}. Then, $s(\phi)$ is decreasing and
\begin{align}\label{slimvalue}
\lim_{\phi\to0+}s(\phi)=1,\qquad\lim_{\phi\to\pi-}s(\phi)=\frac{1}{1+\dm(m-1)}.
\end{align}
Furthermore, we get
\begin{align}\label{srange}
\frac{1}{1+\dm(m-1)}< s(\phi)< 1, \qquad\forall\phi\in(0, \pi).
\end{align}
\end{lemma}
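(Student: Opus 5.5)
Let $a:=1+\dm(m-1)$ and write the numerator of \eqref{eqn:sprime} as $N(s,\phi)=1+\dm(m-1)s\cos\phi-a s^2$. The argument has three steps: compute the two endpoint limits directly from \eqref{def:s}, prove the bounds \eqref{srange} using the ODE of Lemma \ref{appendix:lemma1}, and then read monotonicity off the sign of $N$.

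\textbf{Limit at $0$.} As $\phi\to0+$, the support of the weight $(\cos\theta-\cos\phi)^{\frac{1}{m-1}}\sin^{\dm-1}\theta$ shrinks to $\{\theta=0\}$. The quotient in \eqref{def:s} is a weighted average of $\cos\theta$ over $[0,\phi]$, so it lies in $[\cos\phi,1]$. Hence $s(\phi)\to1$.

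\textbf{Limit at $\pi$.} At $\phi=\pi$ the integrands are integrable, so the limit is the ratio of the integrals with $\cos\phi=-1$. To evaluate it I integrate by parts, as in Appendix \ref{appendix:ps}, using $\frac{\d}{\d\theta}(\cos\theta+1)^{\frac{m}{m-1}}=-\frac{m}{m-1}(\cos\theta+1)^{\frac{1}{m-1}}\sin\theta$.
\begin{itemize}
\item Writing $(\cos\theta+1)^{\frac1{m-1}}\sin^{\dm-1}\theta\cos\theta=(\cos\theta+1)^{\frac1{m-1}}\sin^{\dm-1}\theta\,[(\cos\theta+1)-1]$ gives $I_1=J-I_0$, where $I_1$ is the numerator, $I_0$ the denominator, and $J=\int_0^\pi(\cos\theta+1)^{\frac{m}{m-1}}\sin^{\dm-1}\theta\d\theta$.
\item Integrating by parts in $J$, with $\sin^{\dm-2}\theta$ differentiated, produces $J=\frac{m}{(m-1)(\dm-1)}\int_0^\pi(\cos\theta+1)^{\frac1{m-1}}\sin^{\dm+1}\theta\d\theta$.
\item Substituting $\sin^2\theta=(1-\cos\theta)(1+\cos\theta)$ turns this into a linear relation among $I_0$, $I_1$ and $J$.
\end{itemize}
Solving the linear system should give $I_1/I_0=1/a$. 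Alternatively, I can obtain the same value by passing to the limit in the third relation $A-C=\dm m/\kappa$ of Lemma \ref{lemma:ABC-cs} together with \eqref{eqn:cs-A}, but the direct integral computation is cleaner.

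\textbf{Bounds \eqref{srange}.} I use a barrier argument with the ODE \eqref{eqn:sprime}.
\begin{itemize}
\item \emph{Upper bound.} Since $s$ is a weighted average of $\cos\theta$ with nonconstant weight, $s<1$ holds directly.
\item \emph{Lower bound.} At $s=1/a$ we get $N=1-\frac1a+\frac{\dm(m-1)}{a}\cos\phi=\frac{\dm(m-1)}{a}(1+\cos\phi)>0$. So if $s(\phi_*)=1/a$ at some interior point, then $s'(\phi_*)>0$, meaning $s$ can only cross the level $1/a$ upward. Suppose $s(\phi_*)\le 1/a$ for some $\phi_*$. Then $s\le1/a$ on all of $[\phi_*,\pi)$ — in fact $s<1/a$ after $\phi_*$ — and so $s$ would have to increase there while its limit is exactly $1/a$. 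I need to rule this out carefully. For $s\le 1/a$, $N\ge\frac{\dm(m-1)}{a}(1+\cos\phi)>0$, so $s$ is strictly increasing on $[\phi_*,\pi)$. Then $s(\phi)>s(\phi_*)$ near $\pi$, which gives $\lim s>s(\phi_*)$, i.e.\ $1/a>s(\phi_*)$. This is consistent, so I need more. The plan is to compare with the asymptotics near $\pi$: expand $N$ at $(s,\phi)=(1/a,\pi)$ to find the sign of $s-1/a$ as $\phi\to\pi$. If that fails, I will show $s>1/a$ directly by a Chebyshev-type inequality on the integrals: the weight $(\cos\theta-\cos\phi)^{\frac1{m-1}}$ dominates, in monotone-likelihood order, the $\phi=\pi$ weight restricted to $[0,\phi]$.
\end{itemize}

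\textbf{Monotonicity.} $N(\cdot,\phi)$ is a downward parabola in $s$, and the bounds above constrain where its roots sit. Let $r_+(\phi)$ be its larger root. At $s=1$, $N=\dm(m-1)(\cos\phi-1)<0$, so $r_+(\phi)<1$. The claim "$s$ decreasing" is equivalent to $s(\phi)\ge r_+(\phi)$. I will argue this by the same invariance idea: along the curve $s=r_+(\phi)$, compute $r_+'(\phi)$ and show that the trajectory cannot cross below it, using the boundary value $s(0+)=1=r_+(0)$.

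The main obstacle is this last step, together with the lower barrier near $\pi$, where the nullcline and the solution converge to the same endpoint values. If the nullcline comparison proves messy, I will fall back on proving $s'<0$ directly from \eqref{eqn:sprimefirst}. By the Cauchy–Schwarz/Chebyshev inequality, the numerator there is $\frac{\sin\phi}{m-1}\cdot\frac{\int w\cos\theta\,\int v-\int v\cos\theta\,\int w}{(\int v)^2}$, with $w=(\cos\theta-\cos\phi)^{\frac{2-m}{m-1}}\sin^{\dm-1}\theta$ and $v=(\cos\theta-\cos\phi)w$. Since $v/w$ is increasing in $\cos\theta$, the $v$-average of $\cos\theta$ exceeds the $w$-average, which gives $s'<0$ immediately.
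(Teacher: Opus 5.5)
Your limit at $0$ is the paper's argument. Your fallback for monotonicity is also exactly the paper's proof: the paper writes the sign of $s'$ from \eqref{eqn:sprimefirst} with the measure $(\cos\theta-\cos\phi)^{\frac{2-m}{m-1}}\sin^{\dm-1}\theta\,\d\theta$ and applies Cauchy--Schwarz, which is your comparison of the $w$- and $v$-averages of $\cos\theta$. The paper then reads off \eqref{srange} at once from strict monotonicity and the two endpoint limits. You could have done the same, since your fallback gives $s'<0$. Two other steps of your proposal, however, fail as written.

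\emph{Limit at $\pi$.} Your identity $J=\frac{m}{(m-1)(\dm-1)}\int_0^\pi(1+\cos\theta)^{\frac1{m-1}}\sin^{\dm+1}\theta\,\d\theta$ is false. For $\dm=2$, $m=3$ the left side is $\frac{8\sqrt{2}}{5}$ and the right side is $\frac{48\sqrt{2}}{35}$. The system also does not close on $I_0,I_1,J$: the identity $(1+\cos\theta)^{\frac1{m-1}}\sin^2\theta=(1-\cos\theta)(1+\cos\theta)^{\frac m{m-1}}$ introduces the new moment $\int_0^\pi(1+\cos\theta)^{\frac m{m-1}}\sin^{\dm-1}\theta\cos\theta\,\d\theta$. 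The correct one-line version is to integrate
\[
\frac{\d}{\d\theta}\Bigl[(1+\cos\theta)^{\frac1{m-1}}\sin^{\dm}\theta\Bigr]=(1+\cos\theta)^{\frac1{m-1}}\sin^{\dm-1}\theta\Bigl[\Bigl(\frac1{m-1}+\dm\Bigr)\cos\theta-\frac1{m-1}\Bigr]
\]
over $[0,\pi]$. The boundary terms vanish, which gives $I_1/I_0=\frac{1/(m-1)}{1/(m-1)+\dm}=\frac{1}{1+\dm(m-1)}$. This is more elementary than the paper's half-angle substitution and Beta/Gamma evaluation. Your alternative through \eqref{eqn:cs-A} can also work, but only after you show that $\kappa A(\rho)\sin^2\phi\to0$ as $\phi\to\pi$, which you do not address.

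\emph{Lower bound.} Write $a=1+\dm(m-1)$. Your inference ``$s(\phi_*)\le 1/a$ implies $s\le 1/a$ on $[\phi_*,\pi)$'' runs in the wrong direction. The level $1/a$ can only be crossed upward as $\phi$ increases, so what actually follows is $s\le 1/a$ on $(0,\phi_*]$, and this contradicts $s(0+)=1$. With that correction your barrier argument is complete, is independent of monotonicity, and needs no asymptotics at $\pi$. The nullcline route to monotonicity is, as you suspected, genuinely delicate, because $s$ meets the nullcline at both ends: $r_+(0)=1=s(0+)$ and $r_+(\pi)=1/a=s(\pi-)$. Drop it and use the Chebyshev argument.
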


\begin{proof}
\noindent {(\em The value of $s(\phi)$ near $0+$)} From the definition of $s$, we get
\begin{align*}
0\leq|1-s(\phi)|&=\left|
\frac{ \int_0^\phi
(\cos\theta-\cos\phi)^{\frac{1}{m-1}}
\sin^{d-1}\theta (1-\cos\theta)\d\theta}
{\int_0^\phi
(\cos\theta-\cos\phi)^{\frac{1}{m-1}}
\sin^{d-1}\theta\d\theta}\right|\\
&\leq 
\frac{ \int_0^\phi
(\cos\theta-\cos\phi)^{\frac{1}{m-1}}
\sin^{d-1}\theta (1-\cos\phi)\d\theta}
{\int_0^\phi
(\cos\theta-\cos\phi)^{\frac{1}{m-1}}
\sin^{d-1}\theta\d\theta}\\
&=1-\cos\phi.
\end{align*}
Therefore, we get
\[
\lim_{\phi\to0+}|1-s(\phi)|=0,
\]
which yields the first part of \eqref{slimvalue}.
\medskip

\noindent {(\em The value of $s(\phi)$ near $\pi-$)} Again, from the definition of $s$, we get
\begin{align*}
\lim_{\phi\to\pi}s(\phi)&=\frac{\int_0^\pi (\cos\theta+1)^{\frac{1}{m-1}}\sin^{\dm-1}\theta \cos\theta\d\theta}{\int_0^\pi (\cos\theta+1)^{\frac{1}{m-1}}\sin^{\dm-1}\theta\d\theta}\\
&=\frac{\int_0^\pi (\cos\theta+1)^{\frac{1}{m-1}+1}\sin^{\dm-1}\theta \d\theta}{\int_0^\pi (\cos\theta+1)^{\frac{1}{m-1}}\sin^{\dm-1}\theta\d\theta}-1.
\end{align*}
We rewrite the above form using
\[
\cos\theta+1=2\cos^2\left(\frac{\theta}{2}\right) \qquad\text{ and }\qquad \sin\theta=2\cos\left(\frac{\theta}{2}\right)\sin\left(\frac{\theta}{2}\right),
\]
to get
\begin{equation}
\label{eqn:limphipi}
\lim_{\phi\to\pi}s(\phi)=2\times\frac{\int_0^\pi \cos^{\frac{2}{m-1}+2+\dm-1}\left(\frac{\theta}{2}\right)\sin^{\dm-1}\left(\frac{\theta}{2}\right)\d\theta}{\int_0^\pi \cos^{\frac{2}{m-1}+\dm-1}\left(\frac{\theta}{2}\right)\sin^{\dm-1}\left(\frac{\theta}{2}\right)\d\theta}-1.
\end{equation}

Using the following formula:
\[
\int_0^\pi
\sin^a\left(\frac{\theta}{2}\right)
\cos^b\left(\frac{\theta}{2}\right)\,d\theta
=
\frac{
\Gamma\left(\frac{a+1}{2}\right)
\Gamma\left(\frac{b+1}{2}\right)
}{
\Gamma\left(\frac{a+b+2}{2}\right)}, \qquad\forall a, b>-1,
\] 
and properties of the Gamma function, we can further simplify the two integrals in \eqref{eqn:limphipi} as follows:
\begin{align*}
\int_0^\pi \cos^{\frac{2}{m-1}+2+\dm-1}\left(\frac{\theta}{2}\right)\sin^{\dm-1}\left(\frac{\theta}{2}\right)\d\theta &=\frac{\Gamma\left(\frac{1}{m-1}+1+\frac{\dm}{2}\right)\Gamma\left(\frac{\dm}{2}\right)}{\Gamma\left(\frac{1}{m-1}+1+\dm\right)}\\
&=\frac{\frac{1}{m-1}+\frac{\dm}{2}}{\frac{1}{m-1}+\dm}\times \frac{\Gamma\left(\frac{1}{m-1}+\frac{\dm}{2}\right)\Gamma\left(\frac{\dm}{2}\right)}{\Gamma\left(\frac{1}{m-1}+\dm\right)},
\end{align*}
and
\[
\int_0^\pi \cos^{\frac{2}{m-1}+\dm-1}\left(\frac{\theta}{2}\right)\sin^{\dm-1}\left(\frac{\theta}{2}\right)\d\theta=\frac{\Gamma\left(\frac{1}{m-1}+\frac{\dm}{2}\right)\Gamma\left(\frac{\dm}{2}\right)}{\Gamma\left(\frac{1}{m-1}+\dm\right)}.
\]
Finally, we have
\begin{align*}
\lim_{\phi\to\pi}s(\phi)&=2\times\frac{\frac{1}{m-1}+\frac{\dm}{2}}{\frac{1}{m-1}+\dm}-1=\frac{1}{1+\dm(m-1)}.
\end{align*}
\medskip

\noindent {(\em Monotonicity of $s$)} From \eqref{eqn:sprimefirst}, we have 
\begin{align*}
\mathrm{sgn}(s'(\phi))&=\mathrm{sgn}\bigg(
\left(\int_0^\phi(\cos\theta-\cos\phi)^{\frac{2-m}{m-1}}\sin^{\dm-1}\theta\cos\theta\d\theta\right)\left(\int_0^\phi(\cos\theta-\cos\phi)^{\frac{1}{m-1}}\sin^{\dm-1}\theta\d\theta\right)\\
&\hspace{1cm}-\left(\int_0^\phi(\cos\theta-\cos\phi)^{\frac{2-m}{m-1}}\sin^{\dm-1}\theta\d\theta\right)\left(\int_0^\phi(\cos\theta-\cos\phi)^{\frac{1}{m-1}}\sin^{\dm-1}\theta\cos\theta\d\theta\right)
\bigg).
\end{align*}
Let $\d\mu(\theta)=(\cos\theta-\cos\phi)^{\frac{2-m}{m-1}}\sin^{\dm-1}\theta \, \d\theta$ on $\theta\in[0, \phi]$, then we get
\begin{align*}
\mathrm{sgn}(s'(\phi))&=\mathrm{sgn}\bigg(
\left(\int\cos\theta\d\mu(\theta)\right)\left(\int(\cos\theta-\cos\phi)\d\mu(\theta)\right)\\
&\hspace{2cm}-\left(\int\d\mu(\theta)\right)\left(\int(\cos\theta-\cos\phi)\cos\theta\d\mu(\theta)\right)
\bigg)\\
&=\mathrm{sgn}\left(-\int\cos^2\theta\d\mu(\theta)\int\d\mu(\theta)+\left(\int\cos\theta\d\mu(\theta)\right)^2\right)\\
&=-1,
\end{align*}
where the last equal sign can be justified by Cauchy-Schwarz inequality. Therefore, we conclude that $s(\phi)$ is decreasing.
\end{proof}

\begin{lemma}\label{lem:negativeprime}
Let $m>2$ and $G(\phi)$ be defined by \eqref{eqn:G}. Assume that $G(\phi_0)=0$ for some $\phi_0\in(0, \pi)$. Then, $G'(\phi_0)<0$.
\end{lemma}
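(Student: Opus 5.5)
The plan is a direct computation. I will use the identity for $s'$ from Lemma \ref{appendix:lemma1}, together with the relation $G(\phi_0)=0$, to eliminate every trigonometric term that does not factor. Throughout, write $c=\cos\phi$, $s=s(\phi)$ and $K=1+\dm(m-1)$. Then
\[
G(\phi)=\sin^2\phi+\bigl(c+(m-2)s\bigr)(Ks+c).
\]
The factor $Ks+c=s+\cos\phi+\dm(m-1)s$ is exactly the combination appearing in \eqref{eqn:cs-A}. Since $A(\rho)>0$ and $\sin\phi>0$, we get $Ks+c>0$ on $(0,\pi)$. Also $s>0$ by \eqref{srange}.

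\emph{Step 1: rewrite $s'$.} The numerator in \eqref{eqn:sprime} is $1+(K-1)sc-Ks^2$. Using $1=\sin^2\phi+c^2$, it factors as
\[
\sin^2\phi+(c-s)(Ks+c).
\]
At a zero $\phi_0$ of $G$ we have $\sin^2\phi_0=-(c+(m-2)s)(Ks+c)$. The numerator then collapses to $-(m-1)s(Ks+c)$, so
\[
s'(\phi_0)=-\frac{s(Ks+c)}{\sin\phi_0}.
\]

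\emph{Step 2: differentiate $G$.} By the product rule,
\[
G'=2\sin\phi\, c+\bigl(-\sin\phi+(m-2)s'\bigr)(Ks+c)+\bigl(c+(m-2)s\bigr)(Ks'-\sin\phi).
\]
Collect the terms without $s'$: they equal $\sin\phi\,\bigl(2c-Ks-c-c-(m-2)s\bigr)=-(K+m-2)s\sin\phi$. The $s'$ terms are $s'\bigl[(m-2)(Ks+c)+K(c+(m-2)s)\bigr]$. Next, insert the expression for $s'(\phi_0)$ from Step 1, multiply by $\sin\phi_0$, and substitute $\sin^2\phi_0$ once more using $G(\phi_0)=0$. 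This gives
\[
\sin\phi_0\,G'(\phi_0)=-s(Ks+c)\Bigl[-(K+m-2)\bigl(c+(m-2)s\bigr)+(m-2)(Ks+c)+Kc+K(m-2)s\Bigr].
\]
The coefficient of $c$ inside the bracket cancels. The bracket reduces to $(m-2)(K-m+2)\,s$, hence
\[
\sin\phi_0\,G'(\phi_0)=-(m-2)(K-m+2)\,s^2\,(Ks+c).
\]

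\emph{Step 3: sign.} Here $K-m+2=(\dm-1)(m-1)+2>0$, $m>2$, $s>0$ and $Ks+c>0$. Together with $\sin\phi_0>0$, this gives $G'(\phi_0)<0$. The main obstacle is purely algebraic: recognizing the factorization of the numerator of $s'$ in Step 1, which is what makes the constraint $G(\phi_0)=0$ usable. After that step the bracket simplification is routine. The only other input needed is the positivity of $Ks+c$, which comes from $A(\rho)>0$ via \eqref{eqn:cs-A}.
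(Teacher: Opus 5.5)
Your proof is correct. It follows the same overall strategy as the paper: use the formula for $s'$ from Lemma~\ref{appendix:lemma1} together with $G(\phi_0)=0$ to compute $G'(\phi_0)$ in closed form, then read off its sign. The elimination, however, is organized differently, and yours is cleaner.

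The paper expands $G$ as $1+(m-1)(\dm+1)s\cos\phi+(m-2)(1+\dm(m-1))s^2$ and uses $G(\phi_0)=0$ to eliminate $\cos\phi_0$. It then rewrites $1/\sin\phi_0$ as $\sin\phi_0/(1-\cos^2\phi_0)$ via a four-factor factorization, arriving at
\[
G'(\phi_0)=-\frac{(m-1)(\dm+1)(m-2)\bigl((m-1)(\dm-1)+2\bigr)s^3\sin\phi_0}{1-(m-2)^2s^2}.
\]
The sign then needs a separate argument that $s(\phi_0)<\frac{1}{m-2}$. The paper gets this from $\cos\phi_0\geq -1$ (a quadratic inequality in $s$) combined with the lower bound in \eqref{srange}.

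You keep $G$ in factored form and eliminate $\sin^2\phi_0$ instead. This makes the numerator of $s'$ collapse to $-(m-1)s(Ks+c)$ and yields the sign-transparent identity $\sin\phi_0\,G'(\phi_0)=-(m-2)\bigl((\dm-1)(m-1)+2\bigr)s^2(Ks+c)$.

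The two formulas agree. At a zero of $G$ one has $1-(m-2)^2s^2=-(m-1)(\dm+1)s\bigl(c+(m-2)s\bigr)$. Moreover, $-(c+(m-2)s)(Ks+c)=\sin^2\phi_0>0$. So the paper's condition $1-(m-2)^2s^2>0$ and your condition $Ks+c>0$ are the same fact in disguise.

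One small simplification: you do not need \eqref{eqn:cs-A}, which implicitly treats each $\phi$ as the support angle of an equilibrium with $\kappa=F(\phi)^{-1}$. The lower bound in \eqref{srange} gives directly $Ks+c>1+\cos\phi>0$ on $(0,\pi)$. This uses exactly the same inputs as the paper's sign argument, with no quadratic inequality.
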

\begin{proof}
By rearranging its r.h.s., we can write \eqref{eqn:G} as
\begin{equation}
\label{eqn:G-mod}
G(\phi) = 1+(m-1)(\dm+1)\cos\phi s(\phi) 
+(m-2)(1+\dm(m-1))s(\phi)^2.
\end{equation}
Therefore, a root $\phi_0$ of $G$ satisfies
\[
1+(m-1)(\dm+1)\cos\phi_0 s(\phi_0) 
+(m-2)(1+\dm(m-1))s(\phi_0)^2 = 0,
\]
which yields
\begin{align}\label{cosphi0}
\cos\phi_0=-\frac{1+(m-2)(1+\dm(m-1))s(\phi_0)^2}{(m-1)(\dm+1)s(\phi_0)}.
\end{align}

From \eqref{eqn:sprime} and \eqref{cosphi0}, we calculate $s'(\phi_0)$ as follows:
\begin{align}
\begin{aligned}\label{sprime0}
s'(\phi_0)&=\frac{1+\dm(m-1)\cos\phi_0 s(\phi_0)-(1+\dm(m-1))s(\phi_0)^2}{(m-1)\sin\phi_0}\\
&=\frac{1}{(m-1)\sin\phi_0}\left(1-\frac{\dm}{\dm+1}\left(1+(m-2)(1+\dm(m-1))s(\phi_0)^2\right)-(1+\dm(m-1))s(\phi_0)^2\right)\\
&=\frac{1}{(m-1)\sin\phi_0}\left(\frac{1}{\dm+1}-(1+\dm(m-1))\left(\frac{\dm(m-2)}{\dm+1}+1\right) s(\phi_0)^2 \right) \\
&=\frac{1}{(m-1)\sin\phi_0}\left(\frac{1-(1+\dm(m-1))^2s(\phi_0)^2}{\dm+1}\right)\\
&=\frac{1-(1+\dm(m-1))^2s(\phi_0)^2}{(\dm+1)(m-1)\sin\phi_0}.
\end{aligned}
\end{align}

By \eqref{eqn:G-mod}, we calculate the derivative of $G$ at $\phi_0$ to get
\begin{equation}
\label{eqn:Gp-phi0}
\begin{aligned}
G'(\phi_0)&=(m-1)(\dm+1)\left(-\sin\phi_0 s(\phi_0)+\cos\phi_0 s'(\phi_0)\right)\\
&\hspace{3cm}+2(m-2)(1+\dm(m-1))s(\phi_0)s'(\phi_0)\\
&=-(m-1)(\dm+1)\sin\phi_0 s(\phi_0)\\
&\hspace{3cm}+\left((m-1)(\dm+1)\cos\phi_0+2(m-2)(1+\dm(m-1))s(\phi_0)\right)s'(\phi_0)\\
&:=\mathcal{I}_1+\mathcal{I}_2.
\end{aligned}
\end{equation}
In the second term $\mathcal{I}_2$, remove $\cos\phi_0$ and $s'(\phi_0)$ by using \eqref{cosphi0} and \eqref{sprime0}, to find
\begin{align*}
\mathcal{I}_2&=\left(-\frac{1+(m-2)(1+\dm(m-1))s(\phi_0)^2}{s(\phi_0)}+2(m-2)(1+\dm(m-1))s(\phi_0)\right) \\
&\qquad \times\left(\frac{1-(1+\dm(m-1))^2s(\phi_0)^2}{(\dm+1)(m-1)\sin\phi_0}\right)\\[5pt]
&=\frac{
\left(-1+(m-2)(1+\dm(m-1))s(\phi_0)^2\right)\left(1-(1+\dm(m-1))^2s(\phi_0)^2\right)
}{(\dm+1)(m-1)s(\phi_0)}\times\frac{1}{\sin\phi_0}.
\end{align*}

On the other hand, we can rewrite $\frac{1}{\sin\phi_0}$ using \eqref{cosphi0} as follows:
\begin{align*}
\frac{1}{\sin\phi_0}&=\frac{\sin\phi_0}{(1-\cos\phi_0)(1+\cos\phi_0)}\\
&=\sin\phi_0\left(\frac{(m-1)(\dm+1)s(\phi_0)}{
(m-1)(\dm+1)s(\phi_0)+1+(m-2)(1+\dm(m-1))s(\phi_0)^2
}\right)\\[2pt]
&\hspace{2cm}\times\left(\frac{(m-1)(\dm+1)s(\phi_0)}{
(m-1)(\dm+1)s(\phi_0)-1-(m-2)(1+\dm(m-1))s(\phi_0)^2
}\right)\\[2pt]
&=-\frac{(m-1)^2(\dm+1)^2s(\phi_0)^2\sin\phi_0}{\big((m-2)s(\phi_0)+1\big)\big((1+\dm(m-1))s(\phi_0)+1\big)\big((m-2)s(\phi_0)-1\big)\big((1+\dm(m-1))s(\phi_0)-1\big)}\\
&=-\frac{(m-1)^2(\dm+1)^2s(\phi_0)^2\sin\phi_0}{\big(1-(m-2)^2s(\phi_0)^2\big)\big(1-(1+\dm(m-1))^2s(\phi_0)^2\big)}.
\end{align*}
Combining the two calculations, we obtain
\[
\mathcal{I}_2=-\frac{(m-1)(\dm+1)s(\phi_0)\sin\phi_0\big(-1+(m-2)(1+\dm(m-1))s(\phi_0)^2\big)}{1-(m-2)^2s(\phi_0)^2}.
\]

Next, we calculate the sum of $\mathcal{I}_1$ and $\mathcal{I}_2$ as follows:
\begin{align}
\begin{aligned}\label{eqn:Gprime}
G'(\phi_0)&=\mathcal{I}_1+\mathcal{I}_2\\
&=-(m-1)(\dm+1)s(\phi_0)\sin\phi_0-\frac{(m-1)(\dm+1)s(\phi_0)\sin\phi_0\big(-1+(m-2)(1+\dm(m-1))s(\phi_0)^2\big)}{1-(m-2)^2s(\phi_0)^2}\\[3pt]
&=-(m-1)(\dm+1)s(\phi_0)\sin\phi_0\left(
1+\frac{-1+(m-2)(1+\dm(m-1))s(\phi_0)^2}{1-(m-2)^2s(\phi_0)^2}
\right)\\[3pt]
&=-\frac{(m-1)(\dm+1)s(\phi_0)\sin\phi_0}{1-(m-2)^2s(\phi_0)^2}\left(
1-(m-2)^2s(\phi_0)^2-1+(m-2)(1+\dm(m-1))s(\phi_0)^2
\right)\\[3pt]
&=-\frac{(m-1)(\dm+1)(m-2)\sin\phi_0 s(\phi_0)^3\left((m-1)(\dm-1)+2\right)}{1-(m-2)^2s(\phi_0)^2}.
\end{aligned}
\end{align}
From \eqref{cosphi0}, we have
\[
-\frac{1+(m-2)(1+\dm(m-1))s(\phi_0)^2}{(m-1)(\dm+1)s(\phi_0)}=\cos\phi_0\geq -1,
\]
which yields
\begin{align}
\begin{aligned}\label{phi0ineq}
0&\geq 1-(m-1)(\dm+1)s(\phi_0)
+(m-2)(1+\dm(m-1))s(\phi_0)^2\\
&=(m-2)(1+\dm(m-1))\left(
s(\phi_0)-\frac{1}{m-2}
\right)\left(
s(\phi_0)-\frac{1}{1+\dm(m-1)}
\right).
\end{aligned}
\end{align}

Finally, by \eqref{srange} we know
\[
s(\phi)>\frac{1}{1+\dm(m-1)},\qquad\forall \phi\in(0, \pi),
\]
which we combine with \eqref{phi0ineq} to get
\[
s(\phi_0)<\frac{1}{m-2}.
\]
Therefore, 
\[
1-(m-2)^2s(\phi_0)^2>0,
\]
and it implies that the denominator of the expression of $G'(\phi_0)$ in  \eqref{eqn:Gprime} is positive. Furthermore, the numerator of the expression is also positive since $m>2$, $\sin\phi_0>0$, and $s(\phi_0)>0$, and we can conclude that
\[
G'(\phi_0)<0.
\]
\end{proof}

\begin{lemma}\label{lem:Gnearpi}
Let $m>2$. The function $G(\phi)$ given by \eqref{eqn:G} satisfies
\[
\lim_{\phi\to\pi-}\frac{G(\phi)}{\sin^2\phi}<0.
\]
\end{lemma}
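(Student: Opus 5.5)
The plan is a second-order expansion of $G$ at $\phi=\pi$. Set $K:=1+\dm(m-1)$ and $s_\pi:=1/K=\lim_{\phi\to\pi-}s(\phi)$, which is known from Lemma \ref{lem:svd}. Write $\phi=\pi-t$, so that $\cos\phi=-1+\tfrac{t^2}{2}+O(t^4)$ and $\sin^2\phi=t^2+O(t^4)$.

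First I will check that $G$ vanishes at $\pi$. Using the form \eqref{eqn:G-mod}, the limit of $G$ as $\phi\to\pi-$ is
\[
1-(m-1)(\dm+1)s_\pi+(m-2)Ks_\pi^2=\frac{K-(m-1)(\dm+1)+m-2}{K}=0 .
\]
So the claimed limit is a genuine second-order quantity, and I need the behaviour of $s$ near $\pi$ to order $t^2$.

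Next I will obtain that expansion of $s$ from the ODE \eqref{eqn:sprime}. Write $s=s_\pi+\sigma(t)$ and let $N(s,\phi)=1+\dm(m-1)s\cos\phi-Ks^2$ denote its numerator. One has $N(s_\pi,\pi)=0$ and $\partial_sN(s_\pi,\pi)=-\dm(m-1)-2$. Linearizing then gives
\[
-(m-1)\,t\,\sigma'(t)=-(\dm(m-1)+2)\,\sigma+\tfrac{1}{2}\dm(m-1)s_\pi t^2+o(t^2)+O(\sigma^2+t^2|\sigma|).
\]
This is a first-order linear ODE in $t$ with a regular singular point. Its homogeneous solutions behave like $t^{\dm+2/(m-1)}$, and since the exponent $\dm+\frac{2}{m-1}$ exceeds $2$, they are negligible at order $t^2$. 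Inserting $\sigma\sim ct^2$ and solving $-2(m-1)c=-(\dm(m-1)+2)c+\tfrac12\dm(m-1)s_\pi$ gives
\[
c=\frac{\dm(m-1)s_\pi}{2\big((\dm-2)(m-1)+2\big)}.
\]

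The main obstacle is making $\sigma(t)/t^2\to c$ rigorous rather than formal. I would first show $\sigma=O(t^2)$. One route is to solve the linear ODE with integrating factor $t^{-(\dm+2/(m-1))}$, using that $\sigma\to0$ to fix the constant, and to bound the forcing term. An alternative is to bound $\sigma$ directly from the integral formula \eqref{def:s}, arguing as in the proof of the limit in Lemma \ref{lem:svd}. With $\sigma=O(t^2)$ in hand, the nonlinear remainders become $o(t^2)$. The variation-of-constants formula then yields $\sigma(t)/t^2\to c$, because the integrating-factor exponent is strictly larger than $2$.

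Finally I will expand $G$ itself. The term linear in $\sigma$ has coefficient $-(m-1)(\dm+1)+2(m-2)Ks_\pi=-\big((m-1)(\dm-1)+2\big)$, and the explicit $t^2$ contribution from $\cos\phi$ is $\tfrac12(m-1)(\dm+1)s_\pi t^2$. Hence
\[
\lim_{\phi\to\pi-}\frac{G(\phi)}{\sin^2\phi}=\frac{(m-1)s_\pi}{2}\Big[(\dm+1)-\frac{\dm\big((\dm-1)(m-1)+2\big)}{(\dm-2)(m-1)+2}\Big].
\]
Put $a=m-1$ and multiply the bracket by the positive denominator $(\dm-2)a+2$. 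The result is $(\dm+1)((\dm-2)a+2)-\dm((\dm-1)a+2)=2(1-a)=2(2-m)$, which is negative because $m>2$. This proves the lemma.
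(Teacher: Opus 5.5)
Your proof is correct. Your value $\frac{(m-1)s_\pi}{2}\cdot\frac{2(2-m)}{(\dm-2)(m-1)+2}$ is exactly the paper's $-\frac{(m-1)(m-2)}{(1+\dm(m-1))((\dm-2)(m-1)+2)}$, and your $c$ is $-\tfrac12$ times the paper's limit \eqref{eqn:sposinphi}.

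The two proofs differ in how this second-order information is extracted from \eqref{eqn:sprime}. The paper applies l'H\^opital's rule to $s'(\phi)/\sin\phi=N/((m-1)\sin^2\phi)$. This gives a relation in which $L=\lim_{\phi\to\pi-}s'(\phi)/\sin\phi$ appears on both sides, namely $L=\frac{\dm}{2K}+\gamma L$ with $\gamma=\frac{\dm(m-1)+2}{2(m-1)}$. The paper solves this for $L$ and then applies l'H\^opital once more to $G/\sin^2\phi$. That is shorter, but it presupposes that $L$ exists. Since $\gamma>1$, the $\liminf/\limsup$ form of l'H\^opital only yields $\liminf\le L^*\le\limsup$ (with $L^*$ the solution of the relation), so it cannot supply existence. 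Your regular-singular-point analysis hinges on the homogeneous exponent $\beta=\dm+\frac{2}{m-1}$ exceeding $2$, and that is precisely what proves the limit exists. Your route is therefore the more complete one, and the direct Taylor expansion of $G$ replaces the second l'H\^opital.

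Three points on carrying out the rigorous step.
\begin{itemize}
\item Integrate from a fixed $t_0$ down to $t$, not from $0$, since $t^{-\beta}\sigma$ is unbounded at $0$; so $\sigma\to0$ does not ``fix a constant''.
\item Keep $\sin t$ rather than $t$ in front of $\sigma'$. The replacement produces an error proportional to $\sigma'$, which is not among your listed remainders.
\item Absorb the nonlinear terms into the coefficient. Write the exact equation as $\sigma'=a(t)\sigma-f(t)$ with $a(t)=\frac{\dm(m-1)+2+K\sigma-\dm(m-1)(1-\cos t)}{(m-1)\sin t}$ and $f(t)=\frac{\dm s_\pi(1-\cos t)}{\sin t}$. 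Then $\sigma\to0$ (Lemma \ref{lem:svd}) alone gives $t\,a(t)\to\beta$.
\end{itemize}
With $\mu(t)=\exp\int_t^{t_0}a$ you get $\sigma(t)/t^2=\big(\sigma(t_0)+\int_t^{t_0}\mu f\big)/(t^2\mu(t))$. The denominator tends to $\infty$ because $\beta>2$. One $\infty/\infty$ l'H\^opital then gives $\sigma/t^2\to\frac{\dm s_\pi}{2(\beta-2)}=c$, without needing a separate a priori bound $\sigma=O(t^2)$.
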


\begin{proof}
To investigate the given limit, we investigate the limit of $\frac{s'(\phi)}{\sin\phi}$ first. Note that by \eqref{eqn:sprime} and \eqref{slimvalue}, we have
\[
\lim_{\phi\to\pi-}s'(\phi)=0.
\]
By \eqref{eqn:sprime} and l'H\^{o}pital's rule, we get
\begin{align*}
\lim_{\phi\to\pi^-}\frac{s'(\phi)}{\sin\phi}
&=\lim_{\phi\to\pi^-}\frac{1+\dm(m-1) s(\phi) \cos\phi -(1+\dm(m-1))s(\phi)^2}{
(m-1)\sin^2\phi}\\
&=\lim_{\phi\to\pi^-}
\frac{ -\dm(m-1)s(\phi) \sin\phi + \dm(m-1) s'(\phi) \cos\phi -2(1+\dm(m-1))s(\phi)s'(\phi)}{2(m-1)\sin\phi\cos\phi}\\
&=\lim_{\phi\to\pi^-}\frac{-\dm(m-1)s(\phi)+\left(
\dm(m-1)\cos\phi-2(1+\dm(m-1))s(\phi)\right)\frac{s'(\phi)}{\sin\phi}}{
2(m-1)\cos\phi}.
\end{align*}
Since $\lim_{\phi\to\pi-}s(\phi)=\frac{1}{1+\dm(m-1)}$ (cf., \eqref{slimvalue}), we get
\begin{align*}
\lim_{\phi\to\pi^-}\frac{s'(\phi)}{\sin\phi}=\frac{-\frac{\dm(m-1)}{1+\dm(m-1)}+\left(-\dm(m-1)-2\right)\lim_{\phi\to\pi-}\frac{s'(\phi)}{\sin\phi}}{-2(m-1)},
\end{align*}
from which we can find 
\begin{equation}
\label{eqn:sposinphi}
\lim_{\phi\to\pi-}\frac{s'(\phi)}{\sin\phi}=-\frac{\dm(m-1)}{\left(1+\dm(m-1)\right)\left((\dm-2)(m-1)+2\right)}.
\end{equation}

Now, by l'H\^{o}pital's rule,
\[
\lim_{\phi\to\pi^-}\frac{G(\phi)}{\sin^2\phi}=\lim_{\phi\to\pi^-}
\frac{G'(\phi)}{2\sin\phi\cos\phi}.
\]
By \eqref{eqn:G-mod}, we can compute the derivative of $G$ as
\[
G'(\phi)=(m-1)(\dm+1)\left(-\sin\phi s(\phi)+\cos\phi s'(\phi)\right)
+2(m-2)(1+d(m-1))s(\phi)s'(\phi).
\]
Therefore, we get
\begin{align*}
\frac{G'(\phi)}{\sin\phi}&=-(m-1)(\dm+1)s(\phi)\\
&\qquad+\bigg((m-1)(\dm+1)\cos\phi+2(m-2)(1+\dm(m-1))s(\phi)\bigg)\frac{s'(\phi)}{\sin\phi}.
\end{align*}
From the above, together with \eqref{slimvalue} and \eqref{eqn:sposinphi}, we find
\begin{align*}
\lim_{\phi\to\pi-}\frac{G'(\phi)}{\sin\phi}&=-(m-1)(\dm+1)\lim_{\phi\to\pi-}s(\phi)\\
&\qquad+\lim_{\phi\to\pi-}\bigg((m-1)(\dm+1)\cos\phi+2(m-2)(1+\dm(m-1))s(\phi)\bigg)\lim_{\phi\to\pi-}\frac{s'(\phi)}{\sin\phi}\\
&=-\frac{(m-1)(\dm+1)}{1+\dm(m-1)}\\
&\qquad+\left(-(m-1)(\dm+1)+2(m-2)\right)\times \frac{-\dm(m-1)}{\left(1+\dm(m-1)\right)\left((\dm-2)(m-1)+2\right)}\\
&=\frac{2(m-1)(m-2)}{(1+\dm(m-1))((\dm-2)(m-1)+2)}.
\end{align*}
Finally, we get
\[
\lim_{\phi\to\pi^-}\frac{G(\phi)}{\sin^2\phi}=\lim_{\phi\to\pi^-}
\frac{G'(\phi)}{2\sin\phi\cos\phi}=-\frac{(m-1)(m-2)}{(1+\dm(m-1))((\dm-2)(m-1)+2)}<0
\]
\end{proof}


\bibliographystyle{abbrv}
\def\url#1{}
\bibliography{lit.bib}

\end{document}